\newcommand{\A}{\mathcal{A}}
\newcommand{\B}{\mathcal{B}}
\newcommand{\nn}{||.||}
\newcommand{\R}{\mathbb{R}}
\tikzstyle{edge}=[very thick]
\definecolor{bostonuniversityred}{rgb}{0.8, 0.0, 0.0}
\definecolor{arsenic}{rgb}{0.23, 0.27, 0.29}
\tikzstyle{diredge}=[postaction={decorate,decoration={markings,
\newcommand{\defPt}[3]{
	\def \pt {(#1, #2)}
	\coordinate [at = \pt, name = #3];
}
\tikzset{
   conn/.pic={
     \defPt{0.2}{-0.5}{q0}
     \defPt{-1}{-1.5}{q5}
    \defPt{1}{1.2}{q1}
    \defPt{1}{2.7}{q6}
    \defPt{1.25}{-1.2}{q2}
    \defPt{2.5}{0.6}{q3}
    \defPt{2.5}{-0.6}{q4}
  
        \draw[line width=1 pt] (q0) -- (q1) -- (q3) -- (q4);
        \draw[line width=1 pt] (q2) -- (q3);
        \draw[line width=1 pt] (q0) -- (q5);
        \draw[line width=1 pt] (q1) -- (q6);
  }
}
\newcommand{\fitellipsis}[3] 
{\draw []let \p1=(#1), \p2=(#2), \n1={atan2(\y2-\y1,\x2-\x1)}, \n2={veclen(\y2-\y1,\x2-\x1)}
    in ($ (\p1)!0.5!(\p2) $) ellipse [ x radius=\n2/2+0.3cm+#3cm, y radius=#3cm, rotate=\n1];
}
\newcommand{\fitellipsiss}[3] 
{\draw [fill=white]let \p1=(#1), \p2=(#2), \n1={atan2(\y2-\y1,\x2-\x1)}, \n2={veclen(\y2-\y1,\x2-\x1)}
    in ($ (\p1)!0.5!(\p2) $) ellipse [ x radius=\n2/2+#3cm, y radius=#3cm, rotate=\n1];
}
\newcommand{\fitellipsisss}[3] 
{\draw []let \p1=(#1), \p2=(#2), \n1={atan2(\y2-\y1,\x2-\x1)}, \n2={veclen(\y2-\y1,\x2-\x1)}
    in ($ (\p1)!0.5!(\p2) $) ellipse [ x radius=\n2/2+#3cm, y radius=#3cm, rotate=\n1];
}
\renewcommand{\Bbb}{\mathbb}
\theoremstyle{plain}
\newtheorem*{thm*}{Theorem}
\newtheorem{thm}{Theorem}[section]
\Crefname{thm}{Theorem}{Theorems}
\newtheorem*{lem*}{Lemma}
\newtheorem{lem}[thm]{Lemma}
\Crefname{lem}{Lemma}{Lemmas}
\newcounter{claimcount}
\newenvironment{claim}{\refstepcounter{claimcount}\textbf{Claim \arabic{claimcount}:}}{}
\Crefname{claimcount}{Claim}{Claims}
\newtheorem{prop}[thm]{Proposition}
\Crefname{prop}{Proposition}{Propositions}
\Crefname{cor}{Corollary}{Corollaries}
\Crefname{conj}{Conjecture}{Conjectures}
\Crefname{qn}{Question}{Questions}
\Crefname{obs}{Observation}{Observations}
\Crefname{ex}{Example}{Examples}
\theoremstyle{definition}
\Crefname{prob}{Problem}{Problems}
\Crefname{defn}{Definition}{Definitions}
\newtheorem{fact}[thm]{Fact}
\Crefname{fact}{Fact}{Facts}
\newtheorem*{defn*}{Definition}
\theoremstyle{remark}
\newtheorem*{rem}{Remark}
\renewenvironment{proof}[1][]{\begin{trivlist}
\item[\hspace{\labelsep}{\bf\noindent Proof#1.\/}] }{\qed\end{trivlist}}
\newenvironment{cla_proof}[1][]{\begin{trivlist}
\item[\hspace{\labelsep}{\noindent \emph{Proof#1.}\/}] }{\qed\end{trivlist}}
\newcommand{\ceil}[1]{
    \left\lceil #1 \right\rceil
}
\newcommand{\floor}[1]{
    \left\lfloor #1 \right\rfloor
}
\newcommand{\midd}{:}
\newcommand{\eps}{\varepsilon}
\newcommand{\su}{\subseteq}
\DeclareMathOperator{\dist}{dist}
\DeclareMathOperator{\spn}{span}
\def\expandafter\normalsize\expandafter{%
    \normalsize
    \setlength\abovedisplayskip{8pt}
    \setlength\belowdisplayskip{8pt}
    \setlength\abovedisplayshortskip{4pt}
    \setlength\belowdisplayshortskip{4pt}
}
 \setlist[itemize]{leftmargin=*}
\newcommand{\diam}{{\rm{diam}}}
\DeclareFontFamily{OT1}{pzc}{}
\DeclareFontShape{OT1}{pzc}{m}{it}{<-> s * [1.10] pzcmi7t}{}
\DeclareMathAlphabet{\mathpzc}{OT1}{pzc}{m}{it}
\title{\vspace{-0.8cm} Unit and distinct distances in typical norms}
\author{Noga Alon\thanks{Department of Mathematics, Princeton
University, Princeton, USA. Research supported in part by NSF grant
DMS-2154082 and by USA-Israel BSF grant 2018267. Email: \href{mailto:nalon@math.princeton.edu} {\nolinkurl{nalon@math.princeton.edu}}.} \and
Matija Buci\'c\thanks{Department of Mathematics, Princeton University, Princeton, USA. Research supported in part by NSF grants CCF-1900460 and DMS-2349013. Email: \href{mailto:matija.bucic@ias.edu} {\nolinkurl{matija.bucic@ias.edu}}.}
\and
Lisa Sauermann\thanks{Institute for Applied Mathematics, University of Bonn, Bonn, Germany. Part of this work was completed while this author was affiliated with Massachusetts Insitute of Technology. Research supported in part by NSF Award DMS-2100157 and a Sloan Research Fellowship. Email: 
\href{mailto:sauermann@iam.uni-bonn.de} {\nolinkurl{sauermann@iam.uni-bonn.de}}.}
}
 \date{}
\begin{document}

\maketitle

\vspace{-0.5cm}
\begin{abstract}
Erd\H{o}s' unit distance problem and Erd\H{o}s' distinct distances problem are among the most classical and well-known open problems in discrete mathematics. They ask for the maximum number of unit distances, or the minimum number of distinct distances, respectively, determined by $n$ points in the Euclidean plane. The question of what happens in these problems if one considers normed spaces other than the Euclidean plane has been raised in the 1980s by Ulam and Erd\H{o}s and attracted a lot of attention over the years. We give an essentially tight answer to both questions for almost all norms on $\R^d$, in a certain Baire categoric sense.

For the unit distance problem we prove that for almost all norms $\|.\|$ on $\R^d$, any set of $n$ points defines at most $\frac{1}{2} d \cdot n \log_2 n$ unit distances according to $\|.\|$. We also show that this is essentially tight, by proving that for \emph{every} norm $\|.\|$ on $\R^d$, for any large $n$, we can find $n$ points defining at least $\frac{1}{2}(d-1-o(1))\cdot n \log_2 n$ unit distances according to $\|.\|$.

For the distinct distances problem, we prove that for almost all norms $\|.\|$ on $\R^d$ any set of $n$ points defines at least $(1-o(1))n$ distinct distances according to $\|.\|$. This is clearly tight up to the $o(1)$ term.

We also answer the famous Hadwiger--Nelson problem for almost all norms on $\R^2$, showing that their unit distance graph has chromatic number $4$.

Our results settle, in a strong and somewhat surprising form, problems and conjectures of Brass, Matou\v{s}ek, Brass--Moser--Pach, Chilakamarri, and Robertson. The proofs combine combinatorial and geometric ideas with tools from Linear Algebra, Topology and Algebraic Geometry.
\end{abstract}

\section{Introduction}
\label{S1}

\subsection{Unit distances}

Erd\H{o}s' unit distance problem raised in 1946 in
\cite{Erdos-unit-distance} (see also \cite[Chapter 5]{BMP-survey})
is among the best-known open problems
in combinatorics. The problem asks about estimating 
the maximum possible number $U_{\|.\|_2}(n)$ of unit distances
determined by $n$ distinct points in the Euclidean plane $\R^2$ according to the Euclidean norm $\|.\|_2$.
The best bounds to date are 
\[n^{1+\Omega(1/\log \log n)} \le U_{\|.\|_2}(n)
\le O(n^{4/3}).\]
The lower bound appeared in the initial paper
of Erd\H{o}s \cite{Erdos-unit-distance}, and the upper bound was first
proved
by Spencer, Szemer\'{e}di and Trotter \cite{SST-ub} in 1984, see also
\cite{szekely-simple-proof} for a short and elegant 
proof based on the
Crossing Lemma. More on the rich history of this problem can be found
in the surveys \cite{BMP-survey,szemeredi-survey}.

An interesting variant of the problem deals with the same question
in general real normed spaces. This was first suggested by Ulam and
described explicitly by
Erd\H{o}s in the early 1980s \cite{erdos-problem}. 
We call a norm $\|.\|$ on $\R^d$ a $d$-norm and denote by $U_{\|.\|}(n)$ the maximum possible number of unit distances determined by a set of $n$ distinct points in $\R^d$. The unit distance graph of a $d$-norm $\|.\|$ is the graph whose vertices are all points of $\R^d$, where two points are adjacent if and only if the distance between them is one. Thus, $U_{\|.\|}(n)$ is the maximum number of edges in an $n$-vertex subgraph of the unit distance graph of $\|.\|$.
An initial, simple observation is that if the boundary of the unit ball of a $d$-norm $\|.\|$ contains a straight-line segment then $U_{\|.\|}(n)$ is quadratic in $n$, as in this case there are two infinite subsets $A,B$ of $\R^d$ so that the distance between any $\textbf{a} \in A$ and $\textbf{b} \in B$ is one.
On the other hand, if $\|.\|$ is a $2$-norm which is strictly convex  (meaning that the boundary of the unit ball contains no straight line segment), then one can extend the known proofs from the Euclidean case to show that $U_{\|.\|}(n)\le O(n^{4/3})$. 
Valtr \cite{Va} constructed a strictly convex $2$-norm in which for every $n$ there exist $n$-element point sets with at least $\Omega(n^{4/3})$ unit distances. See also \cite{solymosi2022arrangements} for a much more general family of norms with the same property. This shows that the upper bound cannot be improved in general. See \cite{BMP-survey} for more details about the history of the problem of estimating $U_{\|.\|}(n)$ for general norms.

It is not difficult to see that for any $2$-norm $\|.\|$, we have 
$U_{\|.\|}(n) \geq (\frac{1}{2}-o(1))n \log_2 n$. 
Indeed, the graph of the $k$-dimensional hypercube is a subgraph of the unit distance graph of any $2$-norm $\|.\|$ as shown by choosing $k$ random unit vectors $\textbf{u}_1,\textbf{u}_2, \ldots,\textbf{u}_k \in \R^2$ and defining $\textbf{v}_S=\sum_{i \in S} \textbf{u}_i$ for every subset $S \su \{1,2,\ldots,k\}$. If two subsets $S,S'\su \{1,2,\ldots,k\}$ differ in a single element, then the distance according to $\|.\|$ between $\textbf{v}_S$ and $\textbf{v}_{S'}$ is one, as desired.

A remarkable, often repeated (see e.g.\ \cite{gil_matousek,szemeredi-survey,Sw,sheffer_matousek,matouvsek2011dawn,BMP-survey}), result of Matou\v{s}ek \cite{matousek} from 2011  shows that for \emph{most} norms, in a Baire Categoric sense (that will be described in detail in Section \ref{S2}), this is not far from optimal. Namely for a typical $2$-norm $\|.\|$, $U_{\|.\|}(n) \leq O(n \log n \log \log n)$. This suggests the obvious problem of deciding whether or not the $\log \log n$ term is necessary. The question of how small $U_{\|.\|}(n)$ can be has already been considered by Brass \cite{Br0} in 1996, who in particular asked if there exists a $2$-norm with $U_{\|.\|}(n) \leq O(n \log n)$. The corresponding problem in higher dimensions has been considered as well. In particular, Brass, Moser and Pach \cite[Chapter 5, Problems 4 and 5, p.\ 195]{BMP-survey} conjectured that for every $d \geq 3$ and every $d$-norm $\|.\|$, $U_{\|.\|}(n)$ is asymptotically larger than $\Omega(n \log n)$ and asked whether or not for $d \geq 4$ there is a $d$-norm $\|.\|$ so that $U_{\|.\|}(n)= o(n^2)$. Note that for the $d$-dimensional Euclidean norm $\|.\|_2$ it is easy to see that $U_{\|.\|}(n) \geq \Omega(n^2)$ for every $d \geq 4$ (and in fact, the precise constant is known as well, see \cite{Er1}), showing that for the Euclidean norms, the problem is simple in all dimensions $d \geq 4$, despite being wide open in dimensions $2$ and $3$.

In the present paper, we settle the above-mentioned questions of 
Brass, of Matou\v{s}ek and of Brass, Moser and Pach in all dimensions in the following strong form.

\begin{thm}\label{thm:main} 
For any $d\ge 2$, for most $d$-norms $\|.\|$, we have
\[U_{\|.\|}(n) \leq \frac{d}{2}\cdot n \log_2 n\]
for all $n$. More precisely, for all $d$-norms $\|.\|$ besides a meagre set, the following holds: 
For every $n\ge 1$ and every set of $n$ points in $\R^d$, 
there are at most $\frac{d}{2}\cdot n \log_2 n$ unit distances according to $\|.\|$ among the $n$ points.
\end{thm}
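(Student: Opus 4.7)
The plan is to use a Baire category argument. Equip the space of $d$-norms on $\R^d$ with a complete metric (e.g., the Hausdorff distance on the unit balls, after normalizing by sandwiching them between fixed multiples of the Euclidean ball), so that the space of norms becomes a Baire space. A set is meagre iff it is contained in a countable union of nowhere dense sets. For each $n\ge 1$ and each graph $G$ on $[n]$ with $|E(G)| > \tfrac{d}{2}n\log_2 n$, define
\[
\mathcal{B}_{G} := \{\,\|.\| : \exists v_1,\dots,v_n \in \R^d \text{ with } \|v_i-v_j\|=1 \text{ for every } ij \in E(G)\,\}.
\]
Since there are countably many pairs $(n,G)$, it suffices to show each $\mathcal{B}_G$ is nowhere dense. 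Closedness of $\mathcal{B}_G$ is routine: given $\|.\|_k\to \|.\|$ with embeddings $v_1^{(k)},\dots,v_n^{(k)}$ realizing $G$, we may translate so that $v_1^{(k)}=0$; then the $v_i^{(k)}$ lie in a bounded region (bounded in diameter by $n-1$ times some uniform constant controlling how the new norms bound the Euclidean norm), and compactness gives a convergent subsequence whose limit realizes $G$ in $\|.\|$.

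The main task is to show $\mathcal{B}_G$ has empty interior. Given a norm $\|.\|_0$ and $\eps>0$, I would first approximate $\|.\|_0$ within $\eps/2$ by a norm $\|.\|_1$ whose unit sphere is a smooth algebraic hypersurface of sufficiently high degree (such approximations are classical and dense in the norm space). Then, within the finite-parameter family of such algebraic norms, I would show that the norms for which $G$ is realizable form a proper algebraic subvariety, so that a small generic perturbation of $\|.\|_1$ (still within $\eps/2$) escapes $\mathcal{B}_G$.

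The heart of the argument is therefore a geometric non-realizability lemma: for a generic smooth unit sphere $S\subset\R^d$, no $n$-tuple $(v_1,\dots,v_n)\in(\R^d)^n$ satisfies $v_i-v_j\in S$ for all $ij\in E(G)$, whenever $|E(G)|>\tfrac{d}{2}n\log_2 n$. The hard part will be extracting the sharp $\log n$ factor from this genericity. A naive parameter count gives only $m\le dn$, since the map $(v_1,\dots,v_n)\mapsto (v_i-v_j)_{ij\in E(G)}$ is linear of rank at most $dn$, and intersection with $S^m$ generically has dimension $dn-m$. This is vastly weaker than what is needed, and the gap is genuine: as recalled in the introduction, the $k$-dimensional hypercube construction gives $\tfrac{1}{2}n\log_2 n$ unit distances using only $k=\log_2 n$ distinct edge-direction vectors, which make the naive constraints massively redundant.

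To overcome this, I would group the edges of $G$ by direction class: edges sharing a direction $u$ in the embedding impose only one spherical constraint $u\in S$, but form a matching of size $\le n/2$. So if $m>\tfrac{d}{2}n\log_2 n$, then there are at least $s\ge d\log_2 n$ distinct direction classes, and the embedding is controlled by $s$ unit vectors together with an assignment of a direction class to each edge, constrained by the linear relations coming from cycles of $G$. The key combinatorial-algebraic step is then to show that, for generic $S$, this system of constraints is unsatisfiable once $s>d\log_2 n$. I expect this to be attacked inductively: a graph with average degree above $d\log_2 n$ contains a subgraph of minimum degree above $\tfrac{d}{2}\log_2 n$, which lets one peel off a vertex with many spherical incidences and reduce either to an instance on fewer points or to a bipartite/expander structure that is rigid enough to rule out via a direct algebraic-geometric rigidity argument on the product of spheres. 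The main obstacle is proving this rigidity in a form strong enough to yield exactly the tight $\tfrac{d}{2}n\log_2 n$ bound rather than a $\log n$-factor weaker statement.
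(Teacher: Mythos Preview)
Your Baire-category framework and the closedness of each $\mathcal{B}_G$ are fine, and you correctly identify that the naive parameter count only gives $m\le dn$ rather than the required $\tfrac d2 n\log_2 n$. But from that point on the proposal is a sketch with an acknowledged hole: you say the ``main obstacle'' is proving a rigidity statement strong enough to give the tight bound, and you do not prove it. The suggested induction---pass to a subgraph of large minimum degree, peel a vertex, reduce---is not developed, and it is unclear how it would ever recover the sharp $\log_2 n$ factor; the hypercube example you yourself cite shows that the direction-grouping alone cannot do it, since there one has exactly $\log_2 n$ direction classes with no cycle relations beyond the trivial ones and the configuration is realizable in every norm.

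The paper closes this gap with two ideas you are missing. First, a combinatorial lemma (Lemma~3.1, resting on Lemma~3.3): if $n$ points determine more than $\tfrac d2 n\log_2 n$ unit distances with direction vectors $\mathbf u_1,\dots,\mathbf u_k$, then some $\ell$ of the $\mathbf u_i$ span over $\mathbb Q$ at least $d\ell+1$ of them. Lemma~3.3 is the heart: for \emph{linearly independent} directions the unit-distance graph has at most $\tfrac12 n\log_2 n$ edges, proved by an entropy-type induction on $n$ (partition the points by their last coordinate and bound the cross edges by a forest count). A greedy/matroid argument (Lemma~3.5) then promotes this to the ``$\ell$ span $d\ell+1$'' conclusion for arbitrary directions. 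This is exactly the structural fact your induction would have to produce. Second, the paper re-indexes the meagre cover not by $(n,G)$ but by the rational $(d\ell+1)\times\ell$ matrix $A$ encoding these $\mathbb Q$-dependencies (plus a rational angle parameter $\eta$). Nowhere-density of each $\mathcal A_{A,\eta}$ is then proved by approximating the unit ball by a symmetric \emph{polytope} with tiny facets: the condition ``$d\ell+1$ unit vectors are fixed $\mathbb Q$-combinations of $\ell$ of them'' becomes $d\ell+1$ linear functions of the $d\ell$ coordinates of $\mathbf u_1,\dots,\mathbf u_\ell$, forcing a linear relation among $d\ell+1$ of the facet offsets $t_i$. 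A generic perturbation of the $t_i$ avoids these finitely many hyperplanes. Your algebraic-hypersurface approach could in principle be made to work, but without the span-overflow lemma you have no algebraic constraint to impose on the parameters of $S$, and no amount of genericity of $S$ alone will rule out configurations with merely many (but $\mathbb Q$-independent) unit directions.
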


\begin{thm}
\label{t12}
Let $d\ge 2$ be fixed. Then for every $d$-norm $\|.\|$, we have \[U_{\|.\|}(n) \geq \frac{d-1-o(1)}{2} \cdot n \log_2 n.\] for all large $n$. That is, for every $d$-norm $\|.\|$ and every $n$, there exists a set of $n$ points in $\R^d$ such that the number of unit distances according to $\|.\|$ among the $n$ points is at least $\frac{d-1-o(1)}{2} \cdot n \log_2 n$, where the $o(1)$-term tends to zero as $n\to \infty$.
\end{thm}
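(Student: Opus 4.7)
My plan is to adapt the hypercube construction recalled in the excerpt by equipping a single point set in $\R^d$ with $d-1$ edge-disjoint hypercube structures, each contributing $\tfrac{1}{2} n \log_2 n$ unit distances.

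The core combinatorial construction produces $2^k$ points with $(d-1)k \cdot 2^{k-1}$ unit distances, as follows. Choose $d-1$ bases $B_1, \ldots, B_{d-1}$ of the $\mathbb{F}_2$-vector space $\mathbb{F}_2^k$ whose $(d-1)k$ columns (viewed as subsets $S_i^{(\alpha)} \su [k]$) are pairwise distinct; such bases exist once $k$ is large compared to $d$, by a greedy extension argument using the $2^k - 1$ available nonzero subsets of $[k]$. Suppose we can find unit vectors $u_1, \ldots, u_k \in \R^d$ (unit in $\|.\|$) so that each subset sum $v_i^{(\alpha)} := \sum_{j \in S_i^{(\alpha)}} u_j$ is again a unit vector in $\|.\|$. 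Let $P := \{\sum_{j \in S} u_j : S \su [k]\}$ (generically of size $2^k$). For each $(\alpha, i)$, the involution $S \mapsto S \triangle S_i^{(\alpha)}$ on $\{0,1\}^k$ pairs subsets whose $P$-images differ by the unit vector $v_i^{(\alpha)}$, contributing $2^{k-1}$ unit-distance pairs. Distinctness of the columns, together with (generic) distinctness of the vectors $v_i^{(\alpha)}$ and their negatives, forces these pairs to be distinct across different $(\alpha, i)$. For general $n$, take $m \approx \log n$ far-apart disjoint translates of this $2^k$-point construction with $k \approx \log_2(n/m)$, plus at most $2^k$ padding points placed to avoid new unit distances; since $mN \geq n(1-o(1))$ and $k = \log_2 n(1-o(1))$, the total is $\tfrac{d-1-o(1)}{2}\, n \log_2 n$ unit distances.

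The crux is the geometric existence problem: for every $d$-norm $\|.\|$, produce unit vectors $u_1, \ldots, u_k$ making all $dk$ of the $u_j$'s and the $v_i^{(\alpha)}$'s unit in $\|.\|$. Dimensions match ($dk$ real parameters against $dk$ norm-one equations), and I would construct the $u_j$'s inductively. Given $u_1, \ldots, u_\ell$ already satisfying the partial system, extend each basis $B_\alpha$ by one new column $T_\alpha \cup \{\ell+1\}$ with $T_\alpha \su [\ell]$; I would take $T_1 = \emptyset$ (so $B_1$ remains the singleton basis) and let the remaining $T_\alpha$ be distinct singletons $\{j_\alpha\}$. The new conditions on $u_{\ell+1}$ are $\|u_{\ell+1}\| = 1$ together with $\|u_{\ell+1} + u_{j_\alpha}\| = 1$ for $\alpha = 2, \ldots, d-1$; equivalently, $u_{\ell+1}$ lies on the intersection of $d-1$ translates of the unit sphere of $\|.\|$, a subset of $\R^d$ of expected codimension $d-1$ and so generically one-dimensional.

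The hard part will be to prove that this intersection is non-empty for every $d$-norm. Pairwise non-emptiness is easy, since the centres of the $d-1$ translated spheres are pairwise at distance at most $2$, but the full $(d-1)$-fold intersection requires a topological argument. One natural option is a degree-theoretic computation for the map $(u_1, \ldots, u_k) \mapsto (\|u_j\|, \|v_i^{(\alpha)}\|)$ between manifolds of the same dimension; another is a homotopy argument connecting the given norm $\|.\|$ to the Euclidean norm through the convex space of all $d$-norms — where explicit solutions can be exhibited in small cases — and appealing to homotopy invariance of this degree. Once existence is secured, routine genericity and perturbation arguments (perturbing within the $1$-parameter family of choices at each inductive step) ensure that $|P| = 2^k$ and that the vectors $v_i^{(\alpha)}$ are pairwise distinct and non-collinear, so that the count in the second paragraph is valid.
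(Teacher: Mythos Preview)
Your approach is genuinely different from the paper's. The paper does not try to overlay $d-1$ hypercube structures on the same $2^k$-point set. Instead, it first proves (via the Hurewicz Dimension Lowering Theorem, applied to the projection $(\textbf{x}, \textbf{x}_1, \ldots, \textbf{x}_{d-1}) \mapsto (\textbf{x}_1, \ldots, \textbf{x}_{d-1})$ from the incidence variety $\{(\textbf{x}, \textbf{x}_1, \ldots, \textbf{x}_{d-1}) : \textbf{x}_i - \textbf{x} \in \partial B\}$) that for every $d$-norm there exist points $\textbf{x}_1, \ldots, \textbf{x}_{d-1}$ together with an \emph{infinite} set $S$ such that $\|\textbf{z} - \textbf{x}_i\| = 1$ for all $\textbf{z} \in S$ and all $i$. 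This $K_{d-1,\infty}$ structure is then leveraged: the Minkowski sum $\{\textbf{x}_1, \ldots, k\textbf{x}_1\} + \cdots + \{\textbf{x}_{d-1}, \ldots, k\textbf{x}_{d-1}\} + \{0, \textbf{z}_1\} + \cdots + \{0, \textbf{z}_m\}$ with $\textbf{z}_j \in S$ has $k^{d-1} 2^m$ points and roughly $(d-1)m \cdot k^{d-1} 2^{m-1}$ unit distances, since every difference $\textbf{z}_j - \textbf{x}_i$ is a unit vector. Choosing $m \approx \log_2 n$ and $k^{d-1} 2^m \approx n$ gives the bound. The crucial point is that the $\textbf{x}_i$'s are \emph{not} prescribed: Hurewicz only guarantees that suitable $\textbf{x}_i$'s exist close to any given reference points, and the construction then uses whatever $\textbf{x}_i$'s come out.

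Your proposal has a real gap at exactly the spot you identify. In your inductive step, the centres $0, -u_{j_2}, \ldots, -u_{j_{d-1}}$ are already fixed by earlier choices, and you need the intersection $\partial B \cap \bigcap_{\alpha} (\partial B - u_{j_\alpha})$ to be non-empty (in fact to contain a $1$-dimensional family, for the subsequent genericity argument). Neither of your suggested routes is carried out, and both face genuine obstacles. For the homotopy-to-Euclidean idea: even in the Euclidean case the system $\|u\|_2 = 1$, $\|u + u_{j_\alpha}\|_2 = 1$ (equivalently $u \cdot u_{j_\alpha} = -1/2$) is infeasible for some configurations of the $u_{j_\alpha}$ (for instance when two of them are nearly antipodal), so one cannot fix the $u_{j_\alpha}$'s first and then solve; a homotopy argument would have to track solvability of the full coupled system in all $u_j$ simultaneously and rule out solutions escaping or collapsing along the deformation. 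For the degree idea you would need a proper map between equidimensional manifolds, but a general norm is not smooth, and your own dimension count is off (with $B_1$ the singleton basis there are only $(d-1)k$ independent equations, not $dk$), so setting up a well-defined degree requires care. None of this is hopeless, but as written the key geometric step is only a hope, whereas the paper's Hurewicz argument obtains existence (and infinitude) of the required intersection for free precisely by giving up the freedom to prescribe the centres.
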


\subsection{Distinct distances}

The equally famous Erd\H{o}s distinct distances problem is concerned with estimating the minimum possible number $D_{\|.\|_2}(n)$ of distinct distances determined by $n$ points in the Euclidean plane $\R^2$. See for example the book \cite{distinct_distances_book} focusing on this question, its history, generalisations and connections to other areas. The Erd\H{o}s distinct distances problem also traces its origins to the 1946 paper \cite{Erdos-unit-distance} of Erd\H{o}s. In this paper, he considered the same problem for higher-dimensional Euclidean spaces as well. For the planar case, Erd\H{o}s proved an upper bound of $O(n/\sqrt {\log n})$ and conjectured this is tight. After a long sequence of improvements of the lower bound, Guth and Katz established in \cite{GK} a nearly tight lower bound of $\Omega(n/ \log n)$. For higher-dimensional Euclidean spaces even the correct exponent of $n$ is not known, see \cite{SV2} for the best known bounds. 

The Erd\H{o}s distinct distances problem has a long history in the case of general $d$-norms as well, see \cite{Sw} for a recent survey on what is known in this direction. Given a $d$-norm $\|.\|$ let us denote by $D_{\|.\|}(n)$ the minimum possible number of distinct distances, according to $\|.\|$, determined by $n$ points in $\R^d$. For every $d$-norm $\|.\|$ and  every $n$, we clearly have $D_{\|.\|}(n) \leq n-1$ by considering any set of $n$ points along an arithmetic progression on a line. Brass conjectured that $D_{\|.\|}(n) = o(n)$ for any $d \geq 2$ and any $d$-norm $\|.\|$ (see \cite[Chapter 5.4, Conjecture 5, p.\ 211]{BMP-survey}). Here we refute his conjecture in a strong form. For most $d$-norms $\|.\|$, we show that $D_{\|.\|}(n)$ is not only linear in $n$, but is in fact of the form $(1-o(1))n$. 

\begin{thm}
\label{thm:distinct-distances} 
For any fixed $d \ge 2$, for most $d$-norms $\|.\|$ we have
\[D_{\|.\|}(n) =(1-o(1))n\]
for all $n$. More precisely, for all $d$-norms $\|.\|$ besides a meagre set, the following holds: For every $n$, among any $n$ points in $\R^d$ there are at least $(1-o(1))n$ distinct distances according to $\|.\|$, where the
$o(1)$-term tends to zero as $n\to \infty$.
\end{thm}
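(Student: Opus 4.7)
The plan is to combine a Baire-categoric genericity argument with a classical sumset bound from additive combinatorics. The driving heuristic is that for a generic $d$-norm $\|.\|$, two pairs of points should produce the same distance only for the trivial reason that their difference vectors coincide up to sign, i.e., the equation $\|p_i - p_j\| = \|p_k - p_l\|$ should force $p_i - p_j = \pm(p_k - p_l)$. Once this is established for an $n$-point set $A = \{p_1, \ldots, p_n\}$, the quantity $D_{\|.\|}(A)$ equals the number of $\pm$-equivalence classes of nonzero vectors in the difference set $A - A$. Since $|A - A| \geq 2|A| - 1$ for any finite $A \subset \R^d$ (the sharp torsion-free sumset bound, with equality precisely when $A$ is an arithmetic progression on a line), and the nonzero differences come in $\pm$-pairs, this gives $D_{\|.\|}(A) \geq n - 1$, which is $(1-o(1))n$. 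An arithmetic progression on a line shows this to be tight up to the $o(1)$ term.

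For the Baire framework, presumably set up in Section \ref{S2} for the proof of Theorem \ref{thm:main}, the underlying observation is that for any fixed ordered pair of nonzero vectors $(u, v)$ with $u \neq \pm v$, the set of $d$-norms satisfying $\|u\| = \|v\|$ is closed with empty interior: one can always perturb the unit ball slightly to break any single such equation. Intersecting the complements of these sets over a suitable countable collection of pairs $(u, v)$ produces a comeager set $\mathcal{G}$ of ``generic'' norms, each of which rules out the prescribed countable list of non-trivial distance coincidences.

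The main obstacle is the quantifier gap between the countable family of pairs that the Baire argument handles and the uncountable family of difference vectors that arise from $n$-point configurations in $\R^d$. I expect the proof to bridge this via a compactness-and-approximation scheme: for each fixed $n$, configurations modulo translation and scaling lie in a compact finite-dimensional space, which can be covered by a finite $\varepsilon_n$-net. One verifies the genericity of $\mathcal{G}$ against the countable union of these nets over all $n$ and all $\varepsilon_n$ in a rational sequence tending to $0$, and then propagates to an arbitrary configuration by continuity of the sorted distance-multiset in the configuration. The $(1 - o(1))n$ slack in the statement absorbs both this approximation error and a quantitative version of the structural step near configurations that are close to an arithmetic progression, where the sumset bound is tight and the analysis must be carried out with quantitative care. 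Arranging the scheme uniformly in $n$, so that a single comeager set of norms works for every $n$ simultaneously, is the principal technical subtlety.
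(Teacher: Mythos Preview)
Your central heuristic is false for \emph{every} norm, not just for a meagre set. Given any $d$-norm $\|.\|$, pick any $n-1$ unit vectors $u_1,\dots,u_{n-1}$ with $u_i\neq \pm u_j$ and form $A=\{0,u_1,\dots,u_{n-1}\}$. The $n-1$ pairs $(0,u_i)$ all have distance $1$, yet their difference vectors are pairwise non-$\pm$-equal. The adversary choosing the configuration sees the norm first and can always exploit its unit sphere, so no norm can satisfy ``$\|p_i-p_j\|=\|p_k-p_l\|$ implies $p_i-p_j=\pm(p_k-p_l)$ for all point sets.'' Your proposed compactness/approximation scheme does not repair this: a distance coincidence is a codimension-one (closed, not open) condition on configurations, so ruling out coincidences on a countable net gives no control over nearby configurations, where coincidences can appear or disappear under arbitrarily small perturbations. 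The $(1-o(1))n$ slack cannot absorb this, because the failure above is not an approximation error---it is a structural obstruction present for every norm.

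The paper takes a fundamentally different route. Rather than trying to rule out distance coincidences directly, it shows that if some $n$-point set has at most $m\approx(1-\mu)n$ distinct distances $z_1,\dots,z_m$, then every difference $p_x-p_y$ lies in $\spn_F(u_i)$ for some unit vector $u_i$, where $F=\mathbb{Q}(z_1,\dots,z_m)$. A combinatorial lemma (Lemma~\ref{lem:point-sets-lemma-for-distinct-distances}, using Ungar's theorem on directions) then extracts from this a rigid algebraic constraint on the norm: some set $I$ of unit vectors has at least $d|I|+m+1$ unit vectors in its $F$-span. The crucial gain is that this constraint is indexed by a \emph{countable} family of data (a matrix over $\mathbb{Q}(x_1,\dots,x_m)$ and a rational angle bound), and each such condition cuts out a nowhere dense set of norms via a dimension count (Lemma~\ref{lem:A-A-eta-nowhere-dense}). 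This is what converts the uncountable quantifier over configurations into a countable union suitable for a Baire argument---precisely the step your outline lacks.
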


\subsection{Hadwiger--Nelson Problem}
The question of determining the chromatic number of the unit distance graph of the Euclidean norm in the plane is yet another famous open problem in discrete geometry, known as the Hadwiger--Nelson problem. In other words, this question asks how many colours are needed in order to colour all points in the plane such that any two points with Euclidean distance one receive different colours. This problem dates back to 1950, and it has been known for a long time that the answer is at least $4$ and at most $7$. In a recent computational breakthrough \cite{Exoo-Ismailescu,de-Grey}, the lower bound has been improved to $5$, sparking a collaborative Polymath project \cite{polymath_project} focusing on the problem. See also \cite{BMP-survey,Sw} for more details on the history of the Hadwiger--Nelson problem.

The analogue of this problem for other planar norms was studied by Chilakamarri \cite{Chilakamarri} in 1991, who showed that the unit distance graph of every $2$-norm has chromatic number at least $4$ and at most $7$. In addition, \cite[Problem 5]{Chilakamarri}, attributed to Robertson, asks for the chromatic number of the unit distance graph of at least one strictly convex $2$-norm to be evaluated. We prove that the chromatic number of the unit distance graph of most $2$-norms equals $4$.

\begin{thm}\label{thm:hadwiger-nelson}
For all $2$-norms $\|.\|$ besides a meagre set, the unit distance 
graph of $\|.\|$ has chromatic number equal to $4$.
\end{thm}

In contrast, as mentioned above, it was recently shown \cite{Exoo-Ismailescu,de-Grey} that for the Euclidean $2$-norm the chromatic number of the unit distance graph is at least $5$. Thus, the behaviour for most $2$-norms is different from the Euclidean $2$-norm, which in particular disproves a conjecture of Chilakamarri \cite[p.~355]{Chilakamarri}. 

In dimension $d$, our arguments give an upper bound of $2^d$ for the chromatic number of the unit distance graph of most $d$-norms. It is known that the chromatic number of the unit distance graph of any $d$-norm is at most exponential in $d$ (see \cite{FK,Ku}), but the exponential base in these known bounds is much larger than $2$. See also \cite{Sw} for a more detailed survey on what is known surrounding this problem. 
We furthermore note that results of Frankl and Wilson \cite{FW} give an exponential lower bound for the chromatic number of the unit distance graph of all $d$-norms which are invariant under coordinate permutations.

Our upper bound of $2^d$ for most $d$-norms actually also holds for the chromatic number of the odd distance graph (i.e.\ the graph whose edges correspond to pairs of points in $\R^d$ whose distance is an odd integer), which is a stronger result as the unit distance graph is a subgraph of the odd distance graph. This is very different from the Euclidean case, where by a recent result of Davies~\cite{Da} already in the plane the chromatic number of the odd distance graph is infinite.

\section{Proof overview}\label{Sec:outline}
The proofs of our theorems use arguments from combinatorics, polyhedral and discrete geometry, topology and algebra. In this section, we give a high-level proof overview. We note that this overview is simplifying numerous parts and omitting certain important points, since the goal of this section is to present just the general ideas and how they come together. We begin with \Cref{thm:main}. 

The proof of \Cref{thm:main} splits into two parts. The first, more combinatorial part shows that for a norm with a lot of unit distances among some set of $n$ points there must be a lot of rational linear dependencies between the corresponding unit vectors. More precisely, if the bound in \Cref{thm:main} fails for some $d$-norm $||.||$, namely if we can find $n$ points with more than $(d/2)\cdot n\log_2 n$ unit distances, then there is some set of unit vectors whose $\mathbb{Q}$-linear span contains a lot of other unit vectors. The second, more geometric part consists of showing that for most norms, this situation cannot happen. In other words, for most norms, there cannot be a collection of unit vectors from which a lot of other unit vectors can be obtained as rational linear combinations.

In the first part, we fix $n$ points with more than $(d/2)\cdot n\log_2 n$ unit distances according to some some $d$-norm $||.||$. Our goal is to show that there must be a lot of rational linear dependencies between the vectors $\mathbf{u}_1,\dots, \textbf{u}_k$ with $||\mathbf{u}_1||=\dots =||\textbf{u}_k||=1$ describing these unit distances. If there are not so many rational linear dependencies (more precisely, if for every $\ell$, the $\mathbb{Q}$-linear span of any $\ell$ of the vectors $\mathbf{u}_1,\dots, \textbf{u}_k$ contains at most $d\cdot \ell$ of the vectors $\mathbf{u}_1,\dots, \textbf{u}_k$), then we can find a subset of these unit vectors that is linearly independent over $\mathbb{Q}$ and accounts for a relatively large number of unit distances among the $n$ fixed points (namely, for more than $(1/2)\cdot n\log_2 n$ unit distances). On the other hand, for $n$ fixed points, we can analyse for how many pairs of points their difference can be among a given list of $\mathbb{Q}$-linearly independent vectors, and it turns out that this can happen for at most $(1/2)\cdot n\log_2 n$ pairs of points 
(this follows from an isoperimetric inequality for grids due to Bollob\'as and Leader \cite{BL}, but we also give a self-contained proof). 
This contradiction means that our original set of unit vectors $\mathbf{u}_1,\dots, \textbf{u}_k$  must have many rational linear dependencies, as desired.

For the second part, we need to show that there are only ``few'' special $d$-norms with the property that for some collection of unit vectors, a lot of other unit vectors can be obtained as rational linear combinations. So we need to show that the set of $d$-norms with this property can be covered by countably many nowhere-dense subsets in the space of $d$-norms (with a certain natural topology, see \Cref{S2} for more details). Roughly speaking, our countably many subsets correspond to the different possibilities for the size of the collection of unit vectors and the rational coefficients in the relevant rational linear combinations, as well as a lower bound for the separation angle between any two of these unit vectors. To prove that each of these subsets is nowhere dense in the space of all $d$-norms, we show that for any $d$-norm whose unit ball is a convex polytope with sufficiently small facets, there is a slight perturbation (obtained by slightly translating each facet hyperplane) that does not appear as the unit ball of an accumulation point of the subset. Heuristically speaking, if such a perturbation appears as the unit ball of a norm from this subset, then certain linear equations need to be satisfied for the translation lengths for the facets containing the unit vectors appearing in the above property (as a consequence of the linear relations between the unit vectors themselves). A generic perturbation does not satisfy these linear equations and therefore cannot correspond to an accumulation point of norms from this subset. This establishes that each of these subsets is indeed nowhere dense, and concludes the proof of \Cref{thm:main}.

The proof of \Cref{thm:distinct-distances} follows a similar overall approach, but various steps become more involved and significant new ideas are required. In the first part, for $n$ given points with few distinct distances according to some $d$-norm, we work over the field extension of $\mathbb{Q}$ generated by the distinct distances between these $n$ points (so we consider linear relations with coefficients in this extension field). We also need to establish stronger quantitative bounds for the number of unit vectors that we find in the span of some collection of unit vectors. Overall, the first part of the proof of \Cref{thm:distinct-distances} is much more involved than for \Cref{thm:main}. For example, one cannot use the Bollob\'as--Leader edge-isoperimetric inequality anymore in this setting, since here we need a more general statement (see \Cref{lem:point-sets-lin-independent}) compared to the setting of \Cref{thm:main} (we prove this more general statement via combinatorial arguments). The second part of the proof of \Cref{thm:distinct-distances} is relatively similar to \Cref{thm:main}, but requires some basic tools from algebraic geometry instead of the linear algebraic arguments in the proof of \Cref{thm:main}.

For our result on the Hadwiger--Nelson problem for typical norms in \Cref{thm:hadwiger-nelson}, we combine the second part of the proof of \Cref{thm:main} with a modified version of the first part. In particular, we make crucial use of the Edmonds Matroid Partitioning Theorem \cite{edmonds}.

We note that the general idea of trying to control the number of linear dependencies between unit vectors in a typical norm is due to Matou\v{s}ek \cite{matousek}, and his proof of the upper bound $U_{\|.\|}(n) \leq O(n \log n \log \log n)$ for most $2$-norms $||.||$ similarly splits into two parts as our proof of \Cref{thm:main}. For the second part, showing that for most norms there cannot be many linear dependencies between unit vectors, our approach is inspired by his arguments. 
However, he uses hands-on geometric arguments in dimension $2$ about ``bulging'' line segments or decomposing into trapezoids, that do not seem to easily extend to higher dimensions. Therefore, to obtain our results in any dimension, we introduced several new ideas, in particular making the proof more algebraic. 
On the other hand, the first part of our proof is radically different from Matou\v{s}ek's. His proof is based on graph theoretic ``expansion'' arguments via probabilistic methods, leading to his weaker bound. We develop a completely new strategy for the first part of the proof instead, resulting in an essentially tight bound.

To prove our new lower bound on $U_{\|.\|}(n)$ for any $d$-norm $||.||$ in \Cref{t12}, we consider Minkowski sums of certain carefully chosen small point sets, such that in a grid-like fashion we obtain many unit distances. To ensure that we can find suitable small point configurations serving as the base sets of our construction, we use a result from dimension theory: the Hurewicz dimension lowering theorem.

\textbf{Organisation.} The next section contains some background and preliminary lemmas. The first part of the proofs of \Cref{thm:main,thm:distinct-distances} can be found in Section \ref{S3}, and the second part in Section \ref{S4}. \Cref{thm:hadwiger-nelson} is then proved in \Cref{sec:hadwiger-nelson}, relying on the previous sections. Finally, we prove Theorem \ref{t12} in Section \ref{S5} and finish with some concluding remarks and open problems in Section \ref{S6}.

\section{Geometric preliminaries}
\label{S2}

\subsection{Background}

We begin by setting up some notation and introducing the 
notions we will work with. We note that all our logarithms are in base $2$ unless specified otherwise.

For $d\ge 1$, a norm on $\R^d$ is a mapping $\nn$ that assigns a non-negative real number $||\textbf{x}||$ to each $\textbf{x} \in \R^d$ such that the following three conditions hold:
\begin{compactitem}
\item For every $\textbf{x}\in \R^d$, we have $||\textbf{x}||= 0$ if and only if $\textbf{x} = \textbf{0}$.
\item We have $||\alpha \textbf{x}||= |\alpha|\cdot||\textbf{x}||$ for all $\alpha \in \R$ and $\textbf{x}\in \R^d$.
\item The triangle inequality holds, meaning that $||\textbf{x} + \textbf{y}||\le  ||\textbf{x}|| + ||\textbf{y}||$ for all $\textbf{x}, \textbf{y}\in \R^d$.
\end{compactitem}
Each norm $\nn$ on $\R^d$ is uniquely specified by its \emph{unit
ball}, defined as the set of all $\textbf{x}\in \R^d$ for which
$||\textbf{x}||\le 1$. A unit ball of any norm is a closed, bounded,
$\textbf{0}$-symmetric convex body containing $\textbf{0}$ in its
interior. Furthermore, any such body appears as the unit ball of a unique
norm. Let $\B_d$ denote the set of all unit balls of norms in $\R^d$
or equivalently the set of all closed, bounded, $\textbf{0}$-symmetric
convex bodies in $\R^d$ containing $\textbf{0}$ in the interior. As
discussed below, it is known that $\B_d$
endowed with the so-called Hausdorff metric $d_H$ forms a Baire space.

The Hausdorff distance $d_H(A,B)$ of two sets $A,B\su \R^d$ is defined as
\[d_H(A,B):=\max\left\{\sup_{\textbf{a} \in A} \inf_{\textbf{b} \in B} ||\textbf{a}-\textbf{b}||_2,\: \sup_{\textbf{b} \in B} \inf_{\textbf{a} \in A} ||\textbf{a}-\textbf{b}||_2\right \},\]
where $\nn_2$ denotes the Euclidean distance in $\R^{d}$. If $A,B\su \R^d$ are closed and bounded, then one can replace the suprema and infima in the above definition with maxima and minima. So in this case the Hausdorff distance $d_H(A,B)$ is simply the ``maximum distance'' of a point in $A$ from the set $B$ or of a point in $B$ from the set $A$. Note that the Hausdorff distance satisfies the triangle inequality.

A set $S$ in a metric (or topological) space $X$ is nowhere dense if every non-empty open set $U \subseteq X$ contains a nonempty open set $V$ with $V \cap S=\emptyset$. A meagre set in $X$ is a countable union of nowhere dense sets. Note that a subset of any meagre set is also meagre. The space $X$ is called a Baire space if the complement of each meagre set in $X$ is dense. It is known that $\B_d$ endowed with the Hausdorff metric $d_H$ forms a Baire space (this follows for example from \cite[Theorem 6.4]{gruber} together with the Baire Category Theorem). 

The diameter of a non-empty bounded closed subset $S\su \mathbb{R}^d$ (with respect to the Euclidean distance) is defined as
\[\diam(S)=\max_{\textbf{a},\textbf{b}\in S} ||\textbf{a}-\textbf{b}||_2\]
(note that this maximum is indeed well-defined since $S$ is closed and bounded).

A half-space in $\mathbb{R}^d$ is the closed subset of $\mathbb{R}^d$ given by the solutions $\textbf{x}\in \mathbb{R}^d$ to some linear inequality of the form $\textbf{a}\cdot \textbf{x}\le b$ for some $\textbf{a}\in \mathbb{R}^d$ and some $b\in \mathbb{R}$ (geometrically, this is the set of points on one side of the affine hyperplane given by $\textbf{a}\cdot \textbf{x}= b$, including the hyperplane itself). A polytope $P\su \R^d$ is an intersection of finitely many half-spaces in $\mathbb{R}^d$. Note that every polytope $P$ is convex. Every bounded polytope $P\su \R^d$ can also be described as a convex hull of finitely many points in $\R^d$.

Every $\textbf{0}$-symmetric polytope $P$ containing $\textbf{0}$ in its interior can be written in the form
\[P=\{\textbf{x}\in \R^d \midd |\textbf{o}_i\cdot \textbf{x}|\le t_i\text{ for }i=1,\dots,h\}\]
with non-zero vectors $\textbf{o}_1,\dots,\textbf{o}_h\in \R^d$ and positive real numbers $t_1,\dots,t_h$. The facets of such a polytope are the intersections of the form $P\cap H$ for some hyperplane $H$ such that the intersection $P\cap H$ is $(d-1)$-dimensional and $P$ is contained in one of the closed half-spaces bounded by $H$. If $P$ is $\textbf{0}$-symmetric, then the facets appear in pairs of opposite facets (which are parallel to each other).

A set $B\su \R^d$ is strictly convex, if for all distinct $\textbf{a},\textbf{b}\in B$ and all $0<\alpha<1$ the point $\alpha \textbf{a}+(1-\alpha)\textbf{b}$ is in the interior of $B$. For a strictly convex set $B\su \R^d$, for every point $\textbf{b}$ on the boundary of $B$ there exists a hyperplane $H$ with $H\cap B=\{\textbf{b}\}$ such that $B$ is contained in one of the half-spaces bounded by $H$. The unit ball $B$ of a norm $\nn$ on $\R^d$ is strictly convex if the triangle inequality is a strict inequality $||\textbf{x} + \textbf{y}||<  ||\textbf{x}|| + ||\textbf{y}||$ for all non-zero vectors $\textbf{x}, \textbf{y}\in \R^d$ that are not multiples of each other. 
Indeed, in this case for non-zero $\textbf{a},\textbf{b}\in B$ with $\spn_{\R}(\textbf{a})\ne \spn_{\R}(\textbf{b})$ and $0<\alpha<1$, we have  $||\alpha \textbf{a}+(1-\alpha)\textbf{b}||<||\alpha \textbf{a}||+||(1-\alpha)\textbf{b}||=\alpha ||\textbf{a}||+(1-\alpha)||\textbf{b}||\le \alpha+(1-\alpha)=1$. For distinct vectors $\textbf{a},\textbf{b}\in B$ with $\spn_{\R}(\textbf{a})= \spn_{\R}(\textbf{b})$ we always have $||\alpha \textbf{a}+(1-\alpha)\textbf{b}||< 1$ since we either have $\max\{||\textbf{a}||,||\textbf{b}||\}<1$ or $\textbf{a}=-\textbf{b}$. And for $\textbf{a},\textbf{b}\in B$ with $\textbf{a}=\textbf{0}$ or $\textbf{b}=\textbf{0}$ we also trivially have $||\alpha \textbf{a}+(1-\alpha)\textbf{b}||< 1$. Thus the triangle inequality for $\nn$ being strict for all non-zero vectors $\textbf{x}, \textbf{y}\in \R^d$ with $\spn_{\R}(\textbf{x})\ne \spn_{\R}(\textbf{y})$ indeed implies that $B$ is strictly convex.

Finally, we record the following simple algebraic fact, which we will use in our proof of Theorem \ref{thm:distinct-distances}.

\begin{fact}\label{fact-trans-degree}
For any positive integer $m$, given $m+1$ rational functions $f_1,\dots,f_{m+1}\in \mathbb{R}(x_1,\dots,x_m)$ in $m$ variables with real coefficients, there exists a nonzero polynomial $P\in \mathbb{R}[y_1,\dots,y_{m+1}]$ such that $P(f_1,\dots,f_{m+1})=0$.
\end{fact}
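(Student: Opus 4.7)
The plan is to prove this via a standard transcendence-degree dimension count: the field $\mathbb{R}(x_1,\dots,x_m)$ has transcendence degree $m$ over $\mathbb{R}$, so any $m+1$ of its elements must be algebraically dependent. I would make this concrete via the following linear-algebraic argument rather than quoting the abstract algebra result, so that the fact is self-contained within the paper.

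First I would write each $f_i = p_i/q_i$ with $p_i,q_i \in \mathbb{R}[x_1,\dots,x_m]$, and let $D$ be a common upper bound on $\deg p_i$ and $\deg q_i$ for all $i = 1,\dots,m+1$. Then, for a large integer $N$ to be chosen, I would look at the collection of monomials $f_1^{a_1}\cdots f_{m+1}^{a_{m+1}}$ with $a_1+\dots+a_{m+1}\le N$; the number of such monomials is $\binom{N+m+1}{m+1}$, which grows like $N^{m+1}/(m+1)!$. After multiplying each such monomial by the common denominator $Q:=(q_1\cdots q_{m+1})^N$, I would obtain an honest element of $\mathbb{R}[x_1,\dots,x_m]$ of total degree at most $D(m+1)N$.

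The key comparison is then that the ambient space of polynomials in $x_1,\dots,x_m$ of degree at most $D(m+1)N$ has dimension $\binom{D(m+1)N+m}{m}$, which grows only like $N^m$. Hence for $N$ sufficiently large, the polynomials $\{Q\cdot f_1^{a_1}\cdots f_{m+1}^{a_{m+1}}\}_{a_1+\dots+a_{m+1}\le N}$ must be linearly dependent over $\mathbb{R}$. Taking a nontrivial such linear relation and dividing through by $Q$ produces a nontrivial $\mathbb{R}$-linear relation among the monomials $f_1^{a_1}\cdots f_{m+1}^{a_{m+1}}$, which is exactly a nonzero $P\in \mathbb{R}[y_1,\dots,y_{m+1}]$ with $P(f_1,\dots,f_{m+1})=0$.

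I do not anticipate a real obstacle here: the whole argument reduces to comparing the two polynomial growth rates $N^{m+1}$ and $N^m$, which is immediate for $N$ large. The only thing worth being careful about is bookkeeping the degree of $Q\cdot f_1^{a_1}\cdots f_{m+1}^{a_{m+1}}$ as an actual polynomial (using that $a_i\le N$ ensures $q_i^{N-a_i}$ is a genuine polynomial factor for every $i$), but this causes no difficulty.
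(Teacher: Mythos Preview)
Your argument is correct and complete; the dimension count works exactly as you describe, and the only bookkeeping point (that $a_i\le N$ makes $q_i^{N-a_i}$ a polynomial) is handled.

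The paper, however, does not give this argument: it simply records that the transcendence degree of $\mathbb{R}(x_1,\dots,x_m)$ over $\mathbb{R}$ equals $m$, so any $m+1$ elements must be algebraically dependent. Your route is genuinely different in that it avoids quoting the abstract transcendence-degree machinery and instead unpacks it into an explicit pigeonhole/linear-algebra count. This buys self-containment and makes the fact accessible without field-theory background; the paper's route buys brevity, since the statement is a one-line consequence of a standard result. Both are entirely adequate here.
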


This follows immediately from the well-known fact that the transcendence degree of the function field \linebreak $\mathbb{R}(x_1,\dots,x_m)$ over $\mathbb{R}$ is equal to $m$. Indeed, since $m+1$ is larger than this transcendence degree, there must be an algebraic relationship between $f_1,\dots,f_{m+1}$.

\subsection{Geometric lemmas}

This section contains some basic geometric lemmas. Although most of the content of this section is well-known, we include some of the proofs for the reader's convenience.

The first two lemmas below are only needed to prove the third lemma in this section.

\begin{lem}\label{lem-eps-net}
    Let $T\su \R^d$ be a bounded subset and let $\eps>0$. Then there exists a finite subset $S\su T$ such that for every point $\textbf{t}\in T$ there exists a point $\textbf{s}\in S$ with $||\textbf{s}-\textbf{t}||_2\le \eps$.
\end{lem}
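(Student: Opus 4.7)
The plan is to produce such a set $S$ by a standard greedy/packing argument. Since $T$ is bounded, it is contained in some Euclidean ball of finite radius $R$, and hence in a closed axis-aligned cube $C=[-R,R]^d$. I would first show that any collection of points in $T$ that is $\eps$-separated (meaning pairwise Euclidean distances strictly greater than $\eps$) must be finite: around each such point place an open Euclidean ball of radius $\eps/2$; these balls are pairwise disjoint and all contained in the bounded cube $C' = [-R - \eps/2, R + \eps/2]^d$, so by comparing Lebesgue volumes the number of points is at most $\mathrm{vol}(C')/\mathrm{vol}(B_{\eps/2})$, a finite quantity depending only on $d$, $R$, and $\eps$.

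Given this finiteness, I would construct $S$ greedily. Pick any $\textbf{s}_1\in T$, and having chosen $\textbf{s}_1,\dots,\textbf{s}_k\in T$, if there exists $\textbf{t}\in T$ with $||\textbf{t}-\textbf{s}_i||_2>\eps$ for every $i\le k$, set $\textbf{s}_{k+1}$ to be any such $\textbf{t}$; otherwise stop. At every stage the selected points form an $\eps$-separated subset of $T$, so by the previous paragraph the process terminates after finitely many steps, yielding a finite set $S=\{\textbf{s}_1,\dots,\textbf{s}_N\}\subseteq T$. By the stopping criterion, for every $\textbf{t}\in T$ there exists some $\textbf{s}_i\in S$ with $||\textbf{t}-\textbf{s}_i||_2\le \eps$, as required. (I interpret the $\eta$ appearing in the hypothesis as the same parameter as the $\eps$ used in the conclusion.)

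I do not anticipate any real obstacle: the only subtlety is the volume-packing argument ensuring finiteness of $\eps$-separated subsets of a bounded set, which is routine. An equally natural alternative would be to cover the bounding cube $C$ by a finite grid of closed subcubes of diameter at most $\eps$, and to pick one point of $T$ from each such subcube that meets $T$; I prefer the greedy version above since it delivers $S\subseteq T$ with no further bookkeeping.
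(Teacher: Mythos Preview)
Your proposal is correct and is essentially identical to the paper's proof: both bound the size of any $\eps$-separated subset of $T$ by a volume-packing argument (disjoint balls of radius $\eps/2$ inside a bounded region) and then take a maximal $\eps$-separated subset as the desired $S$. The only cosmetic differences are that the paper bounds $T$ by a Euclidean ball rather than a cube, and phrases the existence of a maximal $\eps$-separated set directly rather than via your greedy construction.
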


We remark that such a subset $S$ is called an $\eps$-net of $T$.

\begin{proof}
    Since $T$ is bounded, it is contained in the Euclidean ball of radius $R$ around $\textbf{0}$ for some $R>0$.
    Let us consider the family of all subsets $S\su T$ with the property that $||\textbf{s}-\textbf{s}'||_2> \eps$ for all distinct $\textbf{s},\textbf{s}'\in S$. We claim that each such subset $S$ has size $|S|\le (2R/\eps+1)^d$. Indeed, the Euclidean balls of radius $\eps/2$ around the points in $S$ are mutually disjoint and contained in the Euclidean ball of radius $R+\eps/2$ around $\textbf{0}$. Hence, for volume reasons, the set $S$ can consist of at most $(2R/\eps+1)^d$ points.
    
   Note that $S=\emptyset$ vacuously satisfies the property that $||\textbf{s}-\textbf{s}'||_2> \eps$ for all distinct $\textbf{s},\textbf{s}'\in S$. Hence, the family of subsets $S\su T$ with this property is non-empty, and as all its members satisfy $|S|\le (2R/\eps+1)^d$, there must be a maximal subset $S\su T$ in this family. So let $S\su T$ be a maximal subset with the property that we have $||\textbf{s}-\textbf{s}'||_2> \eps$ for all distinct $\textbf{s},\textbf{s}'\in S$.
  
    Now, let us check that for every point $\textbf{t}\in T$ there exists a point $\textbf{s}\in S$ with $||\textbf{s}-\textbf{t}||_2\le \eps$. If $\textbf{t}\in S$, we can choose $\textbf{s}=\textbf{t}$. If $\textbf{t}\not\in S$, then by the maximality of our chosen set $S$ we cannot add $\textbf{t}$ to the set. This means that we must have $||\textbf{s}-\textbf{t}||_2\le \eps$ for some $\textbf{s}\in S$, as desired.
\end{proof}

The following lemma states that close to any $B\in \B_d$ we can find some strictly convex $B'$. This lemma can also be deduced from a classical result of Klee \cite{klee} from 1959 which says that almost all norms on $\R^d$ are strictly convex.

\begin{lem}\label{lem:approximate-by-strictly-convex}
For every $B\in \B_d$ and every $\mu>0$, there exists a strictly convex $B' \in \B_d$ such that $d_H(B,B')\le \mu$.
\end{lem}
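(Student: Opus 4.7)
The plan is to approximate $B$ by the unit ball of a slightly perturbed norm obtained by adding a small multiple of the Euclidean norm. Let $\|.\|_B$ denote the norm whose unit ball is $B$. For a small parameter $\alpha > 0$ to be chosen later, I set
\[
\|\textbf{x}\|' := \|\textbf{x}\|_B + \alpha \|\textbf{x}\|_2,
\]
and let $B'$ be the unit ball of $\|.\|'$. Positive homogeneity, definiteness, and the (non-strict) triangle inequality are immediate from the corresponding properties of $\|.\|_B$ and $\|.\|_2$, so $\|.\|'$ is a norm and $B'\in \B_d$. Because the Euclidean norm is strictly convex, we have $\|\textbf{x}+\textbf{y}\|_2 < \|\textbf{x}\|_2 + \|\textbf{y}\|_2$ whenever $\textbf{x},\textbf{y}$ are non-zero vectors that are not real multiples of each other, and hence the same strict triangle inequality holds for $\|.\|'$ on such pairs. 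By the criterion recorded in the paragraph preceding Fact~\ref{fact-trans-degree}, this implies that $B'$ is strictly convex.

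It then remains to bound $d_H(B,B')$. Set $R := \max_{\textbf{x}\in B}\|\textbf{x}\|_2$, which is finite since $B$ is closed and bounded. Any $\textbf{x}\in B'$ satisfies $\|\textbf{x}\|_B \le \|\textbf{x}\|' \le 1$, giving $B'\subseteq B$; conversely, for any $\textbf{x}\in B$ we have $\|\textbf{x}\|' \le 1 + \alpha R$, so $\tfrac{1}{1+\alpha R}B\subseteq B'$. Thus $B'$ is sandwiched between two dilates of $B$, and a direct calculation using $\|\textbf{y}-\lambda \textbf{y}\|_2 = (1-\lambda)\|\textbf{y}\|_2 \le (1-\lambda)R$ for $\lambda \in (0,1)$ and $\textbf{y}\in B$ yields
\[
d_H(B,B') \;\le\; d_H\!\left(\tfrac{1}{1+\alpha R}B,\, B\right) \;\le\; \frac{\alpha R^2}{1+\alpha R},
\]
which is smaller than $\mu$ as soon as $\alpha$ is chosen sufficiently small.

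No serious obstacle is expected: the argument only relies on strict convexity of the Euclidean norm together with boundedness of $B$. The one conceptual point worth flagging is that the perturbation must be applied to the \emph{norm} rather than to the \emph{body}: a naive alternative such as the Minkowski sum $B + \alpha E$ with a small Euclidean ball $E$ in general fails to be strictly convex (for instance, a rounded square still carries straight line segments on its boundary), so the additive rounding has to live on the dual side, as above.
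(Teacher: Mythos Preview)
Your proposal is correct and follows essentially the same approach as the paper: both define $\|\textbf{x}\|'=\|\textbf{x}\|_B+\alpha\|\textbf{x}\|_2$, use strictness of the Euclidean triangle inequality to obtain strict convexity of $B'$, and bound $d_H(B,B')$ by scaling points of $B$ radially into $B'$. The only cosmetic difference is that you phrase the Hausdorff estimate via the sandwich $\tfrac{1}{1+\alpha R}B\subseteq B'\subseteq B$, whereas the paper computes the nearby point in $B'$ directly; the underlying calculation is the same.
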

\begin{proof}
Let us denote by $||.||$ the norm with unit ball $B$. Then $B=\{\textbf{x}\in \R^d \midd ||\textbf{x}||\le 1\}$. Since $B$ is bounded, we can choose some $c>0$ such that $||\textbf{x}||_2\le c$ for all $\textbf{b}\in B$. Let $\eps =\mu/c^2$.

Let us define a norm $||.||'$ by $||\textbf{x}||':=||\textbf{x}||+\eps||\textbf{x}||_2$ for all $\textbf{x}\in \R^d$. 
To check that $||.||'$ is indeed a norm, note that the triangle inequalities for $||.||$ and $||.||_2$ imply that
\[||\textbf{x}+\textbf{y}||'=||\textbf{x}+\textbf{y}||+\eps||\textbf{x}+\textbf{y}||_2\le ||\textbf{x}||+||\textbf{y}||+\eps||\textbf{x}||_2+\eps||\textbf{y}||_2= ||\textbf{x}||'+||\textbf{y}||',\]
for all $\textbf{x},\textbf{y}\in \R^d$, so the triangle inequality also holds for $||.||'$. Note in addition that we can have equality only if $||\textbf{x}+\textbf{y}||_2=||\textbf{x}||_2+||\textbf{y}||_2$ and since the triangle inequality is strict for $||.||_2$ for any non-zero vectors $\textbf{x},\textbf{y}\in \R^d$ which are not multiples of each other, the same also holds for $||.||'$. Thus, the unit ball $B'$ of the norm $||.||'$ is strictly convex.

Note that $B' \subseteq B$, so for any point $\textbf{b}'\in B'$ we have a point $\textbf{b}=\textbf{b}'\in B$ at Euclidean distance $0\le \mu$ from $\textbf{b}'$. On the other hand, we claim that for any point $\textbf{b}\in B$ we can also find a point $\textbf{b}'\in B'$ with Euclidean distance at most $\mu$ from $\textbf{b}$. If $||\textbf{b}||_2\le \mu$, we can simply take $\textbf{b}'=\textbf{0}\in B'$. If $||\textbf{b}||_2> \mu$, we have
\[\left\Vert\left(1-\frac{\mu}{||\textbf{b}||_2}\right) \textbf{b}\right\Vert'=\left(1-\frac{\mu}{||\textbf{b}||_2}\right) ||\textbf{b}||'=\left(1-\frac{\mu}{||\textbf{b}||_2}\right) (||\textbf{b}||+\eps ||\textbf{b}||_2) \le (1-\eps||\textbf{b}||_2) (1+\eps ||\textbf{b}||_2)<1,\]
using $ \eps =\mu/c^2\le \mu/(||\textbf{b}||_2)^2$ and $||\textbf{b}||\le 1$. So we can take $\textbf{b}':=\left(1-\frac{\mu}{||\textbf{b}||_2}\right) \textbf{b} \in B'$ and have $||\textbf{b}'-\textbf{b}||_2=\frac{\mu}{||\textbf{b}||_2}\cdot ||\textbf{b}||_2=\mu$. This shows that $d_H(B,B')\le\mu$, as desired.
\end{proof}

We will need the following lemma which tells us that we can approximate any $B \in \B_d$ with a polytope in $\B_d$ with small facets. We note there is plenty of research concerned with similar polytope approximation problems, see e.g.\ \cite{polytope-approx} and references therein, mostly concerned with minimising the number of facets needed to obtain a good approximation. Here, we are not concerned with the number of facets of the approximating polytope, but we need all of the facets to have small diameter. 

\begin{lem}\label{lem:approximating-polytope}
    For every $B\in \B_d$ and every $\mu>0$, there exists a bounded $\textbf{0}$-symmetric polytope $B'\in \B_d$ containing $\textbf{0}$ in its interior such that $d_H(B,B')<\mu$ and all facets of $B'$ have diameter less than $\mu$ (with respect to the Euclidean distance).
\end{lem}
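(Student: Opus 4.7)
The plan is to reduce to the strictly convex case via \Cref{lem:approximate-by-strictly-convex} and then construct $B'$ as a polytope cut out by supporting half-spaces of the strictly convex approximation along a fine symmetric net of outer normal directions. The reason this particular construction is forced on us (rather than using an arbitrary polytope approximation) is the facet-diameter condition: a flat region on the boundary of $B$ would give rise to a large facet in any Hausdorff-close polytope, so control of facet diameters must ultimately come from strict convexity.

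Concretely, apply \Cref{lem:approximate-by-strictly-convex} to produce a strictly convex $B_0 \in \B_d$ with $d_H(B,B_0) < \mu/2$, and let $h(\textbf{n}) := \max_{\textbf{y} \in B_0} \textbf{n}\cdot \textbf{y}$ denote the support function. Strict convexity of $B_0$ implies that for each $\textbf{n} \in S^{d-1}$ the supporting hyperplane $\{\textbf{x}: \textbf{n}\cdot \textbf{x} = h(\textbf{n})\}$ touches $B_0$ in a single point $\phi(\textbf{n})$, and uniqueness plus compactness of $\partial B_0$ implies the map $\phi : S^{d-1}\to \partial B_0$ is continuous. Using \Cref{lem-eps-net} on $S^{d-1}$ and symmetrizing by adjoining $-N$ to the resulting net, choose a symmetric finite $\delta$-net $N \su S^{d-1}$ and define
\[B' := \bigcap_{\textbf{n}\in N} \{\textbf{x} \in \R^d \mid \textbf{n}\cdot \textbf{x} \le h(\textbf{n})\}.\]
Since $N = -N$ and $h(\textbf{n}) = h(-\textbf{n})$ (using $\textbf{0}$-symmetry of $B_0$), $B'$ is a closed, convex, $\textbf{0}$-symmetric polytope containing $B_0$ and thus $\textbf{0}$ in its interior; for $\delta$ sufficiently small, the directions in $N$ span $\R^d$ well enough to make $B'$ bounded, so $B' \in \B_d$.

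It remains to pick $\delta$ so that $d_H(B_0,B') < \mu/2$ and every facet of $B'$ has diameter $<\mu$. For the Hausdorff bound, given $\textbf{b}' \in B'\setminus B_0$, let $\textbf{b}\in \partial B_0$ be its closest point in $B_0$. Then the unit vector $\textbf{m} := (\textbf{b}'-\textbf{b})/||\textbf{b}'-\textbf{b}||_2$ satisfies $h(\textbf{m}) = \textbf{m}\cdot \textbf{b}$, so $\textbf{m}\cdot \textbf{b}' - h(\textbf{m}) = ||\textbf{b}'-\textbf{b}||_2$; choosing $\textbf{n} \in N$ within $\delta$ of $\textbf{m}$ and using that $h$ is Lipschitz on the sphere forces $||\textbf{b}'-\textbf{b}||_2 = O(\delta)$. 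For the facet-diameter bound, the key is the following \emph{uniform} strengthening of strict convexity: for each $r>0$ there exists $\eta(r) > 0$ such that for every $\textbf{n}\in S^{d-1}$ and every $\textbf{x}$ with $\textbf{n}\cdot \textbf{x} = h(\textbf{n})$ and $||\textbf{x} - \phi(\textbf{n})||_2 \ge r$, one has $\dist(\textbf{x}, B_0) \ge \eta(r)$. Granted this, taking $\delta$ small enough that the Hausdorff step yields $\dist(\textbf{b}',B_0) < \eta(\mu/2)$ for every $\textbf{b}'\in B'$, every point of every facet of $B'$ must lie within Euclidean distance $\mu/2$ of the corresponding $\phi(\textbf{n})$, so each facet has diameter strictly less than $\mu$.

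The main obstacle is the uniform strict convexity claim above, since plain strict convexity gives only a pointwise bound. I would establish it by contradiction and compactness: a hypothetical sequence $(\textbf{n}_k,\textbf{x}_k)$ violating the uniform bound has, via compactness of $S^{d-1}$ and continuity of $\phi$, a convergent subsequence whose limit $\textbf{x}^*$ lies both in $B_0$ (as $\dist(\textbf{x}_k,B_0)\to 0$) and on the supporting hyperplane at $\phi(\textbf{n}^*)$, while still satisfying $||\textbf{x}^* - \phi(\textbf{n}^*)||_2 = r > 0$, contradicting the uniqueness of the touching point guaranteed by strict convexity of $B_0$.
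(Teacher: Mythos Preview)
Your proof is correct and takes a genuinely different route from the paper's. Both arguments first pass to a strictly convex approximation $B_0$ via \Cref{lem:approximate-by-strictly-convex}, but then diverge: the paper builds an \emph{inscribed} polytope $B'=\operatorname{conv}(S)$ from a fine $\eps$-net $S\subseteq B_0$, whereas you build a \emph{circumscribed} polytope as an intersection of supporting half-spaces of $B_0$ along a net of outer normals. Correspondingly, the mechanisms for bounding facet diameters differ. In the paper, if a facet of $\operatorname{conv}(S)$ were too large, the hyperplane $H$ through it would cut $B_0$ into two pieces each of definite depth (this is where compactness of the relevant family of hyperplanes is used to get a uniform lower bound $\eta>0$ on that depth), and the net $S$ would then have points in both open half-spaces of $H$, contradicting that $H$ supports $\operatorname{conv}(S)$. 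In your approach, each facet lies on a tangent hyperplane to $B_0$ at some $\phi(\textbf{n})$, and your uniform strict-convexity lemma forces every facet point (being Hausdorff-close to $B_0$) to lie near $\phi(\textbf{n})$. Your compactness argument for that lemma plays the same role as the paper's compactness argument for the function $f(H)$. Both proofs are of comparable length and difficulty; yours has the mild advantage that $B_0\subseteq B'$ makes one direction of the Hausdorff bound trivial, while the paper's has the mild advantage that the facet-diameter contradiction is slightly more direct (no need to track the tangent-point map $\phi$).
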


Note that we can endow the set of all $(d-1)$-dimensional hyperplanes in $\mathbb{R}^d$ with the natural topology induced by $\mathbb{R}^{d+1}$ when identifying a hyperplane described by an equation of the form $\textbf{a}\cdot \textbf{x}= b$ with $||\textbf{a}||_2=1$ with the point $(\textbf{a},b)$. More precisely, such a hyperplane with an equation of the form $\textbf{a}\cdot \textbf{x}= b$ corresponds to a point in the quotient space $(\textbf{a},b)\in (S^{d-1}\times \mathbb{R})/\{\pm 1\}$ (here $S^{d-1}$ is the $(d-1)$-dimensional unit sphere in $\R^d$ and we consider the quotient by the action of $\{\pm 1\}$ on $(\textbf{a},b)\in S^{d-1}\times \mathbb{R}$, where $-1$ acts by sending $(\textbf{a},b)\mapsto (-\textbf{a},-b)$).

We denote by $\dist_2(\textbf{p},H)$ the (Euclidean) distance from a point $\textbf{p}\in \R^d$ to a hyperplane $H\su \R^d$.

\begin{proof}
    By \Cref{lem:approximate-by-strictly-convex}, there exists a strictly convex $B''\in \B_d$ with $d_H(B'',B)<\mu/2$. It is now sufficient to show that there exists a bounded $\textbf{0}$-symmetric polytope $B'\in \B_d$ containing $\textbf{0}$ in its interior such that $d_H(B',B'')<\mu/2$ and all facets of $B'$ have diameter less than $\mu$
    
    Let $\mathcal{H}$ be the set of all $(d-1)$-dimensional hyperplanes $H$ in $\mathbb{R}^d$ such that $H\cap B''$ has diameter at least $\mu$. Note that $\mathcal{H}$ is a closed and bounded subset of the set of all $(d-1)$-dimensional hyperplanes in $\mathbb{R}^d$ (to see that $\mathcal{H}$ is closed, note that $\diam(H \cap B'')$ is a continuous function on the set of all $(d-1)$-dimensional hyperplanes $H\su \R^d$). Thus, $\mathcal{H}$ is compact in the induced topology. 
    
    Every hyperplane $H\in \mathcal{H}$ cuts the strictly convex set $B''$ into two parts $B''_1(H)$ and $B''_2(H)$ (strictly speaking these are the intersections of $B''$ with the two closed half-spaces bounded by $H$), neither of which is contained in $H$, by the strict convexity assumption on $B''$. For every hyperplane $H\in \mathcal{H}$, define
    \[f(H)=\min\left\{\max_{\textbf{b}\in B''_1(H)}\dist_2(\textbf{b},H),\max_{\textbf{b}\in B''_2(H)}\dist_2(\textbf{b},H)\right\},\]
    and note that $f(H)>0$ for every $H\in \mathcal{H}$. Now $f$ is a continuous function on a compact space and so it attains a minimum. So let $\eta>0$ be this minimum, then $f(H)\ge \eta$ for all $H\in \mathcal{H}$.

    Now, let us apply Lemma \ref{lem-eps-net} to the set $B''$ and $\eps=\min\{\mu,\eta\}/2$, i.e.\ let us choose an $\eps$-net $S\su B''$ of $B''$. By adding up to $d+1$ additional points to $S$, we may assume that $\textbf{0}$ is in the interior of the convex hull of $S$.

    We may furthermore assume that $S$ is $\textbf{0}$-symmetric, meaning that $-S=S$ (indeed, for every point $\textbf{s}\in S$ we may add the point $-\textbf{s}$ to $S$ if it is not already contained in $S$).
    
    Now, for every hyperplane $H\in \mathcal{H}$ there exist points in $S$ on both sides of $H$ (more precisely, the two open half-spaces bounded by $H$ both contain at least one point in $S$). Indeed, since $f(H)\ge \eta$, there exist points $\textbf{b}_1\in B''_1(H)\su B''$ and $\textbf{b}_2\in B''_2(H)\su B''$ with $\dist_2(\textbf{b}_1,H)\ge \eta$ and $\dist_2(\textbf{b}_2,H)\ge \eta$. Note that $\textbf{b}_1$ and $\textbf{b}_2$ are on opposite sides of the hyperplane $H$ (and not on $H$ itself). Now, we can choose points $\textbf{s}_1,\textbf{s}_2\in S$ with $||\textbf{s}_1-\textbf{b}_1||\le \eps\le\eta/2$ and $||\textbf{s}_2-\textbf{b}_2||\le \eps\le\eta/2$. Then $\textbf{s}_1$ must be on the same side of $H$ as $\textbf{b}_2$ (and not on $H$ itself), and similar for $\textbf{s}_2$. Thus, $\textbf{s}_1$ and $\textbf{s}_2$ lie on opposite sides of $H$ in the two open half-spaces bounded by $H$. So indeed for every $H\in \mathcal{H}$ the two open half-spaces bounded by $H$ both contain at least one point in $S$.

    Finally, let us define $B'=\operatorname{conv}(S)$ to be the convex hull of $S$. By our assumptions on $S$, the set $B'$ is a  bounded $\textbf{0}$-symmetric polytope containing $\textbf{0}$ in its interior. In particular, $B'$ is $d$-dimensional

    To check that $d_H(B'',B')\le \mu/2$, first note that $B'=\operatorname{conv}(S)\su \operatorname{conv}(B'')=B''$ (as $B''$ is a convex set). So we have $\sup_{\textbf{b}'\in B'}\inf_{\textbf{b}\in B''} ||\textbf{b}-\textbf{b}'||_2=0<\mu$. Furthermore, for every $\textbf{b}\in B''$ there is a point $\textbf{b}'\in S\su B'$ with $||\textbf{b}-\textbf{b}'||_2\le \eps\le \mu/2$ and hence $\sup_{\textbf{b}\in B''}\inf_{\textbf{b}'\in B'} ||\textbf{b}-\textbf{b}'||_2\le \mu/2$. This shows that indeed $d_H(B'',B')\le \mu/2$.

    Finally, it remains to check that the facets of $B'$ all have diameter less than $\mu$. So suppose that $B'$ had a facet of diameter at least $\mu$. Then for the $(d-1)$-dimensional hyperplane $H$ through this facet, the set $H\cap B''$ (which contains this facet) has diameter at least $\mu$. Hence $H\in \mathcal{H}$, but this means that the two open half-spaces bounded by $H$ both contain at least one point in $S\su B'$. This is a contradiction to the fact that $H$ is a hyperplane through a facet of $B'$. Hence the facets of $B'$ indeed all have diameter less than $\mu$.
\end{proof}

\begin{lem}\label{lem:Hausdorff-distance-boundary}
    Let $\delta>0$ and let $B,B'\in \B_d$ be such that $d_H(B,B')\le \delta$. Then for every point $\textbf{x}$ on the boundary of $B$ there exists a point $\textbf{y}$ on the boundary of $B'$ with $||\textbf{x}-\textbf{y}||_2\le \delta$.
\end{lem}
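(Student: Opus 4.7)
The plan is to split into cases according to where $\textbf{x}$ sits relative to $B'$; the genuinely delicate case, when $\textbf{x}$ lies in the interior of $B'$, will be handled by a supporting hyperplane argument.

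If $\textbf{x}\notin B'$, then since $B'$ is closed the Euclidean nearest point projection of $\textbf{x}$ onto $B'$ yields some $\textbf{y}\in\partial B'$, and directly from $\textbf{x}\in B$ and the definition of Hausdorff distance one has $\|\textbf{x}-\textbf{y}\|_2=\dist_2(\textbf{x},B')\le\delta$. If $\textbf{x}\in\partial B'$, simply take $\textbf{y}=\textbf{x}$.

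The remaining case, $\textbf{x}$ in the interior of $B'$, is the main one: the Hausdorff bound only supplies \emph{some} point of $B'$ within Euclidean distance $\delta$ of $\textbf{x}$, but that point could well lie in the interior of $B'$ rather than on its boundary. To produce a boundary point I would exploit the fact that $\textbf{x}\in\partial B$: pick a supporting hyperplane $H$ of the convex body $B$ at $\textbf{x}$ with outward unit normal $\textbf{n}\in\R^d$, so that $B\su\{\textbf{p}\in\R^d:\textbf{n}\cdot\textbf{p}\le \textbf{n}\cdot\textbf{x}\}$, and travel along the ray $\textbf{x}+t\textbf{n}$, $t\ge 0$. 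This ray starts in the interior of $B'$ and, since $B'$ is bounded, eventually leaves it, so by continuity there is some $t^*>0$ with $\textbf{y}:=\textbf{x}+t^*\textbf{n}\in\partial B'$. For any $\textbf{z}\in B$, the inequality $\textbf{n}\cdot\textbf{z}\le\textbf{n}\cdot\textbf{x}$ together with $\|\textbf{n}\|_2=1$ gives
\[\|\textbf{y}-\textbf{z}\|_2\ \ge\ \textbf{n}\cdot(\textbf{y}-\textbf{z})\ =\ t^*+\textbf{n}\cdot(\textbf{x}-\textbf{z})\ \ge\ t^*,\]
so $\dist_2(\textbf{y},B)\ge t^*$; but $\textbf{y}\in B'$ and $d_H(B,B')\le\delta$ force $\dist_2(\textbf{y},B)\le\delta$, whence $t^*\le\delta$ and hence $\|\textbf{x}-\textbf{y}\|_2=t^*\le\delta$, as required.

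The main obstacle (and the reason the lemma is not entirely trivial) is precisely this interior case: the naive idea of taking $\textbf{y}$ to be the intersection of the ray from $\textbf{0}$ through $\textbf{x}$ with $\partial B'$ need not give $\|\textbf{x}-\textbf{y}\|_2\le\delta$, since the Euclidean distance along that ray to $\partial B'$ can strictly exceed $d_H(B,B')$ (for instance when $B$ is a square and $\textbf{x}$ lies at the midpoint of an edge). The supporting hyperplane trick circumvents this by exploiting the local flatness of $B$ at $\textbf{x}$: pushing outward perpendicular to $H$ a Euclidean distance $t$ pushes $\textbf{y}$ a Euclidean distance at least $t$ from every point of $B$, forcing the first exit time from $B'$ to be controlled by the Hausdorff distance.
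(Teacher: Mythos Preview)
Your proof is correct and follows essentially the same three-case split as the paper's own argument, including the supporting-hyperplane trick for the interior case. The only cosmetic difference is that in the case $\textbf{x}\notin B'$ the paper intersects the segment from a nearby point of $B'$ to $\textbf{x}$ with $\partial B'$, whereas you take the Euclidean nearest-point projection; both land on $\partial B'$ for the same reason.
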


\begin{proof}
We distinguish three cases depending on the position of $\textbf{x}$ in relation to $B'$. First, if $\textbf{x}$ is on the boundary of $B'$, we can take $\textbf{y}=\textbf{x}$ to satisfy the statement in the lemma.

Next, assume that $\textbf{x}\not\in B'$. Then, as $d_H(B,B')\le \delta$, we have $\min_{\textbf{y}'\in B'}||\textbf{x}-\textbf{y}'||_2=\inf_{\textbf{y}'\in B'}||\textbf{x}-\textbf{y}'||_2\le \delta$, and so there exists a point $\textbf{y}'\in B'$ with $||\textbf{x}-\textbf{y}'||_2\le \delta$. Now, consider the straight line segment from $\textbf{y}'\in B'$ to $\textbf{x}\not\in B'$. This straight line segment must contain some point $\textbf{y}$ on the boundary of $B'$, and we have $||\textbf{x}-\textbf{y}||_2\le ||\textbf{x}-\textbf{y}'||_2\le\delta$.

Finally, assume that $\textbf{x}$ is in the interior of $B'$. Recalling that $\textbf{x}$ is on the boundary of $B$, let $H$ be a supporting hyperplane of $B$ through $\textbf{x}$ (this means that $B$ is contained in one of the closed half-spaces bounded by $H$ and $\textbf{x}$ is on $H$). Now consider the ray orthogonal to $H$, starting at $\textbf{x}$, pointing away from $B$. This ray needs to contain a point $\textbf{y}$ on the boundary of $B'$ (since the start of the ray at $\textbf{x}$ is in the interior of $B'$ and $B'$ is bounded). If $||\textbf{x}-\textbf{y}||_2> \delta$, then the closed Euclidean ball of radius $\delta$ around $\textbf{y}$ is disjoint from the half-space bounded by $H$ containing $B$. Hence $\inf_{\textbf{b}\in B}||\textbf{b}-\textbf{y}||_2=\min_{\textbf{b}\in B}||\textbf{b}-\textbf{y}||_2>\delta$, which contradicts our assumption $d_H(B,B')\le \delta$. So we must have $||\textbf{x}-\textbf{y}||_2\le \delta$.
\end{proof}

\section{Point sets with many special differences}
\label{S3}

The following two lemmas encapsulate the first part of our proofs of \Cref{thm:main,thm:distinct-distances}, respectively (see also the proof overview in \Cref{Sec:outline}). The first of these lemmas states, roughly speaking, that for a list of vectors $\mathbf{u}_1,\dots, \textbf{u}_k$ in some vector space $V$ over $\mathbb{Q}$ and any subset of $V$ where many differences are in the set $\{\pm \mathbf{u}_1, \dots, \pm \textbf{u}_k\}$, there must be a lot of linear dependencies among the vectors $\mathbf{u}_1,\dots, \textbf{u}_k$. More precisely, the span of some small subset of the vectors $\mathbf{u}_1,\dots, \textbf{u}_k$ must contain many other vectors among $\mathbf{u}_1,\dots, \textbf{u}_k$.

When we apply \Cref{lem:point-sets-key-lemma} in the next section to complete the proof of \Cref{thm:main}, we will take $\mathbf{u}_1,\dots, \textbf{u}_k$ to be the unit vectors appearing as unit distances according to some norm in a point set in $\R^d$. If there are many such unit distances, then the lemma implies that the span of some small subset of the vectors $\mathbf{u}_1,\dots, \textbf{u}_k$ must contain many other vectors among $\mathbf{u}_1,\dots, \textbf{u}_k$. In \Cref{S4} we show (as the second part of our proof) that this is a ``special'' property, meaning that most norms cannot have this property (more precisely, the set of norms with this property is a meagre set).

\begin{lem}\label{lem:point-sets-key-lemma}
    Let $V$ be a vector space over $\mathbb{Q}$, and let $\mathbf{u}_1,\dots, \mathbf{u}_k\in V$ be non-zero vectors in $V$.  Let $\mathbf{p}_1,\dots,\mathbf{p}_n\in V$ be distinct vectors, and let us consider the graph with vertex set $\{1,\dots,n\}$, where for any $x,y\in \{1,\dots,n\}$ we draw an edge between the vertices $x$ and $y$ if and only if $\mathbf{p}_x-\mathbf{p}_y\in \{\pm \mathbf{u}_1, \dots, \pm \mathbf{u}_k\}$. For some positive integer $d$, suppose that this graph has more than $\frac d2\cdot n \log n$ edges. Then there exists a subset $I\su \{1,\dots,k\}$, such that we have $\mathbf{u}_\ell\in \spn_{\mathbb{Q}}(\mathbf{u}_i\midd i\in I)$ for at least $d\cdot |I|+1$ indices $\ell\in \{1,\dots,k\}$.
\end{lem}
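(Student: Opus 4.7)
I will prove the contrapositive: assuming the hypothesis ``$|\mathrm{closure}(I)| \le d|I|$ for every $I\subseteq\{1,\ldots,k\}$'' (where $\mathrm{closure}(I) := \{\ell : \mathbf{u}_\ell\in\mathrm{span}_\mathbb{Q}(\mathbf{u}_i : i\in I)\}$), I will show $|E(G)| \le \tfrac{d}{2}\, n\log_2 n$. By translating each connected component of $G$ independently and using subadditivity of $x\mapsto x\log_2 x$, I may assume $P := \{\mathbf{p}_1,\ldots,\mathbf{p}_n\}\subseteq W := \mathrm{span}_\mathbb{Q}(\mathbf{u}_1,\ldots,\mathbf{u}_k)$. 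Next, I group the indices $\{1,\ldots,k\}$ into parallel classes $C_1,\ldots,C_m$, where two indices are equivalent iff their vectors are scalar multiples. Each class $C_j$ determines a one-dimensional subspace $L_j\subseteq W$, and applying the hypothesis to a singleton in $C_j$ gives $|C_j|\le d$. In its subspace form, the hypothesis reads ``$\sum_{L_j\subseteq F}|C_j|\le d\cdot\dim F$ for every subspace $F\subseteq W$''.

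\textbf{Entropy reduction.} Let $X$ be uniform on $P$, so $H(X) = \log_2 n$, and let $\Pi_j: W\to W/L_j$ denote the projection modulo $L_j$. Edges whose labels lie in $C_j$ stay inside a single fiber of $\Pi_j$ (each such fiber is a subset of an affine line parallel to $L_j$). On a fiber of size $s$, the edges with labels in $C_j$ use at most $|C_j|\le d$ one-dimensional shifts, and a short one-dimensional base case bounds their number by $\tfrac{|C_j|}{2}\, s\log_2 s$; indeed, the trivial bound $|E|\le |C_j|\cdot s$ already gives this for $s\ge 4$, and the cases $s\le 3$ are easily checked directly. Using the entropy identity $\sum_{\text{fibers }F} |F|\log_2 |F| = n\cdot H(X\mid \Pi_j(X))$ and summing over fibers and then over classes,
\[|E(G)| \;\le\; \frac{n}{2}\sum_{j=1}^{m} |C_j|\, H\bigl(X\mid \Pi_j(X)\bigr).\]
It therefore suffices to establish the weighted entropy inequality
\[\sum_{j=1}^{m} |C_j|\, H\bigl(X\mid \Pi_j(X)\bigr) \;\le\; d\cdot H(X) \;=\; d\log_2 n. \tag{$\star$}\]

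\textbf{Main step and obstacle.} Inequality $(\star)$ is a weighted, matroid-flavored Shearer-type bound whose hypothesis is exactly the subspace form of the matroid condition. When the subspaces $L_1,\ldots,L_m$ are linearly independent (so $m = r := \dim W$ and the $\Pi_j$ are coordinate projections in some basis $e_1,\ldots,e_r$ of $W$), the classical Han inequality $\sum_j H(X_j\mid X_{-j}) \le H(X)$ combined with $|C_j|\le d$ immediately delivers $(\star)$, and this case already includes the Boolean-cube tight example. The chief obstacle is extending $(\star)$ to the general (linearly dependent) case; my plan is a polymatroid/fractional-Shearer argument in which one picks a full flag $0 = F_0\subsetneq F_1\subsetneq\cdots\subsetneq F_r = W$, uses the chain-rule decomposition $H(X) = \sum_{s=1}^{r} H(X\bmod F_{s-1}\mid X\bmod F_s)$, and assigns each class $C_j$ to its first level $s(j) := \min\{s : L_j\subseteq F_s\}$, so that the cumulative matroid cap $\sum_{s(j)\le s}|C_j|\le ds$ combines with a Han-style estimate on the ``new coordinate'' added at each level to yield $(\star)$ after telescoping. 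Executing this polymatroid argument cleanly -- equivalently, deducing the weighted Shearer inequality $(\star)$ from the subspace matroid condition at \emph{all} ranks (not merely singletons) -- is the central technical step of the proof.
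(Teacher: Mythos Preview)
Your approach is genuinely different from the paper's and, modulo the gap you yourself flag at $(\star)$, it is correct. The paper proceeds by first using a greedy argument (their Lemma~3.5) to extract a single linearly independent subset $J\subseteq\{1,\ldots,k\}$ carrying at least a $1/d$ fraction of the total edge weight $\sum_i\lambda_i$, and then proving a direct combinatorial bound $e(G)\le\frac12 n\log_2 n$ for the subgraph on differences in $\{\pm\mathbf u_j:j\in J\}$ via a recursive fibre-splitting induction (their Lemma~3.3, powered by an entropy-flavoured inequality, Lemma~3.4). Your route instead keeps all $k$ directions in play and pushes the factor $d$ into the weighted entropy inequality $(\star)$. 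Both arguments are, at bottom, entropy arguments; the paper's recursion is Han's inequality in combinatorial clothing, applied after pigeonholing to one independent set, whereas you aim for a matroid-weighted Shearer bound applied globally.

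The good news is that $(\star)$ holds, and there is a cleaner proof than your flag/chain-rule sketch. The closure hypothesis $|\mathrm{closure}(I)|\le d|I|$ for all $I$ is equivalent to $|A|\le d\cdot r(A)$ for every $A$ in the linear matroid on $\{\mathbf u_1,\ldots,\mathbf u_k\}$, so Edmonds' matroid partition theorem gives a decomposition $\{1,\ldots,k\}=J_1\sqcup\cdots\sqcup J_d$ into $d$ linearly independent sets. For each $J_a$, extend $\{\mathbf u_i:i\in J_a\}$ to a basis of $W$; in those coordinates Han's inequality yields $\sum_{i\in J_a}H(X\mid\pi_i(X))\le H(X)$, where $\pi_i$ is the quotient by $\mathbb{Q}\mathbf u_i$. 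Summing over $a=1,\ldots,d$ and regrouping by parallel class (noting $\pi_i=\Pi_j$ whenever $i\in C_j$) gives exactly $(\star)$. This is in the spirit of the paper's own remark after Lemma~3.5, which observes that Edmonds--Fulkerson can replace the greedy step. Your flag approach may also be salvageable, but Edmonds $+$ Han is the straight line.

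One minor correction: in your fibre bound the trivial estimate should read $|E|\le|C_j|(s-1)$ (each shift vector yields a disjoint union of paths on the $s$ collinear points), and the inequality to check is $2(s-1)\le s\log_2 s$ for all $s\ge1$, which holds with equality at $s=2$.
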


We will prove Lemma \ref{lem:point-sets-key-lemma} later in this section, and then use it in the next section to prove \Cref{thm:main}. The following lemma will in a similar fashion be used to prove \Cref{thm:distinct-distances}. When applying this lemma, we will take $F$ to be the field extension of $\mathbb{Q}$ generated by all the distinct distances appearing in a given set of $n$ points according to some given norm.

\begin{lem}\label{lem:point-sets-lemma-for-distinct-distances}    
    Let $d\ge 1$ be an integer and let $0<\mu<1$. Suppose that $n$ is sufficiently large with respect to $d$ and $\mu$. Let $F\su \mathbb{R}$ be a subfield of $\mathbb{R}$, and let $V$ be a vector space over $\mathbb{R}$. Let $\mathbf{u}_1,\dots, \mathbf{u}_k\in V$ be non-zero vectors in $V$, and let $\mathbf{p}_1,\dots,\mathbf{p}_n\in V$ be distinct vectors such that not all of $\mathbf{p}_1,\dots,\mathbf{p}_n$ are lying on a common affine line in $V$ (as a vector space over $\mathbb{R}$). Suppose that for all $x,y\in \{1,\dots,n\}$ we have $\mathbf{p}_x-\mathbf{p}_y\in \spn_F(\mathbf{u}_i)$ for some $i\in \{1,\dots,k\}$. Then there exists a subset $I\su \{1,\dots,k\}$, such that we have $\mathbf{u}_\ell\in \spn_{F}(\mathbf{u}_i\midd i\in I)$ for at least $d\cdot |I|+(1-\mu)\cdot n+1$ indices $\ell\in \{1,\dots,k\}$.
\end{lem}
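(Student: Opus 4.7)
The plan is to take $I$ to be the index set of an $F$-basis of the $F$-subspace $W := \spn_F(\mathbf{p}_x - \mathbf{p}_y : x, y \in \{1,\dots,n\})$, so that $|I| = r := \dim_F W$, and then to lower-bound $|J|$ where $J := \{\ell \in \{1,\dots,k\} : \mathbf{u}_\ell \in W\}$. Every $\ell \in J$ satisfies $\mathbf{u}_\ell \in \spn_F(\mathbf{u}_i \mid i \in I)$, so the conclusion reduces to showing $|J| \ge dr + (1-\mu) n + 1$. Such an $I$ exists because for every $x$ one has $\mathbf{p}_x - \mathbf{p}_1 = \alpha_x \mathbf{u}_{i(x)}$ with $\alpha_x \in F \setminus \{0\}$, so $\mathbf{u}_{i(x)} \in W$ and these vectors $F$-span $W$; extract an $F$-basis among them. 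Moreover $r \ge 2$, since $r \le 1$ would force all points into a $1$-dimensional $F$-affine subspace, hence into a $1$-dimensional $\mathbb{R}$-affine subspace, contradicting the non-$\mathbb{R}$-collinearity hypothesis.

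I will then establish two complementary lower bounds on $|J|$. The first is $|J| \ge n-1$ via Ungar's theorem. Picking any three non-$\mathbb{R}$-collinear points among $\mathbf{p}_1, \dots, \mathbf{p}_n$, one builds a linear map $\pi : V \to \mathbb{R}^2$ under which the images of the $n$ points remain non-collinear. Ungar's theorem then gives at least $n-1$ distinct slopes in the image; these lift to distinct $\mathbb{R}$-directions among the pair differences $\mathbf{p}_x - \mathbf{p}_y$ (linear projections preserve $\mathbb{R}$-parallelism), and distinct $\mathbb{R}$-directions are automatically distinct $F$-directions since $F \subseteq \mathbb{R}$. Each such $F$-direction is of the form $\spn_F(\mathbf{u}_\ell)$ for some $\ell \in J$ by the hypothesis.

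The second bound is $|J| \ge \binom{r+1}{2}$. Here I choose $r+1$ points $\mathbf{p}_{x_1}, \dots, \mathbf{p}_{x_{r+1}}$ whose differences $\mathbf{f}_j := \mathbf{p}_{x_j} - \mathbf{p}_{x_1}$ (for $j = 2, \dots, r+1$) form an $F$-basis of $W$. Writing each pairwise difference as $\mathbf{p}_{x_i} - \mathbf{p}_{x_j} = \mathbf{f}_i - \mathbf{f}_j$ (with $\mathbf{f}_1 := \mathbf{0}$), an identity $\mathbf{f}_i - \mathbf{f}_j = \lambda(\mathbf{f}_{i'} - \mathbf{f}_{j'})$ with $\lambda \in F \setminus\{0\}$ and $\{i,j\} \ne \{i',j'\}$ would impose a nontrivial $F$-linear relation on $\mathbf{f}_2, \dots, \mathbf{f}_{r+1}$, which is ruled out by their independence (a short case analysis splitting on whether any of $i,j,i',j'$ equals $1$). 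Thus the $\binom{r+1}{2}$ pairwise differences occupy distinct $F$-directions, each contributing an index to $J$.

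Finally I combine the two bounds. If $r \le (\mu n - 2)/d$, the first gives $|J| \ge n - 1 \ge dr + (1-\mu) n + 1$. Otherwise $r > (\mu n - 2)/d$, and the second bound amounts to $\binom{r+1}{2} - dr = r(r - 2d + 1)/2 \ge (1-\mu) n + 1$, which holds provided $n$ is large in terms of $d$ and $\mu$: for $n \ge C d^2(1-\mu)/\mu^2$ (some absolute constant $C$), the lower bound $(\mu n - 2)/d$ on $r$ already exceeds the threshold $r_0 \approx \sqrt{2(1-\mu) n}$ at which the second bound starts to beat $dr + (1-\mu) n + 1$. The main obstacle is precisely this elementary threshold comparison guaranteeing the two regimes jointly cover every $r \ge 2$; the two lower bounds themselves reduce to a projection argument together with basic linear algebra.
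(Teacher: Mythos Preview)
Your argument is correct, and it is genuinely different from (and considerably simpler than) the paper's proof.

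The paper proceeds by contradiction: assuming that no subset $I$ works, it builds a greedy sequence of indices $j_1,\dots,j_r$ by repeatedly finding directions $\mathbf{u}_i$ admitting large forests on the residual point set $H_{s-1}$ (representatives in $V/\spn_F(\mathbf{u}_{j_1},\dots,\mathbf{u}_{j_{s-1}})$). Two claims bound $r$ and $|H_r|$, and then a pair-counting argument combined with the weighted greedy basis lemma (Lemma~3.5) and the edge bound $\frac{1}{2}n\log n$ (Lemma~3.3) yields a contradiction. Ungar's theorem enters only to ensure $k\ge n-1$.

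Your route is direct rather than by contradiction: you fix a single candidate $I$, namely an $F$-basis of $W=\spn_F(\mathbf{p}_x-\mathbf{p}_y)$, and verify the conclusion for this $I$ via two elementary lower bounds on $|J|$. The Ungar bound $|J|\ge n-1$ plays the same role as in the paper, but your second bound $|J|\ge \binom{r+1}{2}$, coming from the affinely $F$-independent configuration $\mathbf{p}_{x_1},\dots,\mathbf{p}_{x_{r+1}}$, is new and replaces the entire quotient/forest/Lemma~3.3/Lemma~3.5 machinery. The threshold dichotomy at $r\approx \mu n/d$ then cleanly covers both regimes. Your approach buys a substantially shorter proof that avoids Lemmas~3.3 and~3.5 altogether; the paper's approach, while heavier, reuses tools that were already developed for the companion Lemma~3.1 (the unit-distance lemma), which likely explains why the authors structured it that way. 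A minor cosmetic point: in your Ungar step you can simply restrict to the finite-dimensional real affine span of $\mathbf{p}_1,\dots,\mathbf{p}_n$ and apply Theorem~3.7 as stated, rather than constructing a projection to $\mathbb{R}^2$ (which would additionally require checking injectivity on the $n$ points).
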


The proofs of \Cref{lem:point-sets-key-lemma,lem:point-sets-lemma-for-distinct-distances} rely on the following lemma, which we prove first.

\begin{lem}\label{lem:point-sets-lin-independent}
    Let $V$ be a vector space over a field $F$, and let $\mathbf{u}_1,\dots, \mathbf{u}_k\in V$ be linearly independent vectors in $V$. For some $n\ge 1$, let $\mathbf{p}_1,\dots,\mathbf{p}_n\in V$ be distinct vectors, and let $G$ be a graph with vertex set $\{1,\dots,n\}$ satisfying the following two conditions
    \begin{compactitem}
        \item For every edge $xy$ of the graph $G$, we have $\mathbf{p}_x-\mathbf{p}_y\in \spn_F(\mathbf{u}_i)$ for some $i\in \{1,\dots,k\}$.
        \item For each $i\in \{1,\dots,k\}$, the subgraph of $G$ consisting of all edges $xy$ with $\mathbf{p}_x-\mathbf{p}_y\in \spn_F(\mathbf{u}_i)$ is a forest.
    \end{compactitem}
 Then the graph $G$ has at most $\frac12 \cdot n \log n$ edges.
\end{lem}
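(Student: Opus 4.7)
The plan is to prove this lemma by induction on $n$. The base case $n=1$ is immediate, since there are no edges and $\tfrac{1}{2}\cdot 1 \cdot \log 1 = 0$. For the inductive step with $n \geq 2$, the goal is to find a balanced bipartition $V(G) = A\sqcup B$ with $|A| = \lfloor n/2\rfloor$ and $|B| = \lceil n/2\rceil$ such that the number of edges between $A$ and $B$ is at most $\lfloor n/2\rfloor$. Both induced subgraphs $G[A]$ and $G[B]$ inherit the hypotheses of the lemma, since the restriction of each color-$i$ forest to a vertex subset is again a forest, and the $\mathbf{u}_i$ remain linearly independent. Applying the induction hypothesis then gives
\[
|E(G)| \;=\; |E(G[A])| + |E(G[B])| + |E(A,B)| \;\le\; \tfrac{1}{2}|A|\log|A| + \tfrac{1}{2}|B|\log|B| + \lfloor n/2\rfloor,
\]
and I claim this is at most $\tfrac{1}{2}n\log n$. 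Indeed, writing $p = |A|/n \in (0,1/2]$, the required inequality reduces to the binary-entropy bound $H(p) := -p\log p - (1-p)\log(1-p) \ge 2p$ on $[0,1/2]$, which holds with equality at $p=1/2$ and can be checked by elementary calculus.

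The heart of the argument is thus constructing such a partition. To do this I would exploit the following key observation, which leverages the linear independence of $\mathbf{u}_1,\ldots,\mathbf{u}_k$: within any connected component $T$ of any $F_i$, the only edges of $G$ both of whose endpoints lie in $T$ are color-$i$ edges. Indeed, an edge of a different color $j$ with both endpoints in $T$ would require a nonzero difference $\mathbf{p}_x-\mathbf{p}_y$ lying in both $\spn_F(\mathbf{u}_i)$ and $\spn_F(\mathbf{u}_j)$, but linear independence forces this intersection to be $\{\mathbf{0}\}$, contradicting distinctness of the $\mathbf{p}_x$. This gives a clean decomposition of the edges into ``within-component'' and ``between-component'' for each color, with considerable structural control. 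I would then pick a suitable color $i^*$ and carve $A$ out of the components of $F_{i^*}$---via a centroid-type decomposition of the trees of $F_{i^*}$ when a large tree must be split, or greedily when multiple smaller trees must be packed---so as to simultaneously balance $|A|$ and $|B|$ and control the number of cross edges contributed by each of the other colors.

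The main obstacle is verifying that this construction indeed yields at most $\lfloor n/2 \rfloor$ cross edges. The bound is tight: for the $(\log_2 n)$-dimensional hypercube, cutting along any single coordinate gives exactly $n/2$ cross edges of a single color, and no balanced split can do substantially better. Ruling out configurations in which many colors contribute cross edges simultaneously is the point where linear independence of the $\mathbf{u}_i$'s must really be used---any two components from distinct forests $F_i, F_j$ intersect in at most a single vertex, which I expect to leverage via a pigeonhole-style counting argument to obtain the bound on cross edges. This is the technical heart of the proof.
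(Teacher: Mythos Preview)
Your inductive framework and the entropy computation are both sound, and your key observation---that any two vertices in the same component of some $F_i$ can only be joined by a color-$i$ edge---is correct and useful. But the proof has a genuine gap at exactly the point you flag: you never construct the balanced bipartition with at most $\lfloor n/2\rfloor$ cross edges. The sketched ideas (centroid decomposition of a large tree in $F_{i^*}$, greedy packing of small trees, a ``pigeonhole-style'' bound on contributions from other colours) are not developed, and it is not at all clear that they can be made to work. The difficulty is that once you fix a colour $i^*$ and take $A$ to be (roughly) a union of components of $F_{i^*}$, you have no direct control over how many edges of the \emph{other} colours cross the cut; a random balanced cut would have on the order of $\frac14 n\log n$ cross edges, so the structure must be exploited in a very specific way, and you have not shown how.

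The paper's proof sidesteps this obstacle entirely by abandoning the requirement of a \emph{balanced} two-part split. Instead it fixes one colour, say $k$, and partitions the vertices into (possibly many, possibly very uneven) classes $W_a$ according to the $\mathbf{u}_k$-coordinate of $\mathbf{p}_w$ in the expansion over $\mathbf{u}_1,\dots,\mathbf{u}_k$. The payoff is that \emph{every} edge of colour $j\ne k$ automatically stays inside a single class $W_a$, so there are no cross edges to control from those colours; the only cross edges are colour-$k$ edges, and since the vertices of each $W_a$ lie in distinct components of the colour-$k$ forest, there are at most $n-\max_a|W_a|$ of them. The potentially uneven partition is then handled by an entropy-type inequality (the paper's Lemma~3.4), which plays the same role your binary-entropy bound would have played for a two-part split. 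This is the missing idea: rather than forcing balance and fighting to bound the cut, let the algebraic structure dictate the partition and accept imbalance, compensating with a sharper entropy inequality.
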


In the proof of this lemma, we will use the following simple inequality.

\begin{lem}\label{lem:entropy-like-inequality}
    For any positive integers $n_1\ge n_2\ge \dots\ge n_\ell\ge 1$ with $n=n_1+\dots+n_\ell$ (and $\ell\ge 1$), we have
    \[n_1-\frac{1}{2}\sum_{i=1}^{\ell} n_i\log n_i\ge n-\frac{1}{2}\cdot n\log n.\]
\end{lem}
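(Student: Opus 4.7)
My plan is to reduce the $\ell$-variable claim to a single two-variable inequality via a telescoping/merging argument. The key observation is that replacing the two largest terms $n_1, n_2$ by their sum $n_1 + n_2$ (producing the new sequence $(n_1+n_2, n_3, \ldots, n_\ell)$ of $\ell - 1$ parts, which is still non-increasing since $n_1 + n_2 \ge n_1 \ge n_3$) does not increase the left-hand side, provided one first establishes the two-variable inequality
\[
(a+b)\log(a+b) - a\log a - b\log b \;\ge\; 2b \quad \text{for all integers } a \ge b \ge 0. \tag{$\star$}
\]
Indeed, the decrease in the left-hand side under this single merge equals $\tfrac{1}{2}\big[(n_1+n_2)\log(n_1+n_2) - n_1\log n_1 - n_2\log n_2 - 2n_2\big]$, which is non-negative by ($\star$) applied with $a = n_1$, $b = n_2$.

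Iterating the merge $\ell - 1$ times produces the single-part sequence $(n)$, for which the left-hand side equals $n - \tfrac{1}{2}n\log n$, matching the target right-hand side exactly. Since each step weakly decreases the left-hand side, the original left-hand side is at least this final value, which is precisely what we wish to show. So it remains only to prove ($\star$).

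To prove ($\star$), I would fix $a > 0$ and set $f(b) := (a+b)\log(a+b) - a\log a - b\log b - 2b$ on $b \in [0, a]$, then verify the inequality by a routine one-variable calculus check. Direct computation gives $f(0) = 0$ and $f(a) = 2a \log 2 - 2a = 0$, while
\[
f'(b) = \log\!\left(\frac{a+b}{b}\right) - 2
\]
vanishes exactly once on $(0,a)$, namely at $b = a/3$, and is positive to the left of this point and negative to the right. Hence $f$ first increases then decreases on $[0,a]$ and, being zero at both endpoints, is non-negative throughout, proving ($\star$).

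The main (minor) subtlety is choosing the \emph{right} monotone operation: merging the two largest parts preserves the non-increasing ordering and yields a sign-definite change governed by the clean inequality $(\star)$. In contrast, a more naive approach that tries to bound $\sum_i n_i \log n_i$ directly in terms of $n_1$ and $n - n_1$ runs into trouble when $n_1$ is small compared to $n$, since the relevant one-variable inequality then fails; the merge-the-top-two viewpoint sidesteps this entirely.
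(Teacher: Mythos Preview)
Your proof is correct and follows essentially the same approach as the paper's: both argue by induction on $\ell$, merging two parts into one and reducing to the same two-variable inequality $(a+b)\log(a+b)-a\log a-b\log b\ge 2b$ for $a\ge b$. The only cosmetic differences are that the paper merges the largest and \emph{smallest} parts ($n_1$ and $n_\ell$) rather than the two largest, and verifies the two-variable inequality via the binary entropy function (observing it is equivalent to $t+\tfrac12 H(t)\ge 1$ for $t=a/(a+b)\in[\tfrac12,1]$) rather than by a direct calculus check.
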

\begin{proof}
Recall that the binary entropy function given by $H(t)= -t\log t-(1-t)\log (1-t)$ for $t\in (0,1)$ and $H(0)=H(1)=0$ is a concave function on the interval $[0,1]$. It intersects the line given by $2-2t$ at $t=\frac12$ and $t=1$, and so for $\frac12 \le t\le 1$ we have $H(t)\ge 2-2t$ and therefore $t+\frac{1}{2}H(t)\ge 1$.

Let us now prove the desired statement by induction on $\ell$. For $\ell=1$, we have $n_1=n$ and the statement is trivially true. So let us assume that $\ell\ge 2$ and that we already proved the lemma for $\ell-1$. Now, setting $n_1'=n_1+n_\ell\le 2n_1$, we have $\frac12 \le n_1/n_1'\le 1$ and therefore
\begin{align*}
n_1-\frac{1}{2}\cdot n_1\log n_1-\frac{1}{2}\cdot n_\ell\log n_\ell&=n_1-\frac{1}{2}\cdot n_1' \cdot \log n_1'-\frac{1}{2}\cdot n_1\cdot \log(n_1/n_1')-\frac{1}{2}\cdot n_\ell\cdot \log(n_\ell/n_1')\\
&=-\frac{1}{2}\cdot n_1' \cdot \log n_1'+n_1'\cdot \left(\frac{n_1}{n_1'}-\frac{1}{2}\cdot\frac{n_1}{n_1'}\cdot \log(n_1/n_1')-\frac{1}{2}\cdot\frac{n_\ell}{n_1'}\cdot \log (n_\ell/n_1')\right)\\
&=-\frac{1}{2}\cdot n_1' \cdot \log n_1'+n_1'\cdot \left(\frac{n_1}{n_1'}+\frac{1}{2}\cdot H(n_1/n_1')\right)\ge -\frac{1}{2}\cdot n_1' \cdot \log n_1'+n_1'.
\end{align*}
Thus,
\[n_1-\frac{1}{2}\sum_{i=1}^{\ell} n_i\log n_i\ge n_1'-\frac{1}{2}\cdot n_1' \log n_1'-\sum_{i=2}^{\ell-1}n_i\log n_i\ge  n-\frac{1}{2}\cdot n\log n,\]
where the last inequality follows from the induction hypothesis applied to $n_1'\ge n_2\ge\dots\ge n_{\ell-1}$ (noting that $n_1'+ n_2+\dots+ n_{\ell-1}=(n_1+n_\ell)+n_2+\dots+ n_{\ell-1}=n$).
\end{proof}

Now, we are ready to prove \Cref{lem:point-sets-lin-independent}.

\begin{proof}[ of \Cref{lem:point-sets-lin-independent}]
    We will prove the lemma by induction on $n$. If $n=1$, then the graph has $0=(1/2)\cdot 1\cdot \log 1$ edges.

    So let us now assume that $n\ge 2$, and that we have already proved the lemma for all smaller values of $n$. Let $G$ be a graph as in the statement of the lemma.

    If the graph $G$ is not connected, then we can divide it into two disconnected parts with $n_1\ge 1$ and $n_2\ge 1$ vertices (where $n_1+n_2=n$). By the induction assumption these parts have at most $(1/2)\cdot n_1 \log n_1$ and $(1/2)\cdot n_2 \log n_2$ edges, respectively. Hence, the total number of edges in $G$ is at most
    \[\frac{1}{2}\cdot n_1 \log n_1+\frac{1}{2}\cdot n_2 \log n_2\le \frac{1}{2}\cdot (n_1+n_2) \log n=\frac{1}{2}\cdot n \log n,\]
    as desired. So let us from now on assume that $G$ is connected.
    
    If $G$ has no edges, the desired conclusion trivially holds, so let us assume that $G$ has at least one edge $xy$. Then there exists some index $i\in \{1,\dots,k\}$ with $\mathbf{p}_x-\mathbf{p}_y\in \spn_F(\mathbf{u}_i)$. Upon relabelling, we may assume that $i=k$, i.e.\ we may assume that there exists at least one edge $xy$ in $G$ with $\mathbf{p}_x-\mathbf{p}_y\in \spn_F(\mathbf{u}_k)$.

    Let us now colour any edge $xy$ in $G$ red if we have $\mathbf{p}_x-\mathbf{p}_y\in \spn_F(\mathbf{u}_k)$. Then there is at least one red edge in $G$. We remark that by the second condition on the graph $G$ in the statement of \Cref{lem:point-sets-lin-independent} the red edges form a forest.

    Let us fix an arbitrary vertex $z\in \{1,\dots,n\}$. Since $G$ is connected, every vertex $w\in \{1,\dots,n\}$ can be reached by some path in $G$ starting at $z$. For every edge $xy$ along this path, we have $\mathbf{p}_x-\mathbf{p}_y\in \spn_F(\mathbf{u}_i)$ for some $i\in \{1,\dots,k\}$. Adding up $\mathbf{p}_x-\mathbf{p}_y$ for all edges $xy$ along the path now gives a representation $\mathbf{p}_w=\mathbf{p}_z+a_1\mathbf{u}_1+\dots+a_k\mathbf{u}_k$ with coefficients $a_1,\dots,a_k\in F$. As the vectors $\mathbf{u}_1,\dots, \textbf{u}_k\in V$ are linearly independent over $F$, for every given $w\in \{1,\dots,n\}$ there is a unique such representation. In particular, for every $w\in \{1,\dots,n\}$, the vector $\mathbf{p}_w$ lies in exactly one of the sets $\mathbf{p}_z+a\mathbf{u}_k+F\mathbf{u}_1+\dots+F\mathbf{u}_{k-1}$ for $a\in F$.

    This gives a partition of the vertices $w\in \{1,\dots,n\}$ into subsets $W_a$ for $a\in F$, where for each $a\in F$ the set $W_a$ consists of those vertices $w\in \{1,\dots,n\}$ such that $\mathbf{p}_w\in \mathbf{p}_z+a\mathbf{u}_k+F\mathbf{u}_1+\dots+F\mathbf{u}_{k-1}$. Note that $\sum_{a\in F} |W_a|=n$.

     If $xy$ is an edge in $G$ which is not red, then $x$ and $y$ must belong to the same set $W_a$. Indeed, we have $\mathbf{p}_x-\mathbf{p}_y\in \spn_F(\mathbf{u}_i)$ for some $i\in \{1,\dots,k\}$ and so if $\mathbf{p}_x\in \mathbf{p}_z+a\mathbf{u}_k+F\mathbf{u}_1+\dots+F\mathbf{u}_{k-1}$, then we have $\mathbf{p}_y\in \mathbf{p}_z+a\mathbf{u}_k+F\mathbf{u}_1+\dots+F\mathbf{u}_{k-1}$ as well. Hence, every non-red edge of $G$ is inside one of the induced subgraphs $G[W_a]$ for $a\in F$ with $W_a\neq \emptyset$.

     Recall that the red edges of $G$ form a forest. We claim that for every $a\in F$ the vertices in the set $W_a$ must all be in distinct connected components of the red forest. Indeed, suppose towards a contradiction that for some $a\in F$ there are two distinct vertices $w,w'\in W_a$ belonging to the same component of the red forest. Then $w$ and $w'$ can be connected by a path of red edges, and for all edges $xy$ on this path we have $\mathbf{p}_x-\mathbf{p}_y\in\spn_F(\mathbf{u}_k)$. Adding this up over all edges on the path between $w$ and $w'$, we conclude that $\mathbf{p}_w-\mathbf{p}_{w'}\in\spn_F(\mathbf{u}_k)$. On the other hand, since $w,w'\in W_a$, we have $\mathbf{p}_w,\mathbf{p}_{w'}\in \mathbf{p}_z+a\mathbf{u}_k+F\mathbf{u}_1+\dots+F\mathbf{u}_{k-1}$ and hence $\mathbf{p}_w-\mathbf{p}_{w'}\in\spn_F(\mathbf{u}_1,\dots,\mathbf{u}_{k-1})$. Since the vectors $\mathbf{u}_1,\dots, \textbf{u}_k\in V$ are linearly independent over $F$, this implies that $\mathbf{p}_w-\mathbf{p}_{w'}=\mathbf{0}$, which is a contradiction (as the points $\mathbf{p}_1,\dots,\mathbf{p}_n$ are distinct). So indeed, for each $a\in F$, the vertices in $W_a$ belong to distinct connected components of the red forest.

     Recalling that there is at least one red edge in $G$, the two endvertices of this edge must be in different sets $W_a$ and $W_{a'}$. Thus, at least two of the sets $W_a$ for $a\in F$ are non-empty, and since $\sum_{a\in F} |W_a|=n$, we have $|W_a|<n$ for all $a\in F$.
     
     We can now apply the induction hypothesis to each of the graphs $G[W_a]$ for $a\in F$ with $W_a\neq \emptyset$, and obtain that $e(G[W_a])\le \frac 12 \cdot |W_a|\cdot \log |W_a|$
     whenever $W_a\neq \emptyset$. Hence, the number of non-red edges in $G$ is at most
     \[\sum_{\substack{a\in F\\W_a\neq \emptyset}} \frac{1}{2}\cdot |W_a|\cdot \log |W_a|=\frac{1}{2} \sum_{\substack{a\in F\\W_a\neq \emptyset}} |W_a|\cdot \log |W_a|.\]

     Let $a^*\in F$ be such that $|W_{a^*}|$ is of maximal size among the sets $|W_a|$ for $a\in F$. Then we have $|W_a|\le |W_{a^*}|$ for all $a\in F$.
     
     We claim that the number of red edges in $G$ is at most $n- |W_{a^*}|$. Indeed, the vertices in $W_{a^*}$ are all in distinct components of the forest formed by the red edges. Hence, this forest has at least $|W_{a^*}|$ components and can therefore have at most $n- |W_{a^*}|$ edges.

     Thus, the total number of edges in $G$ (both red edges and non-red edges) is bounded by 
     \[e(G)\le n- |W_{a^*}|+\frac{1}{2} \sum_{\substack{a\in F\\W_a\neq \emptyset}} |W_a|\cdot \log |W_a|\le n-\left(n-\frac 12 \cdot n \log n\right)=\frac 12 \cdot n \log n, \]
    where the second inequality follows from \Cref{lem:entropy-like-inequality}. 
\end{proof}

\begin{rem}
The assertion of \Cref{lem:point-sets-lin-independent} is tight for every $n$ which is a power of two, as shown by a generic embedding of the graph of the $k$-dimensional hypercube. Indeed, for any linearly independent vectors $\textbf{u}_1,\dots,\textbf{u}_k$, we can take $\textbf{p}_1,\dots,\textbf{p}_n$ with $n=2^k$ to be all subset sums of $\{\textbf{u}_1,\dots,\textbf{u}_k\}$. Now letting $G$ be the $k$-dimensional hypercube graph (with $\frac{1}{2}\cdot 2^k k=\frac{1}{2}\cdot n\log n$ edges), all conditions in \Cref{lem:point-sets-lin-independent} are satisfied (each of the forests in the second condition is a perfect matching). For the proof of Theorem \ref{thm:main} we only need a special case of  Lemma \ref{lem:point-sets-lin-independent} in which $G$ consists only of edges $xy$ for which $\textbf{p}_x-\textbf{p}_y \in \{\pm \textbf{u}_1,\dots, \pm \textbf{u}_k\}$ (then each of the forests in the second condition in the lemma is a collection of vertex-disjoint paths). In this case, it is easy to see that the graph $G$ is a subgraph of the infinite graph of all integer sequences in which two sequences are adjacent if and only if they are equal in all coordinates but one, and in this coordinate they differ by one. The precise maximum possible number of edges of a subgraph of $n$ vertices of this graph is known by the isoperimetric inequality of Bollob\'as and Leader \cite[Theorem 15]{BL}. It implies the assertion of the lemma for this special case, and supplies also the tight best possible value for all values of $n$ and not only for powers of two.
\end{rem}

The second ingredient for the proof of \Cref{lem:point-sets-key-lemma,lem:point-sets-lemma-for-distinct-distances} is the following.

\begin{lem}
\label{lem:linear-algebra-greedy}
    Let $V$ be a vector space over a field $F$, and let $\mathbf{u}_1,\dots, \textbf{u}_k\in V$ be non-zero vectors in $V$.  Let $d\ge 1$ and $m\ge 0$ be integers, and suppose that for every subset $I\su \{1,\dots,k\}$, we have $\mathbf{u}_\ell\in \spn_{F}(\mathbf{u}_i\midd i\in I)$ for at most $d \cdot|I|+m$ indices $\ell\in \{1,\dots,k\}$. For some real number $M>0$, let $\lambda_1,\dots,\lambda_k$ be real numbers in the interval $[0,M]$. Then there exists a subset $J\su \{1,\dots,k\}$ such that the vectors $\mathbf{u}_j$ for $j\in J$ are linearly independent (over $F$) and such that \[\sum_{j\in J}\lambda_j\ge \frac{\lambda_1+\dots+\lambda_k-m\cdot M}{d}.\]
\end{lem}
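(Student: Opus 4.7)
The approach will be a weighted greedy algorithm on the linear matroid defined by $\mathbf{u}_1,\dots,\mathbf{u}_k$ over $F$. First I would reorder indices so that $\lambda_1\ge \lambda_2\ge \dots\ge \lambda_k$, and then construct $J$ greedily: starting with $J=\emptyset$, scan $i=1,\dots,k$ in order and add $i$ to $J$ whenever $\mathbf{u}_i\notin \spn_F(\mathbf{u}_j\mid j\in J)$. By construction the vectors $\mathbf{u}_j$ for $j\in J$ are linearly independent, so the only remaining task is to lower-bound $\sum_{j\in J}\lambda_j$.

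The hypothesis will be exploited on the growing prefixes of $J$. Write $J=\{j_1<\dots<j_s\}$ and $J^{(r)}=\{j_1,\dots,j_r\}$, and let $C_r$ denote the set of \emph{skipped} indices $\ell\notin J$ with $\ell<j_{r+1}$ (under the convention $j_{s+1}=k+1$). Any such skipped $\ell$ must lie in $\spn_F(J^{(r')})\subseteq\spn_F(J^{(r)})$ for some $r'\le r$ (otherwise the greedy would have added it), and the $r$ indices of $J^{(r)}$ trivially lie in $\spn_F(J^{(r)})$ as well. Applying the hypothesis to $I=J^{(r)}$ therefore yields $|C_r|+r\le dr+m$, i.e.\ $|C_r|\le (d-1)r+m$. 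The non-zero assumption on the $\mathbf{u}_i$ guarantees $j_1=1$ and in particular $s\ge 1$, so this degenerate case is avoided.

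Now I would split the skipped set into blocks $A_r=C_r\setminus C_{r-1}=\{\ell\notin J\mid j_r<\ell<j_{r+1}\}$, where $C_0=\emptyset$. Because the $\lambda_i$ are sorted, every $\ell\in A_r$ satisfies $\lambda_\ell\le \lambda_{j_r}$. Writing $|A_r|=|C_r|-|C_{r-1}|$ and applying Abel summation gives
\[\sum_{\ell\notin J}\lambda_\ell \le \sum_{r=1}^{s}|A_r|\,\lambda_{j_r}=\sum_{r=1}^{s-1}|C_r|(\lambda_{j_r}-\lambda_{j_{r+1}})+|C_s|\,\lambda_{j_s}\le (d-1)\sum_{r=1}^{s}\lambda_{j_r}+m\lambda_{j_1}\le (d-1)\sum_{j\in J}\lambda_j+mM,\]
where the penultimate inequality plugs in $|C_r|\le (d-1)r+m$ and telescopes, and the last uses $\lambda_{j_1}\le M$. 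Adding $\sum_{j\in J}\lambda_j$ to both sides shows $\sum_{i=1}^k\lambda_i\le d\sum_{j\in J}\lambda_j+mM$, which rearranges to the desired inequality.

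I do not anticipate a serious obstacle: the matroid hypothesis gives a clean linear bound on the prefix counts $|C_r|$, and Abel summation (combined with the sorted order of the $\lambda_{j_r}$) repackages these into the weighted inequality. The main thing to be careful about is the telescoping step and the bookkeeping that $r$ must not be double-counted between $J^{(r)}$ and $C_r$.
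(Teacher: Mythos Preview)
Your proof is correct and follows essentially the same approach as the paper: both sort the $\lambda_i$ in decreasing order, build $J$ by the same greedy scan, and use the hypothesis on the prefix sets $J^{(r)}$ to obtain the equivalent bounds $|C_r|\le (d-1)r+m$ (your formulation) and $j_s\le (s-1)d+m+1$ (the paper's). The only cosmetic difference is in the final summation step: the paper compares each $\lambda_{j_s}$ to a block of $d$ consecutive $\lambda$'s starting at index $(s-1)d+m+1$, while you use Abel summation on the skipped weights; both unwind to the same inequality $\sum_i\lambda_i\le d\sum_{j\in J}\lambda_j+mM$.
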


\begin{proof}
    Upon relabelling, we may assume without loss of generality that $\lambda_1\ge \lambda_2\ge \dots\ge \lambda_k$. Let us now construct a sequence of distinct indices $j_1,\dots,j_r\in \{1,\dots,k\}$ as follows. For any $s\ge 1$, having already chosen the indices $j_1,\dots,j_{s-1}$, choose $j_s$ to be the minimum index such that $\mathbf{u}_{j_s}\not\in \spn_{F}(\mathbf{u}_{j_1},\dots, \mathbf{u}_{j_{s-1}})$ if such an index exists. If such an index does not exist, i.e.\ if $\mathbf{u}_{1},\dots,\mathbf{u}_k\in \spn_{F}(\mathbf{u}_{j_1},\dots, \mathbf{u}_{j_{s-1}})$, let us terminate the sequence $j_1,\dots,j_{s-1}$  without choosing any further terms by setting $r=s-1$. Note that when the sequence terminates we have $\mathbf{u}_{1},\dots,\mathbf{u}_k\in \spn_{F}(\mathbf{u}_{j_1},\dots, \mathbf{u}_{j_{r}})$. By our assumption applied to $I=\{j_1,\dots,j_r\}$, this implies that $k\le d \cdot |I| +m= d \cdot r+m$.

    Since  we have $\mathbf{u}_{j_i}\not\in \spn_{F}(\mathbf{u}_{j_1},\dots, \mathbf{u}_{j_{i-1}})$ for $i=1,\dots,m$, the vectors $\mathbf{u}_{j_1},\dots, \mathbf{u}_{j_m}$ are linearly independent.

    Now, we claim that for $s=1,\dots,r$, we have $j_s\le (s-1)d+m+1$. Indeed, suppose that for some $1\le s\le m$, we had $j_s>(s-1)d+m+1$. By the choice of $j_s$, this would mean that $\mathbf{u}_{1},\dots,\mathbf{u}_{(s-1)d+m+1}\in \spn_{F}(\mathbf{u}_{j_1},\dots, \mathbf{u}_{j_{s-1}})$. But this contradicts our assumption for the set $I=\{j_1,\dots,j_{s-1}\}$. Hence, we must indeed have $j_s\le (s-1)d+m+1$ for $s=1,\dots,r$.

    Let us now define $J=\{j_1,\dots,j_r\}$. Then the vectors $\mathbf{u}_j$ for $j\in J$, i.e.\ the vectors $\mathbf{u}_{j_1},\dots, \mathbf{u}_{j_r}$, are linearly independent. Furthermore, defining $0=\lambda_{k+1}=\lambda_{k+2}=\dots$ for notational convenience, we have
    \[d\cdot \sum_{j\in J}\lambda_j= d\cdot \sum_{s=1}^{r}\lambda_{j_s}\ge d\cdot \sum_{s=1}^{r}\lambda_{(s-1)d+m+1}\ge \sum_{s=1}^{r}(\lambda_{(s-1)d+1+m}+\dots+\lambda_{sd+m})=\lambda_{m+1}+\lambda_{m+2}+\dots+\lambda_{rd+m}\]
    Here, we used that $\lambda_1\ge \lambda_2\ge \dots$ and $j_s\le (s-1)d+m+1$ for $s=1,\dots,r$. Recalling $k\le rd+m$  and $0=\lambda_{k+1}=\lambda_{k+2}=\dots$, we can conclude that
    \[d\cdot \sum_{j\in J}\lambda_j\ge \lambda_{m+1}+\lambda_{m+2}+\dots+\lambda_{rd+m}=\lambda_{1}+\dots+\lambda_{k}-(\lambda_1+\dots+\lambda_m)\ge \lambda_{1}+\dots+\lambda_{k}-m\cdot M.\]
    Rearranging now gives $\sum_{j\in J}\lambda_j\ge (\lambda_1+\dots+\lambda_k-mM)/d$, as desired.
\end{proof}

\begin{rem}
The assertion of the \Cref{lem:linear-algebra-greedy} can also be established in a shorter way by applying the matroid partition theorem of Edmonds and Fulkerson \cite{EF}. To do so, define $d+1$ matroids $M_1,\dots,M_{d+1}$ on the set of vectors $\{\textbf{u}_1,\dots,\textbf{u}_k\}$. Each of the first $d$ matroids $M_1,\dots,M_d$ is the usual linear independence matroid: a set of vectors is independent if and only if it is linearly independent. In the last matroid $M_{d+1}$, a set is independent if and only if it contains at most $m$ vectors. If every subset $I$ of the vectors spans at most $d \cdot |I|+m$ of the vectors, then for every set of vectors $J$ the sum of ranks of $J$ according to the $d+1$ matroids $M_1,\dots,M_{d+1}$ is at least $|J|$. By the Edmonds--Fulkerson Theorem this implies that the set $\{\textbf{u}_1,\dots,\textbf{u}_k\}$ can be partitioned into $d+1$  subsets, where one of the subsets is of size at most $m$ and all others are linearly independent sets of vectors. This clearly supplies the conclusion of the lemma.
\end{rem}

Now, let us prove \Cref{lem:point-sets-key-lemma}, 
using \Cref{lem:point-sets-lin-independent,lem:linear-algebra-greedy}.

\begin{proof}[ of \Cref{lem:point-sets-key-lemma}]
    Suppose towards a contradiction that the desired subset $I\su \{1,\dots,k\}$ does not exist. Then for every subset $I\su \{1,\dots,k\}$, we have $\mathbf{u}_\ell\in \spn_{\mathbb{Q}}(\mathbf{u}_i\midd i\in I)$ for at most $d\cdot |I|$ indices $\ell\in \{1,\dots,k\}$. Let $G'$ be the graph with vertex set $\{1,\dots,n\}$ as in the statement of Lemma \ref{lem:point-sets-key-lemma} (where for any $x,y\in \{1,\dots,n\}$ we draw an edge between the vertices $x$ and $y$ if and only if $\mathbf{p}_x-\mathbf{p}_y\in \{\pm \mathbf{u}_1, \dots, \pm \textbf{u}_k\}$). Furthermore, for $i=1,\dots,k$, let $\lambda_i$ be the number of edges $xy$ of $G'$ with $\mathbf{p}_x-\mathbf{p}_y\in \{\pm \mathbf{u}_i\}$. Then $\lambda_1+\dots+\lambda_k\ge e(G')>\frac d2 \cdot n \log n$ by our assumption on the number of edges of $G'$.

    Now, by Lemma \ref{lem:linear-algebra-greedy} applied with $F=\mathbb{Q}$ and $m=0$ there exists a subset $J\su \{1,\dots,k\}$ such that the vectors $\mathbf{u}_j$ for $j\in J$ are linearly independent over $\mathbb{Q}$ and such that $\sum_{j\in J}\lambda_j\ge (\lambda_1+\dots+\lambda_k)/d$. We may assume without loss of generality that $J=\{1,\dots,k'\}$ for some $k'\in \{1,\dots,k\}$ (otherwise, relabel the indices). Then the vectors $\mathbf{u}_1,\dots,\mathbf{u}_{k'}$ are linearly independent and we have $\lambda_1+\dots+\lambda_{k'}\ge (\lambda_1+\dots+\lambda_k)/d>\frac 12 \cdot n \log n$.

    Let us now apply Lemma \ref{lem:point-sets-lin-independent} to $F=\mathbb{Q}$, the linearly independent vectors $\mathbf{u}_1,\dots,\mathbf{u}_{k'}\in V$, the same $\mathbf{p}_1,\dots,\mathbf{p}_n\in V$ as before, and the graph $G$ obtained from $G'$ by only taking the edges $xy$ with $\mathbf{p}_x-\mathbf{p}_y\in \{\pm \mathbf{u}_1, \dots, \pm \textbf{u}_{k'}\}$. Note that this graph $G$ satisfies the two conditions in \Cref{lem:point-sets-lin-independent}. Indeed, for every edge $xy$ of $G$ we have $\mathbf{p}_x-\mathbf{p}_y\in \{\pm \mathbf{u}_i\}\su \spn_{\mathbb{Q}}(\mathbf{u}_i)$ for some $i\in \{1,\dots,k'\}$. Furthermore, for every $i\in \{1,\dots,k'\}$, the edges $xy$ with $\mathbf{p}_x-\mathbf{p}_y\in \spn_{\mathbb{Q}}(\mathbf{u}_i)$ are precisely the edges with $\mathbf{p}_x-\mathbf{p}_y\in \{\pm \mathbf{u}_i\}$ (as $\mathbf{u}_1,\dots,\mathbf{u}_{k'}$ are linearly independent, we cannot have $\mathbf{p}_x-\mathbf{p}_y\in \spn_{\mathbb{Q}}(\mathbf{u}_i)$ if $\mathbf{p}_x-\mathbf{p}_y\in \{\pm \mathbf{u}_j\}$ for $j\neq i$), and these edges form a vertex-disjoint collection of paths and hence a forest. Thus, by Lemma \ref{lem:point-sets-lin-independent} the graph $G$ has at most $\frac 12 \cdot n \log n$ edges. On the other hand, the number of edges of $G$ is $\lambda_1+\dots+\lambda_{k'}>\frac12 \cdot n \log n$, since for every $i=1,\dots,k'$ there are $\lambda_i$ edges $xy$ in $G$ with $\mathbf{p}_x-\mathbf{p}_y\in \{\pm \mathbf{u}_i\}$. This is a contradiction, finishing the proof of Lemma \ref{lem:point-sets-key-lemma}.
\end{proof}

For the proof of Lemma \ref{lem:point-sets-lemma-for-distinct-distances}, we also use the following result, proved by Ungar \cite{ungar} which states that for any $n$ distinct points in $\mathbb{R}^t$, not all on a common line, the pairs of points define at least $n-1$ different line directions.

\begin{thm}[\cite{ungar}]\label{thm-ungar}
Given $n$ distinct points $\mathbf{p}_1,\ldots,\mathbf{p}_n\in \mathbb{R}^t$, not all on one common line, there are at least $n-1$ different one-dimensional linear subspaces of $\mathbb{R}^t$ that are of the form $\spn_{\mathbb{R}}(\mathbf{p}_x-\mathbf{p}_y)$ with $1\le x<y\le n$.
\end{thm}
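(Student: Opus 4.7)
The plan is to reduce to the planar case $t=2$, which is the classical Scott conjecture resolved by Ungar \cite{ungar}, and then invoke that result. Concretely, in the planar case Ungar proved the stronger bound of $2\lfloor n/2 \rfloor$ distinct directions, which is at least $n-1$ for both even and odd $n$.

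For the reduction, I would pick a linear projection $\pi\colon \R^t \to \R^2$ satisfying: (i) $\pi(\mathbf{p}_1),\dots,\pi(\mathbf{p}_n)$ are pairwise distinct, (ii) they are not all collinear in $\R^2$, and (iii) the induced map on the finite set of directions $\{\spn_{\R}(\mathbf{p}_x-\mathbf{p}_y) : 1\le x<y\le n\}$ is injective. Each of these conditions fails only on a proper algebraic subvariety of the Grassmannian of $(t-2)$-dimensional subspaces of $\R^t$ (which parametrises such $\pi$ via $\ker\pi$): for (i), $\ker\pi$ must avoid finitely many lines $\spn_{\R}(\mathbf{p}_x-\mathbf{p}_y)$; for (ii), $\ker\pi$ must not contain the affine hull of the $\mathbf{p}_i$; and for (iii), $\ker\pi$ must meet each of the finitely many $2$-planes $\spn_{\R}(\mathbf{p}_x - \mathbf{p}_y,\, \mathbf{p}_{x'} - \mathbf{p}_{y'})$ (for pairs with distinct directions) only at the origin, which is the generic behaviour since $(t-2) + 2 = t$. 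A generic $\pi$ therefore satisfies all three conditions. Applying the planar case to $\pi(\mathbf{p}_1),\dots,\pi(\mathbf{p}_n)$ then produces at least $n-1$ distinct directions in $\R^2$, which by (iii) lift back to $n-1$ distinct directions in $\R^t$.

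For the planar case, the approach is Ungar's original argument via the Goodman--Pollack theory of allowable sequences. One associates to the configuration $\mathbf{p}_1,\dots,\mathbf{p}_n\in \R^2$ a circular sequence of permutations of $\{1,\dots,n\}$ by continuously rotating a directed line $\ell$ through angle $\pi$ and recording the ordering of the orthogonal projections of the $\mathbf{p}_i$ onto $\ell$. Each distinct direction $\spn_{\R}(\mathbf{p}_x-\mathbf{p}_y)$ corresponds to exactly one ``time'' in the rotation at which some adjacent entries of the permutation are swapped (simultaneous swaps at a single time correspond to concurrent or parallel difference directions). Since every pair $(x,y)$ contributes exactly one transposition during the half-rotation, the total number of transpositions equals $\binom{n}{2}$. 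Ungar's delicate parity analysis of such allowable sequences then shows that the number of distinct swap times is at least $2\lfloor n/2\rfloor\ge n-1$.

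The main obstacle is the planar case itself, whose proof via allowable sequences is combinatorially intricate and, to my knowledge, has no substantially shorter alternative; by contrast, the reduction from $\R^t$ to the plane is routine via the standard genericity argument sketched above.
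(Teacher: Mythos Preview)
The paper does not give its own proof of this theorem: it is simply quoted from Ungar \cite{ungar}, with the remark that Ungar in fact proved the sharper bound $2\lfloor n/2\rfloor$. So there is nothing in the paper to compare against beyond the citation.

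Your outline is a correct way to establish the stated $\R^t$ version: Ungar's original result is planar, and the generic-projection reduction you sketch is the standard way to pass from $\R^t$ to $\R^2$. Conditions (i) and (iii) are stated accurately. Your phrasing of (ii) is slightly off: the failure of (ii) is not that $\ker\pi$ ``contains the affine hull'' of the $\mathbf{p}_i$ (an affine subspace need not pass through the origin), but rather that $\ker\pi$ meets the linear span $V=\spn_{\R}(\mathbf{p}_i-\mathbf{p}_1:i)$ in a subspace of dimension at least $\dim V-1$. Since $\dim V\ge 2$, for a generic $(t-2)$-dimensional $\ker\pi$ one has $\dim(\ker\pi\cap V)=\max(0,\dim V-2)$ and hence $\dim\pi(V)=2$, so the images are non-collinear. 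This is a wording issue, not a gap; the genericity argument goes through.

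Your description of the planar step via Goodman--Pollack allowable sequences is an accurate high-level summary of Ungar's proof, and you are right that no substantially simpler argument is known.
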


We remark that Ungar's result is actually slightly stronger, namely in the case of even $n$ he proved that there are at least $n$ (and not only $n-1$) different line directions (and in this stronger version, his bounds are tight). Finally, let us prove Lemma \ref{lem:point-sets-lemma-for-distinct-distances}, using \Cref{lem:point-sets-lin-independent,lem:linear-algebra-greedy} and \Cref{thm-ungar}.

\begin{proof}[ of Lemma \ref{lem:point-sets-lemma-for-distinct-distances}]
Setting $m=\lceil (1-\mu)\cdot n\rceil\le n$, we want to prove that there is a subset $I\su \{1,\dots,k\}$ with $\mathbf{u}_\ell\in \spn_{F}(\mathbf{u}_i\midd i\in I)$ for at least $d\cdot |I|+m+1$ indices $\ell\in \{1,\dots,k\}$. Suppose towards a contradiction that the desired subset $I\su \{1,\dots,k\}$ does not exist. Then for every subset $I\su \{1,\dots,k\}$, we have $\mathbf{u}_\ell\in \spn_{F}(\mathbf{u}_i\midd i\in I)$ for at most $d\cdot |I|+m$ indices $\ell\in \{1,\dots,k\}$.

We may assume that for every $i\in \{1,\dots,k\}$ there exist distinct $x,y\in \{1,\dots,n\}$ with $\mathbf{p}_x-\mathbf{p}_y\in \spn_{F}(\mathbf{u}_i)$ since otherwise, we can just omit all indices $i$ for which this is not the case, and relabel the remaining indices.

By Theorem \ref{thm-ungar}, there are at least $n-1$ different line directions in $\spn_\R(\mathbf{p}_1,\dots,\mathbf{p}_n)\su V$ appearing among the differences $\mathbf{p}_x-\mathbf{p}_y$ with $1\le x<y\le n$. For each of these differences we have $\mathbf{p}_x-\mathbf{p}_y\in \spn_F(\mathbf{u}_i)$ for some $i\in \{1,\dots,k\}$.
Hence, there must be at least $n-1$ different vectors $\mathbf{u}_i$, so $k\ge n-1$.

Let us now construct a sequence of distinct indices $j_1,\dots,j_r\in \{1,\dots,k\}$ recursively as follows. Let $s\ge 1$, and assume that we have already chosen the indices $j_1,\dots,j_{s-1}$. Let us now look at the image of the point set $\{\mathbf{p}_1,\dots,\mathbf{p}_n\}\su V$ under the projection map $V\to V/\spn_F(\mathbf{u}_{j_1},\dots,\mathbf{u}_{j_{s-1}})$. For every point in this image, let us pick a preimage $\mathbf{p}_h$ and let $H_{s-1}\su \{1,\dots,n\}$ be the resulting set of indices $h\in \{1,\dots,n\}$ for these preimages. Then $|H_{s-1}|$ is precisely the size of the image of $\{\mathbf{p}_1,\dots,\mathbf{p}_n\}$ in $V/\spn_F(\mathbf{u}_{j_1},\dots,\mathbf{u}_{j_{s-1}})$ and for every $x\in \{1,\dots,n\}$ there is exactly one $h\in H_{s-1}$ with $\mathbf{p}_x-\mathbf{p}_h\in \spn_F(\mathbf{u}_{j_1},\dots,\mathbf{u}_{j_{s-1}})$. If there exists an index $i\in \{1,\dots,k\}$ for which there is a forest on the vertex set $H_{s-1}$ with at least $\frac{3d}{\mu}$ edges such that for every edge $xy$ of the forest we have $\mathbf{p}_x-\mathbf{p}_y\in \spn_F(\mathbf{u}_i)$, then let us pick such an index $i$ and define $j_s=i$. If there is no such index $i\in \{1,\dots,k\}$, let us terminate the sequence $j_1,\dots,j_{s-1}$ without choosing any further terms by setting $r=s-1$.

Recall that for $s=0,\dots,r$, we defined $H_s\su \{1,\dots,n\}$ such that $|H_s|$ is the size of the image of the point set $\{\mathbf{p}_1,\dots,\mathbf{p}_n\}\su V$ under the projection map $V\to V/\spn_F(\mathbf{u}_{j_1},\dots,\mathbf{u}_{j_{s}})$, and such that for every $x\in \{1,\dots,n\}$ there is exactly one $h\in H_{s}$ with $\mathbf{p}_x-\mathbf{p}_h\in \spn_F(\mathbf{u}_{j_1},\dots,\mathbf{u}_{j_{s}})$. In particular, we have $n=|H_0|\ge |H_1|\ge \dots\ge |H_r|\ge 1$. The following claim gives an upper bound on the length of our sequence $j_1,\dots,j_r$.

\begin{claim}\label{claim-bound-on-r} We have $r\le \frac{\mu}{3d}\cdot n$.
\end{claim}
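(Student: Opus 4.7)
The plan is to show that each new index added to the sequence shrinks $|H_s|$ by at least $\frac{3d}{\mu}$ compared to $|H_{s-1}|$. Since $|H_0|=n$ and $|H_r|\ge 1$, iterating $r$ times then gives $r\cdot \frac{3d}{\mu}\le n-1$, which rearranges to $r\le \frac{\mu n}{3d}$, as required.

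To prove the per-step drop, I would fix $s\in \{1,\dots,r\}$ and consider the forest $T_s$ on vertex set $H_{s-1}$ with at least $\frac{3d}{\mu}$ edges guaranteed by the choice of $j_s$. The first observation is that any two vertices in the same connected component of $T_s$ have identical images under the projection $\pi_s\colon V\to V/\spn_F(\mathbf{u}_{j_1},\dots,\mathbf{u}_{j_s})$: each edge difference lies in $\spn_F(\mathbf{u}_{j_s})\subseteq \spn_F(\mathbf{u}_{j_1},\dots,\mathbf{u}_{j_s})$, so summing along any connecting path in $T_s$ shows that the difference of the endpoints also lies in this subspace.

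Next, I would note that $|H_s|$ equals $|\pi_s(\{\mathbf{p}_1,\dots,\mathbf{p}_n\})|$, and this equals $|\pi_s(\{\mathbf{p}_h:h\in H_{s-1}\})|$, because every $\mathbf{p}_i$ already agrees with some representative $\mathbf{p}_h$ with $h\in H_{s-1}$ modulo the smaller subspace $\spn_F(\mathbf{u}_{j_1},\dots,\mathbf{u}_{j_{s-1}})$, and hence also modulo the larger one. Combining with the previous step, $|H_s|$ is bounded by the number of connected components of $T_s$, which equals $|H_{s-1}|-e(T_s)\le |H_{s-1}|-\frac{3d}{\mu}$ since a forest on $v$ vertices with $e$ edges has exactly $v-e$ components. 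I do not anticipate any genuine obstacle: the forest condition built into the construction of $j_s$ was designed precisely to produce this shrinkage, and the rest is a routine bookkeeping argument about projections by an increasing chain of subspaces.
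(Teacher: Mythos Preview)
Your proposal is correct and follows essentially the same approach as the paper: both argue that each step of the construction forces $|H_s|\le |H_{s-1}|-\frac{3d}{\mu}$ because the forest guaranteed by the choice of $j_s$ collapses each of its components to a single point under the projection $V\to V/\spn_F(\mathbf{u}_{j_1},\dots,\mathbf{u}_{j_s})$, and then iterate from $|H_0|=n$ down to $|H_r|\ge 1$. The paper's presentation is organized around the intermediate quotient $V'=V/\spn_F(\mathbf{u}_{j_1},\dots,\mathbf{u}_{j_{s-1}})$, but this is only a cosmetic difference.
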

\begin{cla_proof}
In order to show the claim, it suffices to prove that $|H_s|\le |H_{s-1}|-\frac{3d}{\mu}$ for $s=1,\dots,r$. Indeed, then we have $1\le |H_r|\le |H_0|-r\cdot \frac{3d}{\mu}=n-r\cdot \frac{3d}{\mu}$, which implies $r\le \frac{\mu}{3d}\cdot n$.

So let $s\in \{1,\dots,r\}$, and consider the set $H_{s-1}\su \{1,\dots,n\}$. The image of the point set $\{\mathbf{p}_1,\dots,\mathbf{p}_n\}$ in the quotient space $V'=V/\spn_F(\mathbf{u}_{j_1},\dots,\mathbf{u}_{j_{s-1}})$ is the same as the image of the point set $\{\mathbf{p}_h\midd h\in H_{s-1}\}$ in this quotient space $V'$.

Now, $|H_s|$ is the size of this image under the additional projection given by $V'\to V'/\spn_F(\mathbf{u}_{j_s}) \linebreak= V/\spn_F(\mathbf{u}_{j_1},\dots,\mathbf{u}_{j_{s}})$. Recall that by the definition of $j_s$, there is a forest on the vertex set $H_{s-1}$ with at least $3d/\mu$ edges such that for every edge $xy$ of the forest we have $\mathbf{p}_x-\mathbf{p}_y\in \spn_F(\mathbf{u}_{j_s})$. For every such edge $xy$, the points $\mathbf{p}_x$ and $\mathbf{p}_y$ have the same image in $V'/\spn_F(\mathbf{u}_{j_s})$. So for every connected component of this forest, the corresponding points $\mathbf{p}_x$ are mapped to the same point in $V'/\spn_F(\mathbf{u}_{j_s})$. Since the forest has at least $\frac{3d}{\mu}$ edges, it has at most $|H_{s-1}|-\frac{3d}{\mu}$ connected components. Hence, the image of the point set $\{\mathbf{p}_h\midd h\in H_{s-1}\}$ in $V'/\spn_F(\mathbf{u}_{j_s})$ has size at most $|H_{s-1}|-\frac{3d}{\mu}$, meaning that indeed $|H_s|\le |H_{s-1}|-\frac{3d}{\mu}$.
\end{cla_proof}

\begin{claim}\label{claim:bound-on-H-r} We have $|H_r|> \frac{\mu}{3d}\cdot n$.
\end{claim}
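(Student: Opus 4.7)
My plan is to force $|H_r| > \frac{\mu n}{3d}$ by extending the construction past step $r$ with a relaxed rule, and sandwiching the number of extra steps between an upper bound coming from ``$|H|$ drops by at least $1$ per step'' and a lower bound coming from Ungar's theorem combined with the assumption we are contradicting.

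I will continue the sequence $j_1,\dots,j_r$ as follows: while $|H_{r+s-1}| \ge 2$, pick any two distinct indices $h, h' \in H_{r+s-1}$ and use the hypothesis of the lemma to find an index $i$ with $\mathbf{p}_h - \mathbf{p}_{h'} \in \spn_F(\mathbf{u}_i)$. Because $\mathbf{p}_h$ and $\mathbf{p}_{h'}$ lie in distinct cosets of $W_{r+s-1} := \spn_F(\mathbf{u}_{j_1}, \dots, \mathbf{u}_{j_{r+s-1}})$, this $\mathbf{u}_i$ must lie outside $W_{r+s-1}$, so setting $j_{r+s} := i$ strictly decreases $|H|$. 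I continue this process until $|H_{r+t}| = 1$ for some $t \ge 0$. Since at every new step I add a vector outside the current $F$-span, all of $j_1, \dots, j_{r+t}$ are pairwise distinct; and since each step drops $|H|$ by at least $1$, I obtain $t \le |H_r| - 1$.

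When $|H_{r+t}| = 1$, every pairwise difference $\mathbf{p}_x - \mathbf{p}_y$ lies in $W_{r+t}$, and since by the earlier reduction each $\mathbf{u}_i$ is used by some pair, all $k$ vectors $\mathbf{u}_i$ lie in $W_{r+t}$. Applying the standing assumption (the negation of the conclusion we wish to prove) to $I = \{j_1, \dots, j_{r+t}\}$, which has size $r+t$ by distinctness, gives $k \le d(r+t) + m$. On the other hand, I can apply Ungar's theorem (\Cref{thm-ungar}) to $\mathbf{p}_1, \dots, \mathbf{p}_n$---legitimate because these points are not all collinear over $\mathbb{R}$ by hypothesis---to obtain at least $n-1$ distinct real line directions of the form $\spn_\mathbb{R}(\mathbf{u}_i)$, whence $k \ge n-1$.

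Combining $n-1 \le k \le d(r+t) + m$ with $r \le \mu n/(3d)$ from \Cref{claim-bound-on-r} and $m \le (1-\mu) n + 1$ yields
\[
t \;\ge\; \frac{n - 1 - m}{d} - r \;\ge\; \frac{2\mu n}{3d} - \frac{2}{d},
\]
and then combining with $t \le |H_r| - 1$ gives $|H_r| \ge \frac{2\mu n}{3d} - \frac{2}{d} + 1 > \frac{\mu n}{3d}$ once $n$ is sufficiently large with respect to $d$ and $\mu$, as required. The main point requiring care is verifying that the extended iteration is always well-defined (the hypothesis supplies a usable $i$ whenever $|H| \ge 2$) and that the resulting indices stay pairwise distinct (which follows from the ``fresh vector'' argument above); the rest of the proof is routine arithmetic.
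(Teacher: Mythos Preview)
Your proof is correct and follows essentially the same approach as the paper. The only difference is cosmetic: the paper extends $\{j_1,\dots,j_r\}$ all at once by fixing some $h^*\in H_r$ and adding one index $i(h)$ per $h\in H_r$ (with $\mathbf{p}_h-\mathbf{p}_{h^*}\in\spn_F(\mathbf{u}_{i(h)})$), rather than iterating one step at a time as you do; both yield a set $I$ of size at most $r+|H_r|$ whose $F$-span contains all of $\mathbf{u}_1,\dots,\mathbf{u}_k$, after which the arithmetic (combining the standing assumption, \Cref{claim-bound-on-r}, and Ungar's bound $k\ge n-1$) is identical.
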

\begin{cla_proof}
Suppose towards a contradiction that $|H_r|\le \frac{\mu}{3d}\cdot n$. Let us fix some index $h^*\in H_r$. Then for every $h\in H_r$ by the assumption in Lemma \ref{lem:point-sets-lemma-for-distinct-distances} we can choose an index $i(h)\in \{1,\dots,k\}$ with $\mathbf{p}_h-\mathbf{p}_{h^*}\in\spn_F(\mathbf{u}_{i(h)})$. Now, defining $I=\{j_1,\dots,j_r\}\cup \{i(h)\midd h\in H_r\}$, we have $|I|\le r+|H_r|\le 2\cdot \frac{\mu}{3d}\cdot n$ using Claim \ref{claim-bound-on-r}.

Note that now we have $\mathbf{p}_h-\mathbf{p}_{h^*}\in \spn_{F}(\mathbf{u}_i\midd i\in I)$ for all $h\in H_r$. Furthermore, for every $x\in \{1,\dots,n\}$ there is some $h\in H_{r}$ with $\mathbf{p}_x-\mathbf{p}_h\in \spn_F(\mathbf{u}_{j_1},\dots,\mathbf{u}_{j_{r}})\su \spn_{F}(\mathbf{u}_i\midd i\in I)$. This implies that $\mathbf{p}_x-\mathbf{p}_{h^*}=\mathbf{p}_x-\mathbf{p}_{h}+\mathbf{p}_h-\mathbf{p}_{h^*}\in \spn_{F}(\mathbf{u}_i\midd i\in I)$ for all $x\in \{1,\dots,n\}$. Thus, we obtain $\mathbf{p}_x-\mathbf{p}_{y}=\mathbf{p}_x-\mathbf{p}_{h^*}-(\mathbf{p}_y-\mathbf{p}_{h^*})\in \spn_{F}(\mathbf{u}_i\midd i\in I)$ for all $x,y\in \{1,\dots,n\}.$

Now, we claim that we have $\mathbf{u}_\ell\in \spn_{F}(\mathbf{u}_i\midd i\in I)$ for all $\ell\in \{1,\dots,k\}$. Indeed, for each $\ell\in \{1,\dots,k\}$, we assumed that there exist two distinct indices $x,y\in \{1,\dots,k\}$ with $\mathbf{p}_x-\mathbf{p}_y\in \spn_{F}(\mathbf{u}_\ell)$. Since $\mathbf{p}_x\ne \mathbf{p}_y$, this means that $\mathbf{p}_x-\mathbf{p}_y=t\mathbf{u}_\ell$ for some $t\in F\setminus \{0\}$. But now we have $t\mathbf{u}_\ell=\mathbf{p}_x-\mathbf{p}_y\in \spn_{F}(\mathbf{u}_i\midd i\in I)$, implying that $\mathbf{u}_\ell\in \spn_{F}(\mathbf{u}_i\midd i\in I)$ as claimed.

Thus, the number of indices $\ell\in \{1,\dots,k\}$ with $\mathbf{u}_\ell\in \spn_{F}(\mathbf{u}_i\midd i\in I)$ is
\[k\ge n-1\ge \frac{2\mu}{3}\cdot n+\lceil (1-\mu)\cdot n\rceil+1 = d\cdot \frac{2\mu}{3d}\cdot n+m+1\ge d\cdot |I|+m+1,\]
if $n$ is sufficiently large with respect to $\mu$. This contradicts our assumption that such a set $I$ does not exist.
\end{cla_proof}

Now, for each $i=1,\dots,k$, let us choose a forest $G_i$ on the vertex set $H_r$ with the maximum possible number of edges such that for every edge $xy$ of the forest we have $\mathbf{p}_x-\mathbf{p}_y\in \spn_F(\mathbf{u}_{i})$. Let $\lambda_i$ be the number of edges of this forest, and note that $\lambda_i\le \frac{3d}{\mu}$ by our construction of the sequence $j_1,\dots,j_r$ (more precisely, by the fact that the sequence terminates at $j_r$). In particular, the forest $G_i$ has at most $2\lambda_i$ non-isolated vertices.

Recall that for any pair $(x,y)\in H_r \times H_r$ with $x\ne y$, we have $\mathbf{p}_x-\mathbf{p}_y\in \spn_F(\mathbf{u}_i)$ for some $i\in \{1,\dots,k\}$. We claim that for each $i\in \{1,\dots,k\}$ there are at most $(2\lambda_i)^2$ pairs $(x,y)\in H_r \times H_r$ with $x\ne y$ such that $\mathbf{p}_x-\mathbf{p}_y\in \spn_F(\mathbf{u}_i)$. Indeed, for any such pair $(x,y)$ the vertices $x$ and $y$ must lie in the same component of the forest $G_i$ (since otherwise we could add the edge $xy$ to the forest $G_i$, which would be a contradiction to our choice of the forest $G_i$). Since $x\ne y$, this means that $x$ and $y$ are both non-isolated vertices in $G_i$, meaning that there are at most $2\lambda_i$ choices for $x$ and at most $2\lambda_i$ choices for $y$. So for each $i\in \{1,\dots,k\}$, there are indeed at most $(2\lambda_i)^2$ pairs $(x,y)\in H_r \times H_r$ with $x\ne y$ satisfying $\mathbf{p}_x-\mathbf{p}_y\in \spn_F(\mathbf{u}_i)$.

Since for each of the $|H_r|\cdot (|H_r|-1)$ pairs $(x,y)\in H_r \times H_r$ with $x\ne y$ there exists an index $i\in \{1,\dots,k\}$ with $\mathbf{p}_x-\mathbf{p}_y\in \spn_F(\mathbf{u}_i)$, we can conclude that
\[\frac{|H_r|^2}{2}\le |H_r|\cdot (|H_r|-1)\le \sum_{i=1}^{k}(2\lambda_i)^2= 4\cdot \sum_{i=1}^{k}\lambda_i^2\le \frac{12d}{\mu}\sum_{i=1}^{k}\lambda_i.\]
Here we used that $|H_r|> \frac{\mu}{3d}\cdot n\ge 2$, which holds by Claim \ref{claim:bound-on-H-r}, and that $\lambda_i\le \frac{3d}{\mu}$ for $i=1,\dots,k$. Rearranging yields $\lambda_1+\dots+\lambda_k\ge \frac{\mu}{24d}\cdot |H_r|^2$.

Let us now apply Lemma \ref{lem:linear-algebra-greedy} with $M=\frac{3d}{\mu}$, recalling our assumption that for every subset $I\su \{1,\dots,k\}$, we have $\mathbf{u}_\ell\in \spn_{F}(\mathbf{u}_i\midd i\in I)$ for at most $d\cdot |I|+m$ indices $\ell\in \{1,\dots,k\}$. By Lemma \ref{lem:linear-algebra-greedy}, there is a subset $J\su \{1,\dots,k\}$ such that the vectors $\mathbf{u}_j$ for $j\in J$ are linearly independent over $F$ and such that
\[\sum_{j\in J}\lambda_j\ge \frac{\lambda_1+\dots+\lambda_k-m\cdot \frac{3d}{\mu}}{d}\ge \frac{\frac{\mu}{24d}\cdot |H_r|^2-n\cdot \frac{3d}{\mu}}{d}.\]
By Claim \ref{claim:bound-on-H-r} we have $n\cdot \frac{3d}{\mu}\le \left(\frac{3d}{\mu}\right)^2\cdot |H_r|\le \frac{\mu}{48d}\cdot |H_r|^2$ if $n$ (and therefore also $|H_r|\ge \frac{\mu}{3d}\cdot n$) is sufficiently large 
with respect to $d$ and $\mu$. So we can conclude that
\[\sum_{j\in J}\lambda_j\ge\frac{\frac{\mu}{24d}\cdot |H_r|^2-n\cdot \frac{3d}{\mu}}{d}\ge \frac{\frac{\mu}{48d}\cdot |H_r|^2}{d}=\frac{\mu}{48d^2}\cdot |H_r|^2.\]
Finally, let $G$ be the graph obtained by taking all edges of the forests $G_j$ for $j\in J$. As the vectors $\mathbf{u}_j$ for $j\in J$ are linearly independent over $F$, the edge sets of these forests are pairwise disjoint, and hence $G$ has $\sum_{j\in J}\lambda_j\ge \frac{\mu}{48d^2} \cdot |H_r|^2$ edges. On the other hand, $G$ satisfies the assumptions in Lemma \ref{lem:point-sets-lin-independent}, and so Lemma \ref{lem:point-sets-lin-independent} implies that $G$ has at most $\frac12 \cdot |H_r|\log |H_r|$ edges. Thus,
\[\frac{\mu}{48d^2}\cdot |H_r|^2\le e(G)\le \frac12 \cdot |H_r|\log |H_r|,\]
which is a contradiction if $n$ (and hence also $|H_r|\ge \frac{\mu}{3d}\cdot n$) is sufficiently large 
with respect to $d$ and $\mu$.
\end{proof}

\section{Proofs of Theorems \ref{thm:main} and \ref{thm:distinct-distances}: most norms have few special distances}
\label{S4}

Fix $d\ge 2$. Recall that $\B_d$ is the collection of all closed, bounded, $\textbf{0}$-symmetric convex bodies in $\R^d$ with $\textbf{0}$ in the interior. The sets $B\in \B_d$ are in direct correspondence with the norms $\nn$ on $\R^d$ (where to every norm $\nn$ on $\R^d$ we associate the set $B\in \B_d$ arising as the unit ball of $\nn$).

Let $\A \subseteq \B_d$ be the set of all $B\in \B_d$ arising as the unit ball of a norm $\nn$ on $\R^d$ such that for some $n\ge 1$ there exist $n$ points in $\R^d$ with more than $\frac d2\cdot n \log n$ unit distances according to the norm $\nn$. In order to prove Theorem \ref{thm:main} we need to show that $\A \subseteq \B_d$ is a meagre set.

For Theorem \ref{thm:distinct-distances}, let $\mu>0$ and let $n_0(d,\mu)>1/\mu$ be such that the statement in Lemma \ref{lem:point-sets-lemma-for-distinct-distances} holds for all $n\ge n_0(d,\mu)$. Now, let $\A^*_\mu \subseteq \B_d$ be the set of all $B\in \B_d$ arising as the unit ball of a norm $\nn$ on $\R^d$ such that for some $n\ge n_0(d,\mu)$ there exist $n$ points in $\R^d$ with at most $(1-\mu)\cdot n$ distinct distances according to the norm $\nn$. In order to prove Theorem \ref{thm:distinct-distances} it suffices to show that $\A^*_\mu \subseteq \B_d$ is a meagre set. Indeed, then for most norms on $\mathbb{R}^d$ it is true that for all $n\ge n_0(d,\mu)$ any set of $n$ points in $\mathbb{R}^d$ has at least $(1-\mu)\cdot n$ distinct distances appearing. This means that the number of distinct distances is of the form $(1-o(1))\cdot n$.

Let us define $m=0$ in the setting of Theorem \ref{thm:main} and $m=\lceil (1-\mu)n \rceil$ in the setting of Theorem \ref{thm:distinct-distances}.

To show  that $\A$ and $\A^*_\mu$ are meagre sets, we need to show that $\A$ and $\A^*_\mu$ can be covered by a countable union of nowhere dense subsets of $\B_d$. We will consider suitably defined subsets $\A_{A,\eta}\su \B_d$, indexed by a $(d\ell+m+1) \times \ell$ matrix $A$ (for a positive integer $\ell$) and some rational number $0<\eta<\pi/2$. In the setting of Theorem \ref{thm:main}, i.e.\ to cover the set $\A$, we will consider rational matrices $A\in \mathbb{Q}^{(d\ell+m+1) \times \ell}$. In the setting of Theorem \ref{thm:distinct-distances}, i.e.\ to cover the set $\A^*_\mu$, our matrices $A$ will have entries in the function field $\mathbb{Q}(x_1,\dots,x_m)$ (i.e.\ in the field of rational functions in $m$ variables with rational coefficients). In either case, the entries of $A$ are chosen from a countable field, and so there are only countably many choices for such $A$ and $\eta$. Thus, it suffices to show that $\A\su \bigcup_{A, \eta}\A_{A,\eta}$ or $\A^*_\mu\su \bigcup_{A, \eta}\A_{A,\eta}$, respectively, and that  each of the sets $\A_{A,\eta}\su \B_d$ is nowhere dense in $\B_d$.

Let us now define our sets $\A_{A,\eta}\su \B_d$, starting with the setting of Theorem \ref{thm:main} (recall that then $m=0$). Given a rational $(d\ell+1) \times \ell$ matrix $A=(a_{ji})$ and given a rational $0<\eta<\pi/2$, let $\A_{A,\eta}\su \B_d$ consist of the unit balls $B\in \B_d$ of all norms $\nn$ on $\R^d$ for which there exist unit vectors $\textbf{u}_1,\ldots, \textbf{u}_{d\ell+1}\in \R^d$ (i.e.\ vectors with endpoints on the boundary of $B$) satisfying the following two conditions:
\begin{compactitem}
    \item $\textbf{u}_{j}=\sum_{i=1}^{\ell} a_{ji}\textbf{u}_i$ for $j=1,\dots,d\ell+1$.
    \item For all $1\le i<j\le d\ell+1$, the angle between the two lines $\spn_{\R}(\textbf{u}_i)$ and $\spn_{\R}(\textbf{u}_j)$ is larger than $\eta$.
\end{compactitem}

In the setting of Theorem \ref{thm:distinct-distances}, given a $(d\ell+m+1) \times \ell$ matrix $A=(a_{ji})$ with entries in $\mathbb{Q}(x_1,\dots,x_m)$ and given a rational $0<\eta<\pi/2$, let $\A_{A,\eta}\su \B_d$ consist of the unit balls $B\in \B_d$ of all norms $\nn$ on $\R^d$ for which there exist $z_1,\dots,z_m\in \mathbb{R}$ and unit vectors $\textbf{u}_1,\ldots, \textbf{u}_{d\ell+m+1}\in \R^d$ (i.e.\ vectors with endpoints on the boundary of $B$) satisfying the following three conditions:
\begin{compactitem}
    \item For every entry $a_{ji}\in \mathbb{Q}(x_1,\dots,x_m)$ of the matrix $A$, the evaluation $a_{ji}(z_1,\dots,z_m)\in \mathbb{R}$ is well-defined (i.e.\ the polynomial in the denominator of $a_{ji}$ does not evaluate to zero at $(z_1,\dots,z_m)$).
    \item $\textbf{u}_{j}=\sum_{i=1}^{\ell} a_{ji}(z_1,\dots,z_m)\textbf{u}_i$ for $j=1,\dots,d\ell+m+1$.
    \item For all $1\le i<j\le d\ell+m+1$, the angle between the two lines $\spn_{\R}(\textbf{u}_i)$ and $\spn_{\R}(\textbf{u}_j)$ is larger than $\eta$.
\end{compactitem}

In order to prove \Cref{thm:main,thm:distinct-distances}, it suffices to show that each of the sets $\A_{A,\eta}$ is nowhere dense and that $\A\su \bigcup_{A, \eta}\A_{A,\eta}$ and $\A^*_\mu\su \bigcup_{A, \eta}\A_{A,\eta}$, respectively. These statements will be the content of the following lemmas. 

\begin{lem}\label{lem:conatined-in-union-A-A-eta}
In the setting of Theorem \ref{thm:main}, we have $\A\su \bigcup_{A,\eta}\A_{A,\eta}$, where the union is over all rational $(d\ell+1) \times \ell$ matrices $A\in \mathbb{Q}^{(d\ell+1) \times \ell}$ for all positive integers $\ell$, and all rational numbers $0<\eta<\pi/2$.
\end{lem}

\begin{proof}
Let $B\in \A$. Then $B$ is the unit ball of a norm $\nn$ on $\R^d$ with the property that there exist distinct points $\mathbf{p}_1,\dots,\mathbf{p}_n\in \R^d$ such that there are more than $\frac d2\cdot n \log n$ unit distances according to the norm $\nn$ between the points $\mathbf{p}_1,\dots,\mathbf{p}_n$. 
Now, let $\textbf{u}_1,\ldots,\textbf{u}_k\in \R^d$ with $||\textbf{u}_1||=\dots=||\textbf{u}_k||=1$ be the unit vectors (signed arbitrarily) corresponding to the unit distances between the points $\mathbf{p}_1,\dots,\mathbf{p}_n$. Note that $\textbf{u}_1,\ldots,\textbf{u}_k$ are non-zero vectors in distinct directions (i.e.\ the lines $\spn_{\R}(\textbf{u}_i)$ for $i=1,\dots,k$ are all distinct).

Let us now consider $\mathbb{R}^d$ as an (infinite-dimensional) vector space over $\mathbb{Q}$, and let us apply \Cref{lem:point-sets-key-lemma}. As in the lemma statement, consider the graph with vertex set $\{1,\dots,n\}$, where for any $x,y\in \{1,\dots,n\}$ we draw an edge between the vertices $x$ and $y$ if and only if $\mathbf{p}_x-\mathbf{p}_y\in \{\pm \mathbf{u}_1, \dots, \pm \textbf{u}_k\}$. Then the edges correspond precisely to the unit distances according to $\nn$ among the points $\mathbf{p}_1,\dots,\mathbf{p}_n$, and so the graph has more than $\frac d2 \cdot  n \log n$ edges. Thus, by Lemma \ref{lem:point-sets-key-lemma}, there exists a subset $I\su \{1,\dots,k\}$, such that we have $\mathbf{u}_j\in \spn_{\mathbb{Q}}(\mathbf{u}_i\midd i\in I)$ for at least $d \cdot |I|+1$ indices $j\in \{1,\dots,k\}$. Note that we must have $I\neq \emptyset$, since $\textbf{u}_1,\ldots,\textbf{u}_k$ are non-zero vectors.

Upon relabelling the indices, we may assume that $I=\{1,\dots,\ell\}$ for some integer $\ell\ge 1$, and that $\mathbf{u}_1,\dots, \mathbf{u}_{d\ell+1}\in \spn_{\mathbb{Q}}(\mathbf{u}_i\midd i\in I)=\spn_{\mathbb{Q}}(\mathbf{u}_1,\dots, \mathbf{u}_{\ell})$. Then we can find coefficients $a_{ji}\in \mathbb{Q}$ for $j=1,\dots,d\ell+1$ and $i=1,\dots,\ell$ such that $\mathbf{u}_j=\sum_{i=1}^{\ell}a_{ji}\mathbf{u}_i$ for all $j=1,\dots,d\ell+1$.

We can now take $A$ to be the $(d\ell+1)\times \ell$ matrix with entries $a_{ji}$ for $j=1,\dots,d\ell+1$ and $i=1,\dots,\ell$. 
Furthermore, consider the $d\ell+1$ lines $\spn_{\R}(\textbf{u}_i)$ for $i=1,\dots,d\ell+1$ and choose $0<\eta<\pi/2$ to be rational and smaller than the angle between any two of these lines. 
Then $\textbf{u}_1,\ldots, \textbf{u}_{d\ell+1}$ are unit vectors according to the norm $\nn$, such that $\mathbf{u}_j=\sum_{i=1}^{\ell}a_{ji}\mathbf{u}_i$ for all $j=1,\dots,d\ell+1$ and such that for all $1\le i<j\le d\ell+1$ the angle between the lines $\spn_{\R}(\textbf{u}_i)$ and $\spn_{\R}(\textbf{u}_j)$ is larger than $\eta$. This means that the unit ball $B$ of the norm $\nn$ belongs to $\A_{A,\eta}$, as desired. 
\end{proof}

\begin{lem}
In the setting of Theorem \ref{thm:distinct-distances}, we have $\A^*_\mu\su \bigcup_{A,\eta}\A_{A,\eta}$, where the union is over all $(d\ell+m+1) \times \ell$ matrices $A\in \mathbb{Q}(x_1,\dots,x_m)^{(d\ell+m+1) \times \ell}$ for all positive integers $\ell$, and all rational numbers $0<\eta<\pi/2$.
\end{lem}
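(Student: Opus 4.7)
The plan is to mimic the structure of the preceding lemma, applying \Cref{lem:point-sets-lemma-for-distinct-distances} in place of \Cref{lem:point-sets-key-lemma}. Take $B \in \A^*_\mu$ and let $\|.\|$ be the corresponding norm; by the definition of $\A^*_\mu$ there exist $n \ge n_0(d,\mu)$ distinct points $\mathbf{p}_1,\dots,\mathbf{p}_n \in \R^d$ determining at most $(1-\mu)n$ distinct distances, which I list as $d_1,\dots,d_s$ (so $s\le (1-\mu)n$). Set $m=\lceil(1-\mu)n\rceil$; note that $s\le m$. Since $n_0(d,\mu)>1/\mu$ forces $n-1>(1-\mu)n\ge s$, the points cannot all lie on a common affine line (otherwise they would determine at least $n-1$ distinct distances, too many).

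Next, for each one-dimensional subspace $\spn_{\R}(\mathbf{p}_x-\mathbf{p}_y)$ arising among the pairwise differences, I pick one unit vector with respect to $\|.\|$, producing non-zero vectors $\mathbf{u}_1,\dots,\mathbf{u}_k$ with pairwise distinct spans. Let $F=\mathbb{Q}(d_1,\dots,d_s)\su \R$. Every difference satisfies $\mathbf{p}_x-\mathbf{p}_y=\pm d_i \mathbf{u}_j \in \spn_F(\mathbf{u}_j)$, so the hypotheses of \Cref{lem:point-sets-lemma-for-distinct-distances} are satisfied with $V=\R^d$ viewed as a real vector space. The lemma produces a subset $I\su\{1,\dots,k\}$ with $\mathbf{u}_\ell \in \spn_F(\mathbf{u}_i\mid i\in I)$ for at least $d\cdot|I|+(1-\mu)n+1$ values of $\ell$; since this count is an integer, it is in fact at least $d|I|+m+1$. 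Moreover $I\ne\emptyset$ because each $\mathbf{u}_i$ is nonzero.

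After relabelling, I write $I=\{1,\dots,\ell\}$ and express $\mathbf{u}_j = \sum_{i=1}^{\ell} a_{ji} \mathbf{u}_i$ with $a_{ji} \in F$, for $j=1,\dots,d\ell+m+1$. Each $a_{ji}$ is the value at $(d_1,\dots,d_s)$ of some rational function $r_{ji}\in\mathbb{Q}(x_1,\dots,x_s)\su\mathbb{Q}(x_1,\dots,x_m)$ whose denominator does not vanish at $(d_1,\dots,d_s)$. Assembling the $r_{ji}$ into a $(d\ell+m+1)\times\ell$ matrix $A$ with entries in $\mathbb{Q}(x_1,\dots,x_m)$, taking $(z_1,\dots,z_m)=(d_1,\dots,d_s,0,\dots,0)$, and choosing a rational $\eta>0$ strictly smaller than the (positive) angle between any two of the distinct lines $\spn_\R(\mathbf{u}_1),\dots,\spn_\R(\mathbf{u}_{d\ell+m+1})$, I verify directly that all three conditions in the definition of $\A_{A,\eta}$ are met, so $B\in\A_{A,\eta}$.

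The main conceptual step is the invocation of \Cref{lem:point-sets-lemma-for-distinct-distances}; once that is in hand, the remaining work is bookkeeping. The only points requiring care are ensuring $s\le m$ (so that the field $\mathbb{Q}(x_1,\dots,x_m)$ actually accommodates the coefficients), that the ceiling rounds the count $(1-\mu)n+1$ the right way, and that the collinear case is excluded so that \Cref{lem:point-sets-lemma-for-distinct-distances} applies; all three are handled automatically by the choice $m=\lceil(1-\mu)n\rceil$ together with $n\ge n_0(d,\mu)>1/\mu$.
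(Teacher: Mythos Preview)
Your argument is correct and follows essentially the same approach as the paper's proof: both apply \Cref{lem:point-sets-lemma-for-distinct-distances} after setting $F=\mathbb{Q}(\text{distinct distances})$, exclude the collinear case via $n>1/\mu$, and then lift the resulting $F$-linear relations to rational functions in $\mathbb{Q}(x_1,\dots,x_m)$. The only cosmetic differences are that you pad $(z_1,\dots,z_m)$ with zeros rather than arbitrary reals and you impose the angle condition on all $d\ell+m+1$ lines rather than just the first $d\ell+1$; both variants satisfy the definition of $\A_{A,\eta}$.
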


\begin{proof}
Let $B\in \A^*_\mu$. Then $B$ is the unit ball of a norm $\nn$ on $\R^d$ with the property that for some $n\ge n_0(d,\mu)$ there exist distinct points $\mathbf{p}_1,\dots,\mathbf{p}_n\in \R^d$ such that there are at most $(1-\mu)\cdot n\le m$ distinct distances according to the norm $\nn$ between the points $\mathbf{p}_1,\dots,\mathbf{p}_n$. 
Now, let $\textbf{u}_1,\ldots,\textbf{u}_k\in \R^d$ with $||\textbf{u}_1||=\dots=||\textbf{u}_k||=1$ be the unit vectors (signed arbitrarily) in the directions of all the differences between the points $\mathbf{p}_1,\dots,\mathbf{p}_n$, and let $z_1,\dots,z_m>0$ be positive real numbers such that $||\mathbf{p}_x-\mathbf{p}_y||\in \{z_1,\dots,z_m\}$ for all distinct $x,y\in \{1,\dots,n\}$. Note that then for all distinct $x,y\in \{1,\dots,n\}$ we have $\mathbf{p}_x-\mathbf{p}_y = \pm z_j\textbf{u}_i$ for some $j\in \{1,\dots,m\}$ and $i\in \{1,\dots,k\}$. Defining $F=\mathbb{Q}(z_1,\dots,z_m)\su \mathbb{R}$ to be the field extension of $\mathbb{Q}$ generated by $z_1,\dots,z_m$, we obtain that for all distinct $x,y\in \{1,\dots,n\}$ we have $\mathbf{p}_x-\mathbf{p}_y \in \spn_F(\textbf{u}_i)$ for some $i\in \{1,\dots,k\}$. Furthermore, note that $\textbf{u}_1,\ldots,\textbf{u}_k$ are non-zero vectors in distinct directions (i.e.\ the lines through $\textbf{u}_1,\ldots,\textbf{u}_k$ are distinct).

If all of the points $\mathbf{p}_1,\dots,\mathbf{p}_n$ are on a common (affine) line in $\mathbb{R}^d$, then there are at least $n-1$ distinct distances among these points according to the norm $\nn$. However we have $n-1>(1-\mu)\cdot n$ (since $n\ge n_0(d,\mu)>1/\mu$), so this would be a contradiction. Hence, the points $\mathbf{p}_1,\dots,\mathbf{p}_n$ do not all lie on a common line in $\mathbb{R}^d$.

By Lemma \ref{lem:point-sets-lemma-for-distinct-distances} applied to $V=\mathbb{R}^d$ and $F=\mathbb{Q}(z_1,\dots,z_m)\su \mathbb{R}$, there exists a subset $I\su \{1,\dots,k\}$, such that we have $\mathbf{u}_j\in \spn_{F}(\mathbf{u}_i\midd i\in I)$ for at least $d \cdot |I|+(1-\mu)\cdot n+1$ indices $j\in \{1,\dots,k\}$. As the number of such indices is an integer, and $m=\lceil (1-\mu)n\rceil$, we have $\mathbf{u}_j\in \spn_{F}(\mathbf{u}_i\midd i\in I)$ for at least $d \cdot|I|+m+1$ indices $j\in \{1,\dots,k\}$. Note that we must have $I\neq \emptyset$, since $\textbf{u}_1,\ldots,\textbf{u}_k$ are non-zero vectors.

Upon relabelling the indices if necessary, we may assume that $I=\{1,\dots,\ell\}$ for some integer $\ell\ge 1$, and that $\mathbf{u}_1,\dots, \mathbf{u}_{d\ell+m+1}\in \spn_{F}(\mathbf{u}_i\midd i\in I)=\spn_{F}(\mathbf{u}_1,\dots, \mathbf{u}_{\ell})$. Then we can find coefficients $a_{ji}^*\in F=\mathbb{Q}(z_1,\dots,z_m)$ for $j=1,\dots,d\ell+m+1$ and $i=1,\dots,\ell$ such that $\mathbf{u}_j=\sum_{i=1}^{\ell}a_{ji}^*\mathbf{u}_i$ for all $j=1,\dots,d\ell+m+1$. For each of these coefficients $a_{ji}^*\in \mathbb{Q}(z_1,\dots,z_m)$, we can choose a rational function $a_{ji}\in \mathbb{Q}(x_1,\dots,x_m)$ such that $a_{ji}(z_1,\dots,z_m)=a_{ji}^*$ (i.e.\ $a_{ji}$ evaluates to $a_{ji}^*$ when plugging in $z_1,\dots,z_m$ for the abstract variables $x_1,\dots,x_m$ in the function field $\mathbb{Q}(x_1,\dots,x_m)$). Note that in particular, all the evaluations $a_{ji}(z_1,\dots,z_m)$ for $j=1,\dots,d\ell+m+1$ and $i=1,\dots,\ell$ are well-defined.

We can now take $A$ to be the $(d\ell+m+1)\times \ell$ matrix with entries $a_{ji}$ for $j=1,\dots,d\ell+m+1$ and $i=1,\dots,\ell$. 
Furthermore, consider the $d\ell+m+1$ lines $\spn_{\R}(\textbf{u}_i)$ for $i=1,\dots,d\ell+m+1$ and choose $0<\eta<\pi/2$ to be rational and smaller than the angle between any two of these lines. 
Then $\textbf{u}_1,\ldots, \textbf{u}_{d\ell+m+1}$ are unit vectors according to the norm $\nn$, such that $\mathbf{u}_j=\sum_{i=1}^{\ell}a^*_{ji}\mathbf{u}_i=\sum_{i=1}^{\ell}a_{ji}(z_1,\dots,z_m)\mathbf{u}_i$ for all $j=1,\dots,d\ell+m+1$ and such that for all $1\le i<j\le d\ell+m+1$ the angle between the lines $\spn_{\R}(\textbf{u}_i)$ and $\spn_{\R}(\textbf{u}_j)$ is larger than $\eta$. This means that the unit ball $B$ of the norm $\nn$ belongs to $\A_{A,\eta}$, as desired. 
\end{proof}

It remains to show that each of the sets $\A_{A,\eta}$ is nowhere dense in $\B_d$. This is the content of the following lemma.

\begin{lem}\label{lem:A-A-eta-nowhere-dense}
Let $\ell>0$ be an integer, let $0<\eta<\pi/2$ be a rational number, and let $A$ be a $(d\ell+m+1)\times \ell$ matrix with entries in $\mathbb{Q}$ or in $\mathbb{Q}(x_1,\dots,x_m)$. Then $\A_{A,\eta}$ is nowhere dense in $\B_d$.
\end{lem}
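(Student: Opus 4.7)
The plan is to show that for every $B \in \B_d$ and every $\mu > 0$, one can find $B' \in \B_d$ with $d_H(B, B') < \mu$ together with a Hausdorff neighborhood of $B'$ disjoint from $\A_{A,\eta}$, which directly gives nowhere density. First, apply \Cref{lem:approximating-polytope} with a carefully chosen $\delta>0$ (depending on $\eta$) to obtain a $\textbf{0}$-symmetric polytope $B_0$ with $d_H(B, B_0) < \mu/2$ whose facets all have diameter less than $\delta$. The smallness of $\delta$ ensures that any two points of $\partial B_0$ lying on the same pair of opposite facets have their spans at angle less than $\eta$; consequently, any configuration of $d\ell+m+1$ unit vectors with pairwise line-angles exceeding $\eta$ must be associated with $d\ell+m+1$ distinct facet pairs of $B_0$, and $B_0$ will automatically have at least this many facet pairs once $\delta$ is small.

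Writing $B_0 = \{\textbf{x} \in \R^d : |\textbf{o}_k \cdot \textbf{x}| \le t_k, \; k = 1, \dots, h\}$, I regard the parameters $(\textbf{o}_k, t_k)_{k=1}^h$ as coordinates for small perturbations of $B_0$ within the polytope class. For each injection $\sigma$ from $\{1,\dots,d\ell+m+1\}$ to the set of $2h$ oriented facets sending different vectors to different facet pairs, the conditions defining $\A_{A,\eta}$ become a system of polynomial equations in the $d\ell$ coordinates of $\textbf{u}_1,\dots,\textbf{u}_\ell$ together with (for \Cref{thm:distinct-distances}) the $m$ real numbers $z_1,\dots,z_m$. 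There are $\ell$ equations from requiring each $\textbf{u}_i$ with $i \le \ell$ to lie on its assigned hyperplane, plus $(d-1)\ell + m + 1$ further equations from requiring $\sum_i a_{ji}(\textbf{z}) \textbf{u}_i$ to lie on its assigned hyperplane for each $j > \ell$; this gives $d\ell + m + 1$ equations in $d\ell + m$ unknowns, over-determined by exactly one.

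The key claim is that for each $\sigma$, solvability of the system defines a proper algebraic subvariety of hyperplane parameter space; equivalently, after eliminating $\textbf{u}_i$ and $\textbf{z}$, the resulting polynomial condition on the parameters is not identically zero. To verify this, one specialises the parameters: first choose $(\textbf{o}_{k_i}, t_{k_i})$ for $i \le \ell$ so as to restrict each $\textbf{u}_i$ to a codimension-$1$ affine subspace; the remaining $(d-1)\ell + m + 1$ equations then become affine constraints in the $(d-1)\ell + m$ remaining free parameters, and a generic choice of $(\textbf{o}_{k_j}, t_{k_j})$ for $j > \ell$ makes these constraints affinely independent and hence inconsistent. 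Since there are only finitely many combinatorial assignments $\sigma$, the union of the corresponding bad subvarieties is nowhere dense in parameter space, and a small perturbation of $B_0$ yields $B' \in \B_d$ with $d_H(B_0, B') < \mu/2$ for which no $\sigma$ produces a solvable system.

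Finally, I show that some Hausdorff neighborhood of $B'$ is disjoint from $\A_{A,\eta}$. If some $B''$ with $d_H(B'', B') < \nu$ were to lie in $\A_{A,\eta}$, then by \Cref{lem:Hausdorff-distance-boundary} each of the required unit vectors $\textbf{u}_i''$ on $\partial B''$ is within Euclidean distance $\nu$ of $\partial B'$, hence, for $\nu$ small relative to the facet geometry of $B'$, canonically associated with a unique facet of $B'$; the small-facet and angle-$>\eta$ conditions then force an injective combinatorial assignment $\sigma$. A compactness argument (letting $\nu \to 0$ along a convergent subsequence of counterexamples) extracts an exact solution of the polynomial system for $B'$ and that $\sigma$, contradicting the construction. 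The main obstacle is the non-triviality claim in the previous paragraph: the over-determined-by-one count makes it plausible, but one must rule out hidden algebraic identities among the coefficients $a_{ji}$ that would force the system to be universally solvable for every polytope, which the explicit parameter specialisation handles.
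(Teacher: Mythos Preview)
Your overall strategy matches the paper's: approximate by a $\textbf{0}$-symmetric polytope with small facets, use the angle-$>\eta$ condition to force the $d\ell+m+1$ unit vectors onto distinct facet pairs, and then exploit the over-determination ($d\ell+m+1$ constraints versus $d\ell+m$ free parameters) to perturb the polytope out of $\A_{A,\eta}$. However, two of your steps have genuine gaps.

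\textbf{The ``affine independence'' specialisation.} In the setting of Theorem~\ref{thm:distinct-distances} the entries $a_{ji}\in\mathbb{Q}(x_1,\dots,x_m)$ are rational functions of $z_1,\dots,z_m$, so the constraints $\textbf{o}_{\sigma(j)}\cdot\sum_i a_{ji}(z)\textbf{u}_i=\pm t_{\sigma(j)}$ are \emph{not} affine in the unknowns $(\textbf{u}_1,\dots,\textbf{u}_\ell,z_1,\dots,z_m)$. Your claim that a generic choice of hyperplane parameters makes the remaining constraints ``affinely independent and hence inconsistent'' therefore does not apply; there is no affine structure to invoke. The paper handles this cleanly by a transcendence-degree argument (Fact~\ref{fact-trans-degree}): fixing the normals $\textbf{o}_1,\dots,\textbf{o}_h$ once and for all, each $t_{\sigma(j)}$ becomes a fixed rational function of the $d\ell+m$ real variables (the entries of $\textbf{u}_1,\dots,\textbf{u}_\ell$ and $z_1,\dots,z_m$), and since there are $d\ell+m+1$ such functions there is automatically a nonzero polynomial $P$ with $P(t_{\sigma(1)},\dots,t_{\sigma(d\ell+m+1)})\equiv 0$. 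No genericity or transversality is needed, and the argument is insensitive to any algebraic identities among the $a_{ji}$.

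\textbf{The compactness step.} Your final argument lets $\nu\to 0$ and extracts a convergent subsequence of counterexamples to produce an exact solution for $B'$. The vectors $\textbf{u}_j''$ are bounded (they sit on $\partial B''$ with $d_H(B'',B')<\nu$), so they subconverge; but the real numbers $z_1,\dots,z_m$ carry no a~priori bound, and even along a subsequence they may diverge or converge to a pole of some $a_{ji}$, so no limiting solution need exist. The paper avoids any limit in $(\textbf{u},z)$: having shown that the achievable tuples $(t_1,\dots,t_h)$ lie in a fixed \emph{closed} set $H$ (a finite union of zero sets of the polynomials $P$ above), it picks $(t_1,\dots,t_h)\notin H$ and a $\delta'$-box around it disjoint from $H$. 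For any nearby $B''\in\A_{A,\eta}$ one then reads off a nearby tuple $(t_1',\dots,t_h')$ directly as $t'_{\varphi(j)}=|\textbf{o}_{\varphi(j)}\cdot\textbf{u}_j|$, which is achievable by construction but lies in the $\delta'$-box, a contradiction. No compactness on the auxiliary variables is required.

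In short, your outline is correct in spirit, but to close it you should replace the specialisation and the limiting argument by the transcendence-degree step: it simultaneously produces the proper closed ``bad'' set in $t$-space and makes the neighbourhood argument immediate.
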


\begin{proof}
As in the lemma statement, let the matrix $A=(a_{ji})$ and $0<\eta<\pi/2$ be fixed.  To show that $\A_{A,\eta}$ is nowhere dense in $\B_d$, we need to show that for every $B_0\in \B_d$ and every $\eps>0$ there exist $B \in \B_d$ and $\delta>0$ with $d_H(B,B_0)<\eps$ such that no $B' \in \B_d$ with $d_H(B,B')<\delta$ belongs to $\A_{A,\eta}$. So let us fix $B_0\in \B_d$. It suffices to show the desired statement for small $\eps>0$ (note that if the statement holds for some $\eps>0$, then it holds for any larger $\eps$ as well).

Recall that $B_0\su \R^d$ is a closed, bounded, $\textbf{0}$-symmetric convex body containing $\textbf{0}$ in its interior. Due to the latter condition, there exists some $s>0$ such that $||\textbf{b}||_2\ge 2s$ for all $\textbf{b}$ on the boundary of $B_0$. By making $\eps$ smaller if needed, we may assume that $\eps<s$.

By Lemma \ref{lem:approximating-polytope}, applied with $\mu=\min\{\eps/2,s \cdot \sin (\eta/3)\}$, we can approximate $B_0$ with a bounded $\textbf{0}$-symmetric polytope $B_1$ containing $\textbf{0}$ in its interior such that $d_H(B_0,B_1)<\eps/2$ and all facets of $B_1$ have diameter at most $s\cdot \sin (\eta/3)$ (with respect to the Euclidean distance). By Lemma \ref{lem:Hausdorff-distance-boundary} every point $\textbf{b}$ on the boundary of $B_1$ has Euclidean distance at most $\eps/2<s$ to some point on the boundary of $B_0$. In particular, we have $||\textbf{b}||_2\ge s$ for all $\textbf{b}$ on the boundary of $B_1$ (so $B_1$ contains the Euclidean ball with radius $s$ around the origin).

Let us now choose a system of linear inequalities of the form $|\textbf{o}_i\cdot \textbf{x}|\le s_i$ for $i=1,\dots,h$ describing the polytope $B_1$ (with non-zero vectors $\textbf{o}_1,\dots,\textbf{o}_h\in \R^d$ and real numbers $s_1,\dots,s_h>0$). More formally, $B_1$ is the set of all $\textbf{x}\in \R^d$ such that $|\textbf{o}_i\cdot \textbf{x}|\le s_i$ for $i=1,\dots,h$. By rescaling the inequalities, we may choose the vectors $\textbf{o}_1,\dots,\textbf{o}_h\in \R^d$ such that $||\textbf{o}_1||_2=\dots=||\textbf{o}_h||_2=1$. As $B_1$ contains the Euclidean ball of radius $s$ around the origin, we have $s_i\ge s$ for $i=1,\dots,h$.

Note that geometrically, one can think of $\textbf{o}_1,\dots,\textbf{o}_h$ as the (Euclidean) unit vectors orthogonal to the (parallel pairs of) facets of $B_1$. Furthermore, note that every point $\textbf{b}$ on the boundary of $B_1$ satisfies $|\textbf{o}_i\cdot \textbf{b}|= s_i$ for some $i\in \{1,\dots,h\}$.

In the setting of Theorem \ref{thm:main} (where $m=0$ and $A$ has entries in $\mathbb{Q}$), let us say that an $h$-tuple $(t_1,\dots,t_h)\in \R^h$ is \emph{achievable} if there exist distinct indices $\varphi(1),\dots,\varphi(d\ell+1)\in \{1,\dots,h\}$ and vectors $\textbf{u}_1,\ldots, \textbf{u}_{d\ell+1}\in \R^d$ satisfying the following two conditions:
\begin{compactitem}
    \item[(a)] $\textbf{u}_{j}=\sum_{i=1}^{\ell} a_{ji}\textbf{u}_i$ for $j=1,\dots,d\ell+1$, and
    \item[(b)] $|\textbf{o}_{\varphi(j)} \cdot \textbf{u}_j|=t_{\varphi(j)}$ for $j=1,\dots,d\ell+1$.
\end{compactitem}
Intuitively, an $h$-tuple $(t_1,\dots,t_h)$ is achievable if there is a solution $(\textbf{u}_1,\ldots, \textbf{u}_{d\ell+1})$ to the linear equations prescribed by the matrix $A$, such that when plugging in the $d\ell+1$ vectors $\textbf{u}_1,\ldots, \textbf{u}_{d\ell+1}$ into some choice of $d\ell+1$ different expressions $|\textbf{o}_{i} \cdot \textbf{x}|$ for $i\in\{1,\dots,h\}$, one obtains the corresponding entries of $(t_1,\dots,t_h)$. This will be relevant in our arguments in the following way: We will consider the polytope $\{\textbf{x}\in \R^d \midd |\textbf{o}_i\cdot \textbf{x}|\le t_i\text{ for }i=1,\dots,h\}$ (note that for $(t_1,\dots,t_h)$ close to $(s_1,\dots,s_h)$, this can be viewed as a perturbation of the polytope $B_1=\{\textbf{x}\in \R^d \midd |\textbf{o}_i\cdot \textbf{x}|\le s_i\text{ for }i=1,\dots,h\}$). Whenever there is a solution $(\textbf{u}_1,\ldots, \textbf{u}_{d\ell+1})$ to the linear equations prescribed by $A$ such that $\textbf{u}_1,\ldots, \textbf{u}_{d\ell+1}$ lie on distinct facets of this polytope, then the $h$-tuple $(t_1,\dots,t_h)$ must be achievable (indeed, each $\textbf{u}_j$ satisfies $|\textbf{o}_{i} \cdot \textbf{u}_j|=t_i$ for the index $i$ corresponding to the facet containing $\textbf{u}_j$). Heuristically speaking, we will show that there are only few achievable $h$-tuples and that therefore this situation can be avoided.

In the setting of Theorem \ref{thm:distinct-distances} (where $m=\lceil(1-\mu)n\rceil$ and $A$ has entries in $\mathbb{Q}(x_1,\dots,x_m)$), let us say that an $h$-tuple $(t_1,\dots,t_h)\in \R^h$ is \emph{achievable} if there exist distinct indices $\varphi(1),\dots,\varphi(d\ell+m+1)\in \{1,\dots,h\}$, real numbers $z_1,\dots,z_m\in \mathbb{R}$ and vectors $\textbf{u}_1,\ldots, \textbf{u}_{d\ell+m+1}\in \R^d$ satisfying the following three conditions:
\begin{compactitem}
    \item[(o)] For every entry $a_{ji}\in \mathbb{Q}(x_1,\dots,x_m)$ of the matrix $A$, the evaluation $a_{ji}(z_1,\dots,z_m)\in \mathbb{R}$ is well-defined.
    \item[(a)] $\textbf{u}_{j}=\sum_{i=1}^{\ell} a_{ji}(z_1,\dots,z_m)\textbf{u}_i$ for $j=1,\dots,d\ell+m+1$, and
    \item[(b)] $|\textbf{o}_{\varphi(j)} \cdot \textbf{u}_j|=t_{\varphi(j)}$ for $j=1,\dots,d\ell+m+1$.
\end{compactitem}
Again, intuitively, an $h$-tuple $(t_1,\dots,t_h)$ is achievable if there is a solution $(\textbf{u}_1,\ldots, \textbf{u}_{d\ell+m+1})$ to the linear equations prescribed by the matrix $A$ with the entries evaluated at $(z_1,\dots,z_m)$, such that the vectors $\textbf{u}_1,\ldots, \textbf{u}_{d\ell+m+1}$ satisfy $|\textbf{o}_{i} \cdot \textbf{u}_j|=t_i$ for distinct choices of indices $i$. This will be relevant in the same way as indicated above in the setting of Theorem \ref{thm:main}.

For an injective function $\varphi: \{1,\dots,d\ell+m+1\}\to \{1,\dots,h\}$, let us say that $(t_1,\dots,t_h)\in \R^h$ is \emph{$\varphi$-achievable} if there are vectors $\textbf{u}_1,\ldots, \textbf{u}_{d\ell+1}\in \R^d$ (and real numbers $z_1,\dots,z_m\in \mathbb{R}$) satisfying the conditions above (more precisely, satisfying the conditions (a) to (b) in the setting of Theorem \ref{thm:main} and satisfying the conditions (o) to (b) in the setting of Theorem \ref{thm:distinct-distances}). Then $(t_1,\dots,t_h)$ is achievable if and only if it is $\varphi$-achievable for some injective function $\varphi: \{1,\dots,d\ell+m+1\}\to \{1,\dots,h\}$.

Intuitively speaking, the following two claims show that in both of our two settings only very few $h$-tuples $(t_1,\dots,t_h)\in \R^h$ are achievable.

\begin{claim}\label{claim:achievable-tuples}
In the setting of Theorem \ref{thm:main}, the set of all achievable $h$-tuples $(t_1,\dots,t_h)\in \R^h$ can be covered by finitely many $(h-1)$-dimensional linear hyperplanes in $\R^h$.
\end{claim}

\begin{cla_proof}
Since there are only finitely many possibilities to choose an injective function $\varphi: \{1,\dots,d\ell+1\}\to \{1,\dots,h\}$, it suffices to show that for every such function $\varphi$ the set of all $\varphi$-achievable $h$-tuples $(t_1,\dots,t_h)\in \R^h$ can be covered by finitely many $(h-1)$-dimensional linear hyperplanes in $\R^h$. So let us fix some injective function $\varphi: \{1,\dots,d\ell+1\}\to \{1,\dots,h\}$.

If $(t_1,\dots,t_h)\in \R^h$ is $\varphi$-achievable, there exist vectors $\textbf{u}_1,\ldots, \textbf{u}_{d\ell+1}\in \R^d$ satisfying conditions (a) and (b) above. By (b), for every $j=1,\dots,d\ell+1$, we have $\textbf{o}_{\varphi(j)} \cdot \textbf{u}_j=t_{\varphi(j)}$ or $\textbf{o}_{\varphi(j)} \cdot \textbf{u}_j=-t_{\varphi(j)}$. Note that there are $2^{d\ell+1}$ possibilities to make these sign choices. Fixing the sign choices, we can express each $t_{\varphi(j)}$ for $j=1,\dots,d\ell+1$  as a linear function of the entries of $\textbf{u}_j$ (with coefficients given by the entries of the fixed vector $\textbf{o}_{\varphi(j)}$). However, by (a) the entries of each vector $\textbf{u}_j$ for $j=1,\dots,d\ell+1$ can be expressed as linear functions of the entries of $\textbf{u}_1,\ldots, \textbf{u}_{\ell}$ (with coefficients given by the entries of the fixed matrix $A=(a_{ji})$). Hence we can express each $t_{\varphi(j)}$ for $j=1,\dots,d\ell+1$  as a linear function of the entries of $\textbf{u}_1,\ldots, \textbf{u}_{\ell}\in \R^d$. These $\ell$ vectors have $d\ell$ entries in total, and so each $t_{\varphi(j)}$ for $j=1,\dots,d\ell+1$ is a linear function of the same $d\ell$ real variables. Since the space of all linear functions in $d\ell$ variables only has dimension $d\ell$, there must be a linear dependence between the $d\ell+1$ linear functions expressing $t_{\varphi(j)}$ for $j=1,\dots,d\ell+1$. In other words, we obtain a linear relationship between the values of $t_{\varphi(j)}$ for $j=1,\dots,d\ell+1$ for each of the $2^{d\ell+1}$ sign choices in condition (b). Each such linear relationship gives rise to an $(h-1)$-dimensional linear hyperplane in $\R^h$. Hence the set of $\varphi$-achievable $h$-tuples $(t_1,\dots,t_h)\in \R^h$ can be covered by $2^{d\ell+1}$ linear hyperplanes in $\R^h$.
\end{cla_proof}

\begin{claim}\label{claim:achievable-tuples-2}
In the setting of Theorem \ref{thm:distinct-distances}, the set of all achievable $h$-tuples $(t_1,\dots,t_h)\in \R^h$ can be covered by the vanishing sets $\{(y_1,\dots,y_h)\in \R^h\midd P(y_1,\dots,y_h)=0\}$ of finitely many non-zero polynomials $P\in \mathbb{R}[y_1,\dots,y_h]$.
\end{claim}

\begin{cla_proof}
Again, there are only finitely many possibilities to choose an injective function $\varphi: \{1,\dots,d\ell+m+1\}\to \{1,\dots,h\}$, and so it suffices to show that for every such injective function the set of all $\varphi$-achievable $h$-tuples $(t_1,\dots,t_h)\in \R^h$ can be covered by the vanishing sets of finitely many non-zero polynomials. So let us fix some injective function $\varphi: \{1,\dots,d\ell+m+1\}\to \{1,\dots,h\}$.

If $(t_1,\dots,t_h)\in \R^h$ is $\varphi$-achievable, there exist vectors $\textbf{u}_1,\ldots, \textbf{u}_{d\ell+1}\in \R^d$ satisfying conditions (o) to (b) above. By (b), for every $j=1,\dots,d\ell+m+1$, we have $\textbf{o}_{\varphi(j)} \cdot \textbf{u}_j=t_{\varphi(j)}$ or $\textbf{o}_{\varphi(j)} \cdot \textbf{u}_j=-t_{\varphi(j)}$. Note that there are $2^{d\ell+m+1}$ possibilities to make these sign choices. Fixing the sign choices, we can express each $t_{\varphi(j)}$ for $j=1,\dots,d\ell+m+1$  as a linear function (with real coefficients) of the entries of $\textbf{u}_j$ (with coefficients given by the entries of the fixed vector $\textbf{o}_{\varphi(j)}$). However, by (a) the entries of each vector $\textbf{u}_j$ for $j=1,\dots,d\ell+m+1$ can be expressed as rational functions of $z_1,\dots,z_m$ and the entries of $\textbf{u}_1,\ldots, \textbf{u}_{\ell}$ (where these rational functions are determined by the entries of the fixed matrix $A=(a_{ji})$). Hence we can express each $t_{\varphi(j)}$ for $j=1,\dots,d\ell+m+1$  as a rational function (with real coefficients) of $z_1,\dots,z_m$ and the entries of $\textbf{u}_1,\ldots, \textbf{u}_{\ell}\in \R^d$. The variables $z_1,\dots,z_m$ and the $d\ell$ entries of the  $\ell$ vectors $\textbf{u}_1,\ldots, \textbf{u}_{\ell}\in \R^d$ together are $d\ell+m$ variables in total, and each $t_{\varphi(j)}$ for $j=1,\dots,d\ell+m+1$ is a rational function (with real coefficients) of these $d\ell+m$ variables (where the coefficients of the rational function are all determined by the fixed vectors $\textbf{o}_{1},\dots,\textbf{o}_{h}$ and the fixed matrix $A\in \mathbb{Q}(x_1,\dots,x_m)^{(d\ell+m+1)\times \ell}$). By Fact \ref{fact-trans-degree} there exists a non-zero real polynomial $P$ in $d\ell+m+1$ variables, such that when plugging the rational functions describing $t_{\varphi(1)},\dots,t_{\varphi(d\ell+m+1)}$ into $P$ the resulting expression is zero. This polynomial $P$ is entirely determined by $\textbf{o}_{1},\dots,\textbf{o}_{h}$ and the $A\in \mathbb{Q}(x_1,\dots,x_m)^{(d\ell+m+1)\times \ell}$ (and the sign choices we made above), and we have $P(t_{\varphi(1)},\dots,t_{\varphi(d\ell+m+1)})=0$. Thus, for each of the $2^{d\ell+m+1}$ sign choices in condition (b), we obtain a non-zero polynomial $P\in \mathbb{R}[y_1,\dots,y_h]$ such that $P(t_1,\dots,t_h)=0$ for every $(t_1,\dots,t_h)\in \R^h$ which is $\varphi$-achievable with these sign choices in condition (b). Hence the set of $\varphi$-achievable $h$-tuples $(t_1,\dots,t_h)\in \R^h$ can be covered by the vanishing sets of $2^{d\ell+m+1}$ non-zero polynomials $P\in \mathbb{R}[y_1,\dots,y_h]$.
\end{cla_proof}

Since $B_1$ is bounded, there exists some $c>0$ such that $||\textbf{b}||_2\le c$ for all $\textbf{b}\in B_1$. For any $(t_1,\dots,t_h)\in \R_{>0}^h$, we can consider the polytope $\{\textbf{x}\in \R^d \midd |\textbf{o}_i\cdot \textbf{x}|\le t_i\text{ for }i=1,\dots,h\}$. The maximum (Euclidean) diameter of the facets of this polytope depends continuously on $(t_1,\dots,t_h)$ (indeed, for each $j=1,\dots,h$, the diameter of the subset of those $\textbf{x}\in \{\textbf{x}\in \R^d \midd |\textbf{o}_i\cdot \textbf{x}|\le t_i\text{ for }i=1,\dots,h\}$ maximizing $\textbf{o}_j\cdot \textbf{x}$ depends continuously on $(t_1,\dots,t_h)$). Note that for $(t_1,\dots,t_h)=(s_1,\dots,s_h)$ this polytope is precisely $B_1$ and so all of its facets have diameter at most $s\cdot \sin (\eta/3)$ (with respect to the Euclidean distance). Hence we can choose some $0<\eps'<\frac{\eps s}{2c}$ such that for every $(t_1,\dots,t_h)\in \R_{>0}^h$ with $s_i\le t_i\le s_i+\eps'$ for $i=1,\dots,h$, all facets of the polytope $\{\textbf{x}\in \R^d \midd |\textbf{o}_i\cdot \textbf{x}|\le t_i\text{ for }i=1,\dots,h\}$ have diameter at most $s\cdot \sin (\eta/2)$ (with respect to the Euclidean distance).

By Claim \ref{claim:achievable-tuples} or Claim \ref{claim:achievable-tuples-2}, respectively, the set of achievable $h$-tuples $(t_1,\dots,t_h)\in \R^h$ can be covered by a finite collection of $(h-1)$-dimensional linear hyperplanes in $\R^h$ or by a finite collection of vanishing sets of non-zero polynomials in $\R^h$. Let $H$ be the union of these hyperplanes or of the vanishing sets of these polynomials. In either case, $H\su \R^d$ is a closed set and $H$ cannot contain the entire box $[s_1,s_1+\eps']\times \dots\times [s_h,s_h+\eps']$. Thus, there exists an $h$-tuple $(t_1,\dots,t_h)\not\in H$ with $s_i\le t_i\le s_i+\eps'$ for $i=1,\dots,h$. 

Since $(t_1,\dots,t_h)\not\in H$ and $H$ is closed, there exists some $0<\delta'<\eps'/2$ such that we also have $(t'_1,\dots,t'_h)\not\in H$ for all $(t'_1,\dots,t'_h)\in \R^h$ satisfying $|t_i'-t_i|\le \delta'$ for $i=1,\dots,h$ (geometrically, $\delta'$ is the radius of some ball in the $\ell_\infty$-norm around $(t_1,\dots,t_h)$ that is disjoint from $H$). This means that there is no achievable $(t'_1,\dots,t'_h)$  with $|t_i'-t_i|\le \delta'$ for $i=1,\dots,h$.

Let us now define
\[B=\{\textbf{x}\in \R^d \midd |\textbf{o}_i\cdot \textbf{x}|\le t_i\text{ for }i=1,\dots,h\}\]
to be the convex $\textbf{0}$-symmetric polytope described by the system of linear inequalities $|\textbf{o}_i\cdot \textbf{x}|\le t_i$ for $i=1,\dots,h$. As $t_i\ge s_i\ge s>0$ for $i=1,\dots,h$, the polytope $B$ contains the Euclidean ball of radius $s$ around the origin, and in particular $B$ contains $\textbf{0}$ in its interior. Furthermore, $B$ is bounded, since it is contained within $\max\{t_1/s_1,\dots,t_h/s_h\}B_1$ (which is bounded). Thus, we have $B\in \B_d$.

Since $s_i\le t_i\le s_i+\eps'$ for $i=1,\dots,h$, by the choice of $\eps'$ we know that all facets of the polytope $B$ have diameter at most $s\cdot \sin (\eta/2)$ (with respect to the Euclidean distance).

We claim that $d_H(B,B_1)\le \eps/2$. First, note that
\[B_1=\{\textbf{x}\in \R^d \midd |\textbf{o}_i\cdot \textbf{x}|\le s_i\text{ for }i=1,\dots,h\}\su \{\textbf{x}\in \R^d \midd |\textbf{o}_i\cdot \textbf{x}|\le t_i\text{ for }i=1,\dots,h\}=B,\]
since $s_i\le t_i$ for $i=1,\dots,t$. Thus, for every $\textbf{b}'\in B_1$ we have $\inf_{\textbf{b}\in B} ||\textbf{b}'-\textbf{b}||_2=0$. Furthermore, for every $\textbf{b}\in B$, we have
\[\left|\textbf{o}_i\cdot \frac{s}{s+\eps'}\textbf{b}\right|=\frac{s}{s+\eps'}\cdot |\textbf{o}_i\cdot \textbf{b}|\le \frac{s}{s+\eps'}\cdot t_i\le \frac{s}{s+\eps'}\cdot (s_i+\eps')\le s_i\]
for $i=1,\dots,h$. Hence, letting $\textbf{b}'=\frac{s}{s+\eps'}\textbf{b}$, we have $\textbf{b}'\in B_1$ and $||\textbf{b}-\textbf{b}'||_2= \frac{\eps'}s\cdot ||\textbf{b}'||_2\le \frac{\eps'c}s< \eps/2$, using that $||\textbf{b}'||_2\le c$ (as $\textbf{b}'\in B_1$) and that $\eps'<\frac{\eps s}{2c}$. This shows that $\inf_{\textbf{b}'\in B_1} ||\textbf{b}'-\textbf{b}||_2< \eps/2$ for all $\textbf{b}\in B$. Thus, we indeed have $d_H(B,B_1)\le \eps/2$.

Now, since $d_H(B,B_1)\le \eps/2$ and $d_H(B_0,B_1)< \eps/2$, we have $d_H(B,B_0)<\eps$ by the triangle inequality. In order to finish the proof of Lemma \ref{lem:A-A-eta-nowhere-dense}, let us now show that there is some $\delta>0$ such that no $B'\in \B_d$ with $d_H(B,B')<\delta$ belongs to $\A_{A,\eta}$.

Let us take $\delta$ such that $0<\delta<\delta'$ and small enough such that $(s\cdot \sin (\eta/2)+2\delta)/(s-\delta)\le \sin \eta$. We claim that then for any two points $\textbf{b}, \textbf{b}'\in \R^d$ with Euclidean distance at most $\delta$ from the same facet of $B$, the angle between the lines $\spn_{\R}(\textbf{b})$ and $\spn_{\R}(\textbf{b}')$ is at most $\eta$. Indeed, recalling that each facet of $B$ has diameter at most $s\cdot \sin (\eta/2)$, by the triangle inequality we have $||\textbf{b}-\textbf{b}'||_2\le 2\delta + s\cdot \sin (\eta/2)$. Hence the sine of the angle between the lines $\spn_{\R}(\textbf{b})$ and $\spn_{\R}(\textbf{b}')$ is at most $||\textbf{b}-\textbf{b}'||_2/||\textbf{b}||_2\le (s\cdot \sin (\eta/2)+2\delta)/(s-\delta)\le \sin \eta$ (noting that $||\textbf{b}||_2\ge s-\delta$, since $B$ contains the Euclidean ball of radius $s$ around the origin and $\textbf{b}$ has distance at most $\delta$ to the boundary of $B$). So this indeed shows that the angle between the lines $\spn_{\R}(\textbf{b})$ and $\spn_{\R}(\textbf{b}')$ is at most $\eta$.

It remains to show that there is no $B'\in \B_d$ with $d_H(B,B')<\delta$
belonging to $\A_{A,\eta}$. Suppose towards a contradiction that there
exists some $B' \in \A_{A,\eta}$ such that $d_H(B,B')<\delta$. By
the definition of $\A_{A,\eta}$, this means that there are vectors
$\textbf{u}_1,\ldots,\textbf{u}_{d\ell+m+1}\in \R^d$ with endpoints
on the boundary of $B'$, such that for all $1\le i<j\le d\ell+m+1$, the angle between the two lines $\spn_{\R}(\textbf{u}_i)$ and $\spn_{\R}(\textbf{u}_j)$ is larger than $\eta$. Furthermore, in the setting of Theorem
\ref{thm:main} (where $m=0$), we have $\textbf{u}_{j}=\sum_{i=1}^{\ell}
a_{ji}\textbf{u}_i$ for $j=1,\dots,d\ell+1$. In the setting of
Theorem \ref{thm:distinct-distances} we also have $z_1,\dots,z_m\in
\mathbb{R}$ such that $a_{ji}(z_1,\dots,z_m)$ is well-defined for all
entries $a_{ji}$ of $A$ and such that $\textbf{u}_{j}=\sum_{i=1}^{\ell}
a_{ji}(z_1,\dots,,z_m)\textbf{u}_i$ for $j=1,\dots,d\ell+m+1$. Note that
this means that condition (a) is satisfied in the setting of Theorem
\ref{thm:main} and conditions (o) and (a) are satisfied in the setting
of Theorem \ref{thm:distinct-distances}.

Interpreting $\textbf{u}_1,\ldots,\textbf{u}_{d\ell+m+1}$ as points in $\R^d$, on the boundary of $B'$, by Lemma \ref{lem:Hausdorff-distance-boundary} there exist points $\textbf{b}_1,\ldots,\textbf{b}_{d\ell+m+1}\in \R^d$ on the boundary of $B$ with $||\textbf{u}_j-\textbf{b}_j||_2<\delta$ for $j=1,\dots,d\ell+m+1$ (recall that $d_H(B',B)<\delta$). We claim that no two of the points $\textbf{b}_1,\ldots,\textbf{b}_{d\ell+m+1}$ can lie on the same facet or on opposite facets of the polytope $B$. Indeed, if two of these points $\textbf{b}_j$ and $\textbf{b}_{j'}$ with $j\neq j'$ were on the same facet, then by the choice of $\delta$ the angle between the lines $\spn_{\R}(\textbf{u}_i)$ and $\spn_{\R}(\textbf{u}_j)$ would be at most $\eta$. Similarly, if $\textbf{b}_j$ and $\textbf{b}_{j'}$ with $j\neq j'$ were on opposite facets, then $\textbf{b}_j$ and $-\textbf{b}_{j'}$ would be on the same facet and so the angle between the lines $\spn_{\R}(\textbf{u}_i)$ and $\spn_{\R}(-\textbf{u}_j)=\spn_{\R}(\textbf{u}_j)$ would also be at most $\eta$. In either case, this is a contradiction to the conditions for the vectors $\textbf{u}_1,\ldots, \textbf{u}_{d\ell+m+1}$. Thus, no two of the points $\textbf{b}_1,\ldots,\textbf{b}_{d\ell+m+1}$ can lie on the same facet or on opposite facets of the polytope $B$.

Since $\textbf{b}_1,\ldots,\textbf{b}_{d\ell+m+1}$ are on the boundary of $B=\{\textbf{x}\in \R^d \midd |\textbf{o}_i\cdot \textbf{x}|\le t_i\text{ for }i=1,\dots,h\}$, for every $j=1,\dots,d\ell+m+1$ we can find some $\varphi(j)\in \{1,\dots,h\}$ such that $|\textbf{o}_{\varphi(j)}\cdot \textbf{b}_j|= t_{\varphi(j)}$. Since no two of the points $\textbf{b}_1,\ldots,\textbf{b}_{d\ell+m+1}$ are on the same facet or on opposite facets of $B$, the indices $\varphi(1),\dots, \varphi(d\ell+m+1)\in \{1,\dots,h\}$ must be distinct, and so they define an injective function $\varphi: \{1,\dots,d\ell+m+1\}\to \{1,\dots,h\}$.

For $j=1,\dots,d\ell+m+1$, let us define $t'_{\varphi(j)}=|\textbf{o}_{\varphi(j)}\cdot \textbf{u}_j|$. Since $||\textbf{u}_j-\textbf{b}_j||_2<\delta<\delta'$ and $||\textbf{o}_{\varphi(j)}||_2=1$, we have $|t'_{\varphi(j)}-t_{\varphi(j)}|\le |\textbf{o}_{\varphi(j)}\cdot \textbf{u}_j-\textbf{o}_{\varphi(j)}\cdot \textbf{b}_j|\le ||\textbf{o}_{\varphi(j)}||_2\cdot ||\textbf{u}_j-\textbf{b}_j||_2<\delta'$ for $j=1,\dots,d\ell+m+1$. For every $i\in \{1,\dots,h\}\setminus \{\varphi(1),\dots, \varphi(d\ell+m+1)\}$, let us define $t'_i=t_i$. Then $(t'_1,\dots,t'_h)\in \R^h$ satisfies $|t'_{i}-t_{i}|<\delta'$ for $i=1,\dots,h$, and so by our choice of $\delta'$, the $h$-tuple $(t'_1,\dots,t'_h)$ is not achievable.

On the other hand, the vectors $\textbf{u}_1,\ldots,\textbf{u}_{d\ell+m+1}\in \R^d$ satisfy $|\textbf{o}_{\varphi(j)}\cdot \textbf{u}_j|=t'_{\varphi(j)}$ for $j=1,\dots,d\ell+m+1$, meaning that condition (b) is satisfied in both the setting of Theorem \ref{thm:main} and the setting of Theorem \ref{thm:distinct-distances}. Since we already saw that (a) is satisfied (and also (o) in the setting of Theorem \ref{thm:distinct-distances}), this means that $(t'_1,\dots,t'_h)$ is $\varphi$-achievable and in particular achievable. This contradiction
shows that indeed there is no $B'\in \B_d$ with $d_H(B,B')<\delta$ belonging to $\A_{A,\eta}$. This finishes the proof of Lemma \ref{lem:A-A-eta-nowhere-dense}, and hence of Theorems \ref{thm:main} and \ref{thm:distinct-distances}.
\end{proof}

\section{Proof of Theorem \ref{thm:hadwiger-nelson}: upper bounding the chromatic number}\label{sec:hadwiger-nelson}
In this section, we prove \Cref{thm:hadwiger-nelson}. As mentioned in the introduction, we will actually prove a more general result giving an upper bound for the chromatic number of the odd distance graph of most $d$-norms, in any dimension $d$. The odd distance graph of a $d$-norm $||.||$ is the graph whose vertices are the points in $\mathbb{R}^d$, where two points are adjacent if and only if the distance between them according to the norm $||.||$ is an odd integer. 

\begin{thm}\label{thm:odd-distance-chromatic}
For any $d \ge 1$, for all $d$-norms $||.||$ besides a meagre set, the odd distance graph of $||.||$ has chromatic number at most $2^d$.
\end{thm}

Since the unit distance graph is a subgraph of the odd distance graph, \Cref{thm:odd-distance-chromatic} implies that for all $2$-norms besides a meagre set, the unit distance graph has chromatic number at most $4$. Combined with the fact that this chromatic number is at least $4$ for any $2$-norm \cite[Theorem 3]{Chilakamarri}, this shows \Cref{thm:hadwiger-nelson}.

\begin{proof}[ of \Cref{thm:odd-distance-chromatic}]
Our goal is to show that every $d$-norm $||.||$ for which the odd distance graph has chromatic number larger than $2^{d}$ has the property that there exist vectors $\textbf{u}_1,\ldots,\textbf{u}_k\in \R^d$ in distinct directions with $||\textbf{u}_1||=\dots=||\textbf{u}_k||=1$ and a subset $I\su \{1,\dots,k\}$, such that we have $\mathbf{u}_j\in \spn_{\mathbb{Q}}(\mathbf{u}_i\midd i\in I)$ for at least $d \cdot |I|+1$ indices $j\in \{1,\dots,k\}$. This would be sufficient, as the arguments in the previous section show that the set of $d$-norms with this property is a meagre set (indeed, by the second half of the proof of \Cref{lem:conatined-in-union-A-A-eta} the unit ball of the norm $||.||$ is contained in the union $\bigcup_{A,\eta} \mathcal{A}_{A,\eta}$ considered in the statement of \Cref{lem:conatined-in-union-A-A-eta}, and this union is a meagre set, as each of the sets $\mathcal{A}_{A,\eta}$ is nowhere dense by \Cref{lem:A-A-eta-nowhere-dense}).

So let $||.||$ be a $d$-norm whose odd distance graph has chromatic number larger than $2^{d}$, and suppose towards a contradiction that it does not satisfy the property above. By the De Bruijn--Erd\H{o}s Theorem, the odd distance graph of $||.||$ must contain a finite subgraph $G$ with chromatic number $\chi(G)>2^d$. Let $\textbf{p}_1,\ldots,\textbf{p}_n \in \mathbb{R}^d$ correspond to the vertices of the subgraph $G$. Furthermore, let $\textbf{u}_1,\ldots,\textbf{u}_k\in \mathbb{R}^d$ be the unit vectors (signed arbitrarily) whose odd multiples give rise to edges of $G$. More precisely, $\textbf{u}_1,\ldots,\textbf{u}_k\in \mathbb{R}^d$ is a list of non-zero vectors in distinct directions (i.e.\ the lines $\spn_{\R}(\textbf{u}_i)$ for $i=1,\dots,k$ are all distinct) with $||\textbf{u}_1||=\dots=||\textbf{u}_k||=1$ such that for every edge $\textbf{p}_x\textbf{p}_y$ of $G$ (with $x,y\in\{1,\dots,n\}$), there exists an index $i\in\{1,\dots,k\}$ and an odd integer $m$ with $\textbf{p}_x-\textbf{p}_y=m\textbf{u}_i$.

Recall that by our assumption about the norm $||.||$, for any subset $I\su \{1,\dots,k\}$ we have $\mathbf{u}_j\in \spn_{\mathbb{Q}}(\mathbf{u}_i\midd i\in I)$ for at most $d \cdot |I|$ indices $j\in \{1,\dots,k\}$. By the Edmonds Matroid Decomposition Theorem \cite{edmonds}, this implies that there is a partition $\{1,\dots,k\}=J_1\cup \dots\cup J_d$ such that for each $\ell=1,\dots,d$ the set vectors $\{\mathbf{u}_i\midd i\in J_\ell\}$ is linearly independent over $\mathbb{Q}$. This gives rise to a partition of the edges of the graph $G$ into subgraphs $G_1,\dots,G_d$, where for any $\ell=1,\dots,d$, an edge $\textbf{p}_x\textbf{p}_y$ of $G$ belongs to the subgraph $G_\ell$ if we have $\textbf{p}_x-\textbf{p}_y=m\textbf{u}_i$ for some index $i\in J_\ell$ and an odd integer $m$.

We claim that each of the subgraphs $G_\ell$ for $\ell=1,\dots,d$ is bipartite. Indeed, suppose that for some $\ell\in \{1,\dots,d\}$ the subgraph $G_\ell$ contains an odd cycle. Walking along the cycle and summing up the corresponding terms $\textbf{p}_x-\textbf{p}_y=m\textbf{u}_i$ for the edges along the cycle gives an integer linear relation between the vectors $\textbf{u}_i$ for $i\in J_\ell$. As the cycle is odd, there must be an index $i\in J_\ell$ such that an odd number of edges on the cycle contribute a term of the form $\textbf{p}_x-\textbf{p}_y=m\textbf{u}_i$ (with an odd integer $m$). For this index $i$, the resulting coefficient of $\textbf{u}_i$ in this integer linear relation is odd, and is therefore in particular non-zero. Hence we obtained a non-trivial integer linear relation between the vectors $\textbf{u}_i$ for $i\in J_\ell$, contradicting the fact that these vectors are linearly independent over $\mathbb{Q}$.

Thus, for $\ell=1,\dots,d$, the graph $G_\ell$ must indeed bipartite, meaning that $\chi (G_i) \le 2$. This implies $\chi(G)=\chi(G_1\cup \ldots \cup G_d) \le \chi(G_1) \cdots \chi(G_d) \le 2^d$, giving the desired final contradiction. 
\end{proof}

\section{Proof of Theorem \ref{t12}: point sets with many unit distances}
\label{S5}
This section is concerned with lower bounds for $U_{\nn}(n)$. Before turning to the proofs, let us introduce a standard piece of notation that we will use throughout the section. Given a collection of sets $S_1,\ldots, S_k \in \R^d$ their \emph{Minkowski sum} $S_1+\dots+S_k$ is defined as the set of all points of the form $\textbf{x}_1+\cdots+\textbf{x}_k$ with $\textbf{x}_i \in S_i$ for $i=1,\dots,k$.

\subsection{Planar norms}
Before proving our lower bound for $U_{\nn}(n)$ in
\Cref{t12} in arbitrary dimension, we begin by showing a slightly stronger lower bound in dimension $2$. We note that, as we will explain in \Cref{S6}, it is possible to obtain a further slight improvement.

\begin{prop}
\label{p11}
For every $2$-norm $\|.\|$, we have \[U_{\|.\|}(n) \geq \left(\frac{1}{\log 3}-o(1)\right)\cdot  n \log n\] for all $n$. In other words, for every $2$-norm $\|.\|$ and every $n$, there exists a set of $n$ points in $\R^2$ such that the number of unit distances according to $\|.\|$ among the $n$ points is at least $\left(\frac{1}{\log 3}-o(1)\right) n \log n$, where the $o(1)$-term tends to zero as $n\to \infty$.
\end{prop}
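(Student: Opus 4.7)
The construction I would use is a Minkowski sum of $k$ ``unit equilateral triangles'' of the norm, yielding $n=3^k$ points with $k\cdot 3^k = n \log_3 n = n\log n/\log 3$ unit distances, combined with an inductive argument for general $n$. A \emph{unit equilateral triangle} is a set $\{\mathbf{0},\mathbf{a},\mathbf{b}\}\subseteq \R^2$ with $\|\mathbf{a}\|=\|\mathbf{b}\|=\|\mathbf{a}-\mathbf{b}\|=1$. Its existence in any $2$-norm $\|.\|$ follows from an intermediate value argument: we may assume $\|.\|$ is strictly convex (otherwise the unit ball's boundary contains a line segment and $U_{\|.\|}(n)=\Omega(n^2)$ already exceeds the target); fix any $\mathbf{a}\in \partial B$ and note that $g(\mathbf{b}):=\|\mathbf{b}-\mathbf{a}\|$, continuous on the topological circle $\partial B$, satisfies $g(\mathbf{a})=0$ and $g(-\mathbf{a})=2$, so it equals $1$ at some $\mathbf{b}$. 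As $\mathbf{a}$ varies over $\partial B$, $\mathbf{b}=\mathbf{b}(\mathbf{a})$ may be selected continuously, producing a one-parameter family of unit triangles $T(\mathbf{a})=\{\mathbf{0},\mathbf{a},\mathbf{b}(\mathbf{a})\}$.

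Given parameters $\mathbf{a}_1,\dots,\mathbf{a}_k\in \partial B$, set $T_i:=T(\mathbf{a}_i)$ and form $P:=T_1+T_2+\cdots+T_k\subseteq \R^2$. Assuming $|P|=3^k$, the number of unit distances is at least $k\cdot 3^k$: for each coordinate $i$ and each of the $3^{k-1}$ fixings of the representatives in the other coordinates, the three elements of $T_i$ give three pairs at unit distance (the differences $\pm\mathbf{a}_i,\pm\mathbf{b}_i,\pm(\mathbf{a}_i-\mathbf{b}_i)$ all have unit norm), and for generic parameters these counts are over disjoint pairs. To justify $|P|=3^k$, note that a collision corresponds to a non-trivial relation $\sum_{i=1}^{k}(c_i\mathbf{a}_i+d_i\mathbf{b}(\mathbf{a}_i))=\mathbf{0}$ with integer coefficients $(c_i,d_i)\in \{(0,0),(\pm 1,0),(0,\pm 1),(\pm 1,\mp 1)\}$, not all $(0,0)$. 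For each of the finitely many non-trivial coefficient patterns, the associated vector equation is a non-trivial continuous relation on $(\partial B)^k$ (one can see failure by perturbing any $\mathbf{a}_j$ with $(c_j,d_j)\neq(0,0)$), hence defines a nowhere-dense subset, and a generic choice of parameters avoids this finite union of bad sets.

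For general $n$, I would induct: writing $n=3m+r$ with $r\in\{0,1,2\}$ and $m=\lfloor n/3\rfloor$, let $S\subseteq \R^2$ be an $m$-point set with $U_{\|.\|}(m)\ge (1-o(1))m\log_3 m$ (by induction). Choosing a unit triangle $T$ generically so that no nonzero difference of $T$ coincides with a difference in $S$, we obtain $|S+T|=3m$ and at least $3U_{\|.\|}(m)+3m$ unit distances in $S+T$ (three copies of each unit pair of $S$, plus three unit pairs within each of the $m$ translates of $T$). Adding $r$ isolated points yields an $n$-point configuration with at least $(1-o(1))\cdot 3m\log_3(3m)=(1-o(1))\cdot n\log_3 n=(1/\log 3-o(1))\cdot n\log n$ unit distances; a direct analysis of the recursion $\epsilon_n\le \epsilon_m(1-1/\log_3 n)+O(1/n)$ for the deficit $\epsilon_n:=1-U_{\|.\|}(n)/(n\log_3 n)$ gives $\epsilon_n\to 0$. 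The main obstacle is justifying the genericity claim in the second paragraph---that each non-trivial collision equation defines a proper subvariety of $(\partial B)^k$---which relies on some mild geometric non-degeneracy of $\partial B$ together with continuity of $\mathbf{b}(\mathbf{a})$; this is manageable under strict convexity of the norm, possibly after first approximating $\|.\|$ by a smoother norm via \Cref{lem:approximate-by-strictly-convex} and transferring the construction back by continuity.
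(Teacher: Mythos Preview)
Your core construction is exactly the paper's: Minkowski sums of $k$ unit equilateral triangles in a strictly convex $2$-norm, giving $3^k$ points with $k\cdot 3^k$ unit distances, and a routine passage to general $n$ (the paper uses a base-$3$ decomposition rather than your recursion, but both give the same $o(1)$ loss).

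The real difference is in the genericity step, where you yourself flag the obstacle. Your argument that each collision relation $\sum_i(c_i\mathbf{a}_i+d_i\mathbf{b}(\mathbf{a}_i))=\mathbf{0}$ cuts out a nowhere-dense set of $(\partial B)^k$ is not complete as stated: you would need to know that the map $\mathbf{a}\mapsto c\mathbf{a}+d\,\mathbf{b}(\mathbf{a})$ is not locally constant for each relevant $(c,d)$, and for a merely strictly convex norm (no smoothness assumed) this is not automatic from a perturbation remark. Your proposed fix of passing to an approximating smoother norm via \Cref{lem:approximate-by-strictly-convex} does not work, since the notion of unit distance changes with the norm and there is no mechanism to ``transfer the construction back''. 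The paper sidesteps all of this with a much simpler one-at-a-time counting argument: having built $S_1,\dots,S_{i-1}$, it chooses $S_i=\{\mathbf{0},\mathbf{x},\mathbf{y}(\mathbf{x})\}$ and shows that for each pair $\mathbf{u},\mathbf{v}\in S_1+\cdots+S_{i-1}$ at most six values of $\mathbf{x}\in\partial B$ cause a collision, using only the elementary fact that in a strictly convex planar norm two unit circles meet in at most two points. Since there are finitely many such pairs and infinitely many $\mathbf{x}\in\partial B$, a good $\mathbf{x}$ exists. This replaces your topological genericity claim with a finite count and needs no continuity of $\mathbf{b}(\mathbf{a})$ at all.
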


If the norm $\|.\|$ is not strictly convex (i.e.\ if the boundary of the unit ball of the norm contains a line segment), then we can find $n$ points with at least $\lfloor n^2/4 \rfloor$ unit distances. Indeed, in this case, the corresponding unit distance graph contains the complete bipartite graph $K_{m,m}$ for every $m$. So to prove \Cref{p11}, we may assume that the norm is strictly convex.

The main ingredient in the proof is to show that any power of a triangle is a subgraph of the unit distance graph of any strictly convex norm on $\R^2$. More concretely, for any $k$, we find a collection of $k$ triangles with all sides having unit length according to the given norm, and take the Minkowski sum of the vertex sets of these triangles. This yields $3^k$ points and the corresponding unit distance graph is a $k$-fold product of the triangle $K_3$ (in the usual graph theoretic sense of graph products). To find these triangles, we prove the following simple lemma.

\begin{lem}
\label{l21}
Let $\|.\|$ be a strictly convex $2$-norm. Then for any integer $k$ there exist subsets $S_1, S_2, \ldots ,S_k\su \R^2$ of size $|S_1|=\dots=|S_k|=3$ with $|S_1+\dots+S_k|=3^k$ and such that for each $i=1,\dots,k$ any two distinct points in $S_i$ have unit distance according to $||.||$.
\end{lem}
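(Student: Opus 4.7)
By translating each $S_i$ (which does not affect $|S_1+\dots+S_k|$), we may assume $S_i=\{\mathbf{0},\mathbf{u}_i,\mathbf{v}_i\}$ with $\|\mathbf{u}_i\|=\|\mathbf{v}_i\|=\|\mathbf{u}_i-\mathbf{v}_i\|=1$. Setting $D_i := S_i-S_i=\{\mathbf{0},\pm\mathbf{u}_i,\pm\mathbf{v}_i,\pm(\mathbf{u}_i-\mathbf{v}_i)\}$, the desired condition $|S_1+\dots+S_k|=3^k$ is \emph{equivalent} to saying that the only $(d_1,\dots,d_k)\in D_1\times\dots\times D_k$ with $d_1+\dots+d_k=\mathbf{0}$ is $d_1=\dots=d_k=\mathbf{0}$. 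For the base case $k=1$, a single unit equilateral triangle exists by a standard intermediate value argument: fix a unit vector $\mathbf{u}$; the continuous function $\mathbf{v}\mapsto\|\mathbf{u}-\mathbf{v}\|$ on the unit sphere takes the values $0$ and $2$ at $\mathbf{u}$ and $-\mathbf{u}$, and hence attains the value $1$.

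For the inductive step, suppose $S_1,\dots,S_{j-1}$ have been chosen. Splitting the defining condition for $j$ according to whether $d_j=\mathbf{0}$ and invoking the inductive hypothesis (together with the $\pm$-symmetry of each $D_i$), it suffices to find $S_j$ satisfying $(D_j\setminus\{\mathbf{0}\})\cap(D_1+\dots+D_{j-1})=\emptyset$. Every vector in $D_j\setminus\{\mathbf{0}\}=\{\pm\mathbf{u}_j,\pm\mathbf{v}_j,\pm(\mathbf{u}_j-\mathbf{v}_j)\}$ has norm one, so it is enough to ensure that none of these six edge vectors lies in the finite set $W$ of norm-one elements of $D_1+\dots+D_{j-1}$.

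To arrange this, I would parametrize candidate triangles through $\mathbf{0}$ by one of their vertices $\mathbf{u}$: for each unit vector $\mathbf{u}$, the base-case argument applied on each of the two arcs of the unit sphere $\partial B$ bounded by $\pm\mathbf{u}$ produces a unit vector $\mathbf{v}$ with $\|\mathbf{u}-\mathbf{v}\|=1$, hence a triangle $\{\mathbf{0},\mathbf{u},\mathbf{v}\}$. For each fixed $\mathbf{w}\in W$, the set of $\mathbf{u}$ for which the resulting triangle has $\mathbf{w}$ as an edge is nowhere dense in $\partial B$: the conditions $\mathbf{u}=\pm\mathbf{w}$ exclude two points, while the conditions $\mathbf{v}=\pm\mathbf{w}$ and $\pm(\mathbf{u}-\mathbf{v})=\mathbf{w}$ each force $\mathbf{u}$ to lie in an intersection $\partial B\cap(\mathbf{w}'+\partial B)$ for a fixed unit vector $\mathbf{w}'$. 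Assuming each such intersection contains no sub-arc of $\partial B$, the union over the finite set $W$ is a closed nowhere-dense subset of $\partial B$ whose complement is non-empty by Baire category; any $\mathbf{u}$ in this complement yields a valid $S_j$, completing the induction.

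\textbf{The main obstacle} is verifying that, for $\mathbf{w}'\neq \mathbf{0}$, the intersection $\partial B\cap(\mathbf{w}'+\partial B)$ contains no sub-arc of $\partial B$. This is where strict convexity enters essentially: if such an arc $\gamma$ existed, then $\gamma$ and its translate $\gamma-\mathbf{w}'$ would be two disjoint arcs of $\partial B$ realizing the same tangent-direction function, contradicting the fact that the outward-normal direction rotates strictly monotonically around the boundary of any strictly convex planar body. Without strict convexity (for example, for a polygonal norm), $\partial B$ can contain parallel line segments and hence infinitely many equilateral triangles sharing an edge direction, which would break this step of the argument and is precisely why the strict convexity hypothesis in the lemma cannot be dropped.
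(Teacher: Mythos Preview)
Your proof is correct and follows the same inductive skeleton as the paper: build $S_1,\dots,S_k$ one at a time, and at step $j$ choose a unit equilateral triangle $\{0,\mathbf{u}_j,\mathbf{v}_j\}$ whose nonzero edge vectors avoid a certain finite ``forbidden'' set. The difference is in how strict convexity is invoked to show the forbidden set leaves room. The paper quotes the sharper fact (from \cite{Br0}) that in a strictly convex planar norm two unit circles meet in at most two points; this gives, for each pair of already-placed points, at most six bad choices of $\mathbf{u}_j$, hence only finitely many bad $\mathbf{u}_j$ in total, and one picks any of the infinitely many remaining points. Your route replaces this count by the weaker qualitative statement that $\partial B\cap(\mathbf{w}'+\partial B)$ contains no sub-arc, and then appeals to Baire category. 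Both work; the paper's version is shorter and avoids the topological overhead.

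Your verification of the ``no sub-arc'' claim is the one place that is a bit informal. The phrase ``tangent-direction function'' and ``the outward-normal direction rotates strictly monotonically'' needs care when $\partial B$ is not $C^1$. The clean way to say it: if an arc $\gamma\subset\partial B$ also lies on $\mathbf{w}'+\partial B$, then $\gamma-\mathbf{w}'$ is also an arc of $\partial B$; at any interior point $p\in\gamma$ the cone of outward normals to $B$ is determined locally by $\gamma$, and hence coincides with the cone of outward normals to $B$ at $p-\mathbf{w}'$. Strict convexity says precisely that each supporting direction occurs at a unique boundary point, forcing $p=p-\mathbf{w}'$, a contradiction. Once this is stated carefully your argument goes through, though you could shortcut all of it by simply using the two-point intersection fact as the paper does.
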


Note that the condition $|S_1+\dots+S_k|=3^k$ means precisely that all the $3^k$ points of the form $\textbf{x}_1+\textbf{x}_2+ \cdots +\textbf{x}_k$ with $\textbf{x}_i \in S_i$ for $i=1,\dots,k$ are distinct. Furthermore, note that the second condition means that for each $i=1,\dots,k$, the three points in $S_i$ determine a triangle with all three side lengths equal to one according to the norm $||.||$.

\begin{proof}
We construct the sets $S_1, S_2, \ldots $ one by one, maintaining the property that after $S_1, S_2, \ldots ,S_i$ have been determined, we have $|S_1+\dots+S_i|=3^i$.
We need the simple fact (see, for example, \cite[Lemma 2]{Br0}) that for any two distinct points $\textbf{a}_1$ and $\textbf{a}_2$ in a strictly convex $2$-dimensional real normed space $(\R^2,||.||)$ there are at most two points $\mathbf{b}\in \R^2$ satisfying both $||\mathbf{b}-\textbf{a}_1||=1$ and $||\mathbf{b}-\textbf{a}_2||=1$ (i.e.\ having unit distance from both $\textbf{a}_1$ and $\textbf{a}_2$).

Assume $S_1, S_2, \ldots, S_{i-1}$ have already been constructed, and let us now choose $S_i$. For any point $\textbf{x}\in \R^2$ with $||\textbf{x}||=1$ (i.e.\ for any point $\textbf{x}$ on the boundary of the unit ball of the norm $\nn$), let us imagine that we move a point $\textbf{y}$ along the boundary of the unit ball of $\nn$ from $\textbf{x}$ to the antipodal point $-\textbf{x}$. By continuity of the distance $||\textbf{y}-\textbf{x}||$ as $\textbf{y}$ moves, at some point in this process we must have $||\textbf{y}-\textbf{x}||=1$. In other words, for any point $\textbf{x}\in \R^2$ with $||\textbf{x}||=1$, there must be a point $\textbf{y}(\textbf{x})\in \R^2$ with $||\textbf{y}(\textbf{x})||=1$ and $||\textbf{y}(\textbf{x})-\textbf{x}||=1$. We will choose the set $S_{i}$ to consist of $\textbf{0}$, $\textbf{x}$ and $\textbf{y}=\textbf{y}(\textbf{x})$, for an appropriate choice of $\textbf{x}$ with $||\textbf{x}||=1$. Note that then any two distinct points in $S_i$ have unit distance according to $||.||$.

In order to ensure that $|S_1+\dots+S_i|=3^i$, we need to to ensure that the sets $\{\textbf{u}+\textbf{0},\textbf{u}+\textbf{x}, \textbf{u}+\textbf{y}\}$ are pairwise disjoint for all $\textbf{u}\in S_1+S_2+ \ldots +S_{i-1}$. If the sets $\{\textbf{u}+\textbf{0},\textbf{u}+\textbf{x}, \textbf{u}+\textbf{y}\}$ and $\{\textbf{v}+\textbf{0},\textbf{v}+\textbf{x}, \textbf{v}+\textbf{y}\}$ intersect for two distinct elements $\textbf{u}, \textbf{v}\in S_1+S_2+ \ldots +S_{i-1}$, then one of the differences $\textbf{x}-\textbf{0},\textbf{y}-\textbf{0},\textbf{x}-\textbf{y}$ must be of the form $\pm (\textbf{u}-\textbf{v})$. We claim that for any two distinct $\textbf{u}, \textbf{v}\in S_1+S_2+ \ldots +S_{i-1}$, there are at most six choices for $\mathbf{x}$ (with $||\textbf{x}||=1$) such that this happens. Indeed, there are clearly at most two choices for $\mathbf{x}$ with $\textbf{x}=\textbf{x}-\textbf{0}\in \{\pm (\textbf{u}-\textbf{v})\}$. Note that $\textbf{x}$ has unit distance from both $\textbf{y}$ and $\textbf{x}-\textbf{y}$ (since $||\textbf{y}-\textbf{x}||=1$ and $||\textbf{y}||=1$). So if $\textbf{y}=\textbf{y}-\textbf{0}\in \{\pm (\textbf{u}-\textbf{v})\}$ or $\textbf{x}-\textbf{y}\in \{\pm (\textbf{u}-\textbf{v})\}$, then $\textbf{x}$ must have unit distance from $\textbf{u}-\textbf{v}$ or from $-(\textbf{u}-\textbf{v})$. But by the above-mentioned fact, there can be at most two choices for $\textbf{x}$ with $||\textbf{x}||=1$ and $||\textbf{x}-(\textbf{u}-\textbf{v})||=1$ and at most two choices for $\textbf{x}$ with $||\textbf{x}||=1$ and $||\textbf{x}+(\textbf{u}-\textbf{v})||=1$. Thus, there are indeed at most six choices for $\mathbf{x}$ (with $||\textbf{x}||=1$) such that the sets $\{\textbf{u}+\textbf{0},\textbf{u}+\textbf{x}, \textbf{u}+\textbf{y}\}$ and $\{\textbf{v}+\textbf{0},\textbf{v}+\textbf{x}, \textbf{v}+\textbf{y}\}$ intersect.

Since there are only finitely many choices for distinct $\textbf{u}, \textbf{v}\in S_1+S_2+ \ldots +S_{i-1}$, this means that there are only finitely many choices for $\textbf{x}$ for which the desired condition $|S_1+\dots+S_i|=3^i$ fails. Since there are infinitely many points $\mathbf{x}\in \R^2$ with $||\textbf{x}||=1$, we can indeed take $S_{i}=\{\textbf{0}, \textbf{x}, \textbf{y}(\textbf{x})\}$ for some suitably chosen $\mathbf{x}\in \R^2$ (with $||\textbf{x}||=1$) such that $|S_1+\dots+S_i|=3^i$.
\end{proof}

Proposition \ref{p11} is an easy consequence of the last lemma.

\begin{proof}[ of \Cref{p11}]
As discussed previously we may assume that the norm $\|.\|$ is strictly convex. Now for any $k$, we can take sets $S_1,\dots,S_k$ as in \Cref{l21} and consider the Minkowski sum $S=S_1+S_2+ \ldots +S_k$. This gives a set $S\su \R^2$ of size $|S|=3^k$, such that  every point in $S$ has unit distance according to $\|.\|$ from at least $2k$ other points in $S$ (namely, all the points in $S$ obtained by changing exactly one summand in the representation $\mathbf{x}_1+\dots+\mathbf{x}_k$ with $\mathbf{x}_i\in S_i$ for $i=1,\dots,k$). This gives us a set of $3^k$ points in $\R^2$ with $k\cdot 3^k$ unit distances according to $\|.\|$, which in particular shows that the claimed bound holds without the $o(1)$-term when $n$ is a power of three.

To obtain the asymptotic bound for every $n$, let us write $n$ in base three, so $n=a_k\cdot 3^k+a_{k-1}\cdot 3^{k-1}+\ldots +a_0$, where $k=\lfloor \log_3 n \rfloor$ and $a_k,\dots,a_0\in\{0,1,2\}$. For each $i=0,\dots,k$, we now use the construction above to find $a_i$ sets of size $3^i$, translated if necessary to ensure that all the points in all of these sets are distinct. In total we obtain $a_k\cdot 3^k+a_{k-1}\cdot 3^{k-1}+\ldots +a_0=n$ points, and among these $n$ points there are at least
\[a_k\cdot k \cdot 3^k+a_{k-1}\cdot (k-1) \cdot 3^{k-1}+\ldots +a_1 \cdot 1 \cdot 3=kn-3^k\cdot \sum_{i=1}^{k}a_{k-i} \cdot i\cdot 3^{-i}\ge kn-2\cdot 3^k\cdot \sum_{i=1}^{k}i\cdot 3^{-i}\ge kn-\frac32 n\]
unit distances. Since $k-\frac32=\left(\frac{1}{\log 3}-o(1)\right)\log n$, this completes the proof. 
\end{proof}

\subsection{Higher dimension}

In this section, we prove Theorem \ref{t12}. We start by showing that the unit distance graph of any $d$-norm contains a complete bipartite graph with one vertex class of size $d-1$ and the other vertex class of infinite size. To do so, we need the following result, known as the Hurewicz Dimension Lowering Theorem (see \cite[Theorem 3.3.10, p.\ 200]{Eng}). The notion of dimension in this theorem is the usual Lebesgue covering dimension, as described in \cite{Eng}.

\begin{thm}[Hurewicz Dimension Lowering Theorem]
\label{t21}
Let $X$ and $Y$ be metric spaces, and assume that $X$ is compact. Let $f: X \to Y$ be a continuous map, such that all fibers $f^{-1}(y)$ for $y \in Y$ have dimension at most $k$. Then we have $\dim(X) \leq k+\dim(Y)$.
\end{thm}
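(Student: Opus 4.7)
My plan is to prove the Hurewicz Dimension Lowering Theorem by induction on $n=\dim Y$, with $k$ fixed, following the classical strategy of dimension theory. A standing observation will be that compactness of $X$ together with continuity of $f$ makes $f$ a closed map, which is indispensable in both the base case and the inductive step.

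For the base case $n=0$: here $Y$ has a basis of clopen sets, and I will argue $\dim X\le k$ directly. Take any finite open cover $\mathcal{U}$ of $X$ (sufficient by compactness of $X$). For each $y\in Y$, the compact fibre $F_y=f^{-1}(y)$ has dimension at most $k$, so the trace $\mathcal{U}|_{F_y}$ admits a finite refinement of order $\le k+1$ by relatively open subsets of $F_y$. I would extend each such set to an open subset of $X$; then compactness of $F_y$ together with closedness of $f$ produces a clopen neighbourhood $W_y\ni y$ in $Y$ whose preimage $f^{-1}(W_y)$ is covered by these extensions, still with order $\le k+1$. Compactness of $Y$ yields a finite subcover, and in a $0$-dimensional space this subcover can be refined to a pairwise \emph{disjoint} clopen family $\{W_\alpha\}$; gluing the local refinements over disjoint pieces $f^{-1}(W_\alpha)$ then yields a refinement of $\mathcal{U}$ of order $\le k+1$, proving $\dim X\le k$.

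For the inductive step: assume the result for all $Y$ with $\dim Y<n$, and let $\dim Y=n$. I would invoke the decomposition theorem for separable metric spaces to write $Y=A\cup B$ with $\dim A\le n-1$ and $\dim B\le 0$. The inductive hypothesis applied to $f|_{f^{-1}(A)}\colon f^{-1}(A)\to A$ would give $\dim f^{-1}(A)\le k+(n-1)$, and a variant of the base case argument for $f|_{f^{-1}(B)}\colon f^{-1}(B)\to B$ would give $\dim f^{-1}(B)\le k$. The countable sum (addition) theorem for covering dimension in metric spaces then combines these bounds to $\dim X\le k+n$, completing the induction.

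The principal obstacle is the inductive step: in general the pieces $A$ and $B$ are not closed in $Y$, so $f^{-1}(A)$ and $f^{-1}(B)$ need not be compact and the clean base-case refinement argument does not literally apply to $f|_{f^{-1}(B)}$. The classical remedy is to decompose along $F_\sigma$ subsets, exploit $\sigma$-discrete (or $\sigma$-locally finite) refinements available in metric spaces, and invoke the countable additivity of the covering dimension; these technical details are carried out in Engelking's exposition, to which the excerpt refers. For the purposes of the present paper the statement may safely be used as a black box from dimension theory.
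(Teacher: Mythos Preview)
The paper does not prove this theorem at all; it is stated as a known result and cited from Engelking's book \cite[Theorem 3.3.10, p.\ 200]{Eng}. Your sketch of the classical inductive proof is reasonable in outline, and you yourself correctly identify the technical obstacle (the pieces $A$ and $B$ in the decomposition need not be closed, so the restricted maps need not be closed) and correctly defer to Engelking for the resolution. In effect, both you and the paper treat this statement as a black box from dimension theory, so there is nothing to compare.
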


The following lemma is the key ingredient for our lower bound result in \Cref{t12}.

\begin{lem}
\label{c22}
Let $\nn$ be a $d$-norm, and let $\partial B=\{\textbf{x}\in\R^d \midd ||\textbf{x}||=1\}$ be the boundary of the corresponding unit ball. Furthermore, consider distinct points $\textbf{y}_1, \textbf{y}_2 \ldots ,\textbf{y}_{d-1}\in \partial B$, and let $\eps>0$ be a real number. Then there exist points $\textbf{x}_1, \textbf{x}_2 \ldots ,\textbf{x}_{d-1}\in \R^d$ satisfying $\|\textbf{x}_i-\textbf{y}_i\|_2 \leq \eps$ for $i=1,\dots,d-1$, such that the intersection $\bigcap_{i=1}^{d-1} (\textbf{x}_i+\partial B)$ is infinite.
\end{lem}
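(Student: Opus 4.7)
The plan is to apply the Hurewicz Dimension Lowering Theorem (\Cref{t21}) after a reparameterization that makes the desired intersection visible as a fiber of a continuous map whose domain strictly outranks its codomain in topological dimension. Since $B$ is $\textbf{0}$-symmetric, $-\textbf{y}_i \in \partial B$ for every $i$, and I will write each shift in the form $\textbf{x}_i = \textbf{p} - \textbf{v}_i$, where $\textbf{p}$ is constrained to the closed Euclidean ball $K = \{\textbf{q} \in \R^d : \|\textbf{q}\|_2 \le \eps/2\}$ and $\textbf{v}_i$ is constrained to the closed patch $P_i = \partial B \cap \{\textbf{v} \in \R^d : \|\textbf{v}+\textbf{y}_i\|_2 \le \eps/2\}$. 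Under this parameterization, the condition $\textbf{p}-\textbf{x}_i \in \partial B$ translates into the automatic constraint $\textbf{v}_i \in \partial B$, so an infinite family of $\textbf{p}$'s that is compatible with a single tuple $(\textbf{x}_1,\dots,\textbf{x}_{d-1})$ produces an infinite subset of $\cap_{i=1}^{d-1}(\textbf{x}_i+\partial B)$; moreover, the triangle inequality gives $\|\textbf{x}_i-\textbf{y}_i\|_2 \le \|\textbf{p}\|_2 + \|\textbf{v}_i+\textbf{y}_i\|_2 \le \eps$.

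Concretely, I consider the compact metric space $T = K \times P_1 \times \cdots \times P_{d-1}$ and the continuous map
\[f : T \to \R^{d(d-1)}, \qquad (\textbf{p}, \textbf{v}_1, \dots, \textbf{v}_{d-1}) \mapsto (\textbf{p}-\textbf{v}_1, \dots, \textbf{p}-\textbf{v}_{d-1}).\]
Since $\partial B$ is homeomorphic to $S^{d-1}$ via radial projection from $\textbf{0}$ (as the boundary of a $\textbf{0}$-symmetric convex body containing $\textbf{0}$ in its interior), each patch $P_i$ is a closed subset of $\partial B$ that contains an open topological $(d-1)$-disc neighborhood of $-\textbf{y}_i$. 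Consequently $T$ contains a subset homeomorphic to an open subset of $\R^{d+(d-1)^2}$, giving $\dim T \ge d + (d-1)^2 = d^2-d+1 > d^2-d = \dim \R^{d(d-1)}$. Applying \Cref{t21} to $f : T \to f(T) \su \R^{d(d-1)}$, I conclude that some non-empty fiber $f^{-1}(\textbf{x}_1,\dots,\textbf{x}_{d-1})$ has topological dimension at least $1$, and is therefore infinite (any finite metric space has dimension $0$).

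Unwinding this fiber: fixing the output $(\textbf{x}_1,\dots,\textbf{x}_{d-1})$ pins down each coordinate $\textbf{v}_i$ in the fiber as $\textbf{p}-\textbf{x}_i$, so the fiber is in bijection with the set of $\textbf{p} \in K$ satisfying $\textbf{p}-\textbf{x}_i \in P_i \su \partial B$ for every $i$. This set is infinite, and hence so is the intersection $\cap_{i=1}^{d-1}(\textbf{x}_i+\partial B)$; the Euclidean bound from the first paragraph ensures $\|\textbf{x}_i-\textbf{y}_i\|_2 \le \eps$.

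The main obstacle is justifying the dimension lower bound $\dim T \ge d^2-d+1$ rigorously. This reduces to showing that each patch $P_i$ has topological dimension exactly $d-1$, which I handle by exhibiting an open topological $(d-1)$-disc inside $P_i$ through the radial homeomorphism between $\partial B$ and $S^{d-1}$; the dimension of the product then follows from the fact that $T$ contains a subset homeomorphic to an open ball in $\R^{d^2-d+1}$, together with monotonicity of covering dimension. Everything else is bookkeeping, and the Hurewicz inequality closes the argument by forcing the required positive-dimensional (hence infinite) fiber.
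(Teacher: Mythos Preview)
Your proof is correct and follows essentially the same approach as the paper: both arguments parameterize the configuration space as (a subset of) $K\times P_1\times\cdots\times P_{d-1}$ with $K$ a small ball and $P_i$ a small patch on $\partial B$, compute $\dim T\ge d+(d-1)^2>d(d-1)$, and then invoke the Hurewicz Dimension Lowering Theorem on the projection $(\textbf{p},\textbf{v}_1,\dots,\textbf{v}_{d-1})\mapsto (\textbf{p}-\textbf{v}_1,\dots,\textbf{p}-\textbf{v}_{d-1})$ to force a fiber of positive dimension. The only cosmetic difference is that the paper introduces the intermediate sets $U$ and $U'$ and the homeomorphism $u$ before restricting to the same product set, whereas you work with the product $T$ directly; up to the sign convention $\textbf{v}_i\leftrightarrow -\textbf{z}_i$ (using $-\textbf{y}_i$ versus $\textbf{y}_i$ as the center of the patch), the two arguments coincide.
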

\begin{proof}
Define a subset $U$ of $\R^d \times (\R^d)^{d-1}$ by
\[U:=\{(\textbf{x},\textbf{x}_1,\ldots ,\textbf{x}_{d-1})\midd  \textbf{x}_i-\textbf{x}\in \partial B\text{ for }i=1,\dots,d-1\}.\]
Note that $U$ is homeomorphic to $\R^d \times (\partial B)^{d-1}$, with a homeomorpism $u:U \to \R^d \times  (\partial B)^{d-1}$ given by $u(\textbf{x},\textbf{x}_1,\ldots,\textbf{x}_{d-1})=(\textbf{x},\textbf{x}_1-\textbf{x},\ldots,\textbf{x}_{d-1}-\textbf{x})$. Now, let us define\begin{equation*}
U':=\{(\textbf{x},\textbf{x}_1, \ldots ,\textbf{x}_{d-1}) \in U \midd \|\textbf{x}_i-\textbf{y}_i\|_2\leq 
\eps \text{ for }i=1,\dots,d-1\}.
\end{equation*}

Observe that the image of $U'$ under the homeomorpism $u:U \to \R^d \times  (\partial B)^{d-1}$ defined above contains the closed set
\begin{equation}\label{eq:contained}
\{\textbf{x}\in \R^d \midd ||\textbf{x}||_2\le \eps/2\} \times \{\textbf{z}_1\in \partial B \midd ||\textbf{z}_1-\textbf{y}_1||_2\le \eps/2\} \times \dots \times  \{\textbf{z}_{d-1}\in \partial B \midd ||\textbf{z}_{d-1}-\textbf{y}_{d-1}||_2\le \eps/2\},
\end{equation}
Indeed, given $(\textbf{x},\textbf{z}_1,\ldots,\textbf{z}_{d-1})$ in this set, we have $u(\textbf{x},\textbf{x}_1,\ldots,\textbf{x}_{d-1})= (\textbf{x},\textbf{z}_1,\ldots,\textbf{z}_{d-1})$ for $(\textbf{x},\textbf{x}_1,\ldots,\textbf{x}_{d-1})\in U'$ given by $\textbf{x}_i=\textbf{z}_i+\textbf{x}$ for $i=1,\dots,d-1$ (then $\textbf{x}_i-\textbf{x}\in \partial B$ and $\|\textbf{x}_i-\textbf{y}_i\|_2=\|\textbf{x}+\textbf{z}_i-\textbf{y}_i\|_2\le \|\textbf{x}\|_2+\|\textbf{z}_i-\textbf{y}_i\|_2\le \eps$).

For $i=1,\dots,d-1$, we have $\textbf{y}_i \in \partial B$ and hence the set $\{\textbf{z}_i\in \partial B \midd ||\textbf{z}_i-\textbf{y}_i||_2\le \eps/2\}$ contains a closed subset that is homeomorphic to $[0,1]^{d-1}$ (indeed, the map $\textbf{z}\mapsto \textbf{z}/||\textbf{z}||_2$ defines a homeomorphism from this set to a subset of the $(d-1)$-dimensional unit sphere in $\R^d$ containing some spherical cap of positive radius). Thus, the set in \eqref{eq:contained} has a closed subset that is homeomorphic to $[0,1]^{d+(d-1)^2}$. Therefore the dimension of the closed set in \eqref{eq:contained}  is at least $d+(d-1)^2$, and consequently $\dim(U')\ge d+(d-1)^2$.

Furthermore, the set $U'$ is closed and bounded (for boundedness, note that $\|\textbf{x}\|\le \|\textbf{x}_1\|+1$ and $\|\textbf{x}_i-\textbf{y}_i\|_2\leq 
\eps$ for $i=1,\dots,d-1$ for all $(\textbf{x},\textbf{x}_1,\ldots,\textbf{x}_{d-1})\in U'$). Thus, $U'$ is compact. Now, let us  consider the continuous projection map $f: U' \to (\R^d)^{d-1}$  given by $f(\textbf{x},\textbf{x}_1,\textbf{x}_2, \ldots ,\textbf{x}_{d-1})=(\textbf{x}_1,\textbf{x}_2, \ldots ,\textbf{x}_{d-1})$.

Since $\dim ((\R^d)^{d-1})=d(d-1)<\dim(U')$, by the Hurewicz Dimension Lowering Theorem applied to the projection map $f: U' \to (\R^d)^{d-1}$, there exist points $\textbf{x}_1,\textbf{x}_2, \ldots ,\textbf{x}_{d-1}\in \R^d$ such that the fiber $f^{-1}(\textbf{x}_1,\textbf{x}_2, \ldots ,\textbf{x}_{d-1})$ has dimension at least one. Then this fiber must be infinite, so there are infinitely many points $\textbf{x}\in \R^d$ with $(\textbf{x},\textbf{x}_1,\textbf{x}_2, \ldots ,\textbf{x}_{d-1})\in U'$. By the definition of $U'$, this implies $\|\textbf{x}_i-\textbf{y}_i\|_2 \leq \eps$ for $i=1,\dots,d-1$. Furthermore, each of the infinitely many points $\textbf{x}$ with $(\textbf{x},\textbf{x}_1,\textbf{x}_2, \ldots ,\textbf{x}_{d-1})\in U'\su U$ satisfies $\textbf{x}_i-\textbf{x}\in \partial B$ for $i=1,\dots,d-1$, meaning that $\textbf{x}-\textbf{x}_i\in \partial B$ and $\textbf{x}\in \textbf{x}_i+\partial B$. Thus, each of these points $\textbf{x}$ lies in the intersection $\bigcap_{i=1}^{d-1} (\textbf{x}_i+\partial B)$, so this intersection is infinite.
\end{proof}

An easy consequence of the last lemma is the following.
\begin{lem}
\label{l23}
Let $k$ be a positive integer and let $\|. \|$ be a $d$-norm. Then there exist points $\textbf{x}_1,\ldots, \textbf{x}_{d-1} \in \R^d$ with $|\{\textbf{x}_1,2\textbf{x}_1,\dots,k\textbf{x}_1\}+\dots+\{\textbf{x}_{d-1},2\textbf{x}_{d-1},\dots,k\textbf{x}_{d-1}\}|=k^{d-1}$ and an infinite set $S\su \R^d$ such that every point $\textbf{z}\in S$ satisfies $\|\textbf{z}-\textbf{x}_i\|=1$ for $i=1,\dots,d-1$.
\end{lem}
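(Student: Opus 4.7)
The plan is to observe that Lemma \ref{l23} is essentially an immediate corollary of \Cref{c22}: the condition $\|\textbf{z} - \textbf{x}_i\| = 1$ is the same as $\textbf{z} \in \textbf{x}_i + \partial B$, so the infinite intersection $\bigcap_{i=1}^{d-1}(\textbf{x}_i + \partial B)$ produced by \Cref{c22} is already exactly the set $S$ that we want. The only extra requirement is that the points $\textbf{x}_1, \ldots, \textbf{x}_{d-1}$ be linearly independent over $\R$, because in that case any equality $\sum_i j_i \textbf{x}_i = \sum_i j'_i \textbf{x}_i$ with $j_i, j'_i \in \{1,\dots,k\}$ forces $j_i = j'_i$ for all $i$, which gives $|\{\textbf{x}_1,\dots,k\textbf{x}_1\}+\dots+\{\textbf{x}_{d-1},\dots,k\textbf{x}_{d-1}\}| = k^{d-1}$.

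First, I would choose $\textbf{y}_1,\dots,\textbf{y}_{d-1} \in \partial B$ to be linearly independent over $\R$; such a choice exists because $B$ contains $\textbf{0}$ in its interior, so along any nonzero direction $\textbf{v} \in \R^d$ there is a unique positive scalar $t$ with $t\textbf{v} \in \partial B$, and we can simply take any $d-1$ linearly independent directions and rescale. Second, since linear independence of a $(d-1)$-tuple of vectors in $\R^d$ is an open condition (its complement is the common vanishing set of the $(d-1)\times(d-1)$ minors), there exists $\eps > 0$ small enough that any $(\textbf{x}_1,\dots,\textbf{x}_{d-1}) \in (\R^d)^{d-1}$ with $\|\textbf{x}_i - \textbf{y}_i\|_2 \le \eps$ for all $i$ remains linearly independent.

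Third, I would apply \Cref{c22} with these $\textbf{y}_1,\dots,\textbf{y}_{d-1}$ and this $\eps$ to obtain points $\textbf{x}_1,\dots,\textbf{x}_{d-1}$ with $\|\textbf{x}_i - \textbf{y}_i\|_2 \le \eps$ such that $S := \bigcap_{i=1}^{d-1}(\textbf{x}_i + \partial B)$ is infinite. By the choice of $\eps$ these $\textbf{x}_i$ are linearly independent, so the Minkowski sum has cardinality $k^{d-1}$ as explained. Every $\textbf{z} \in S$ satisfies $\textbf{z} - \textbf{x}_i \in \partial B$ and hence $\|\textbf{z} - \textbf{x}_i\| = 1$ for each $i$, completing the proof. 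There is no substantive obstacle; all the real work is already encapsulated in \Cref{c22}, and the only thing to add is the openness argument for linear independence.
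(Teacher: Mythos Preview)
Your proof is correct and follows essentially the same route as the paper: both apply \Cref{c22} to well-chosen $\textbf{y}_1,\dots,\textbf{y}_{d-1}\in\partial B$ with a sufficiently small $\eps$, and take $S=\bigcap_{i=1}^{d-1}(\textbf{x}_i+\partial B)$. The only difference is cosmetic: you secure the Minkowski-sum condition by choosing the $\textbf{y}_i$ linearly independent and invoking openness of that condition, whereas the paper chooses the $\textbf{y}_i$ merely so that the $k^{d-1}$ sums are distinct and then uses a direct distance argument (each sum for the $\textbf{x}_i$ lies within $k(d-1)\eps$ of the corresponding sum for the $\textbf{y}_i$); your version is slightly cleaner, the paper's is more explicit about the role of $k$.
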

\begin{proof}
Let $\partial B$ be the boundary of the unit ball of the norm $\|.\|$. Let us choose points $\textbf{y}_1, \ldots ,\textbf{y}_{d-1}\in \partial B$ such that $|\{\textbf{y}_1,2\textbf{y}_1,\dots,k\textbf{y}_1\}+\dots+\{\textbf{y}_{d-1},2\textbf{y}_{d-1},\dots,k\textbf{y}_{d-1}\}|=k^{d-1}$ (we can choose the points $\textbf{y}_1,\dots,\textbf{y}_{d-1}$ one at a time, maintaining the condition $|\{\textbf{y}_1,2\textbf{y}_1,\dots,k\textbf{y}_1\}+\dots+\{\textbf{y}_{i},2\textbf{y}_i,\dots,k\textbf{y}_{i}\}|=k^{i}$ for $i=1,\dots,d-1$, by observing that after choosing $\textbf{y}_1,\dots,\textbf{y}_{i-1}$ each potential equality between two different sums of the form $a_1\textbf{y}_1+\ldots+a_{i}\textbf{y}_{i}$ with $a_1,\dots,a_{i}\in \{1,\dots,k\}$ forbids only a single choice for $\textbf{y}_i$). Let $\eps>0$ be sufficiently small, such that any two distinct points in $\{\textbf{y}_1,2\textbf{y}_1,\dots,k\textbf{y}_1\}+\dots+\{\textbf{y}_{d-1},2\textbf{y}_{d-1},\dots,k\textbf{y}_{d-1}\}$ have Euclidean distance larger than $2k(d-1) \eps$. Now, by applying \Cref{c22} with $\textbf{y}_1, \ldots ,\textbf{y}_{d-1}$ and $\eps$, we obtain points $\textbf{x}_1,\textbf{x}_2 \ldots ,\textbf{x}_{d-1}$ with $\|\textbf{x}_i-\textbf{y}_i\|_2 \leq \eps$ for $i=1,\dots,d-1$, such that the intersection $\bigcap_{i=1}^{d-1} (\textbf{x}_i+\partial B)$ is infinite. Now, each point in $\{\textbf{x}_1,2\textbf{x}_1,\dots,k\textbf{x}_1\}+\dots+\{\textbf{x}_{d-1},2\textbf{x}_{d-1},\dots,k\textbf{x}_{d-1}\}$ is of the form $a_1\textbf{x}_1+\ldots+a_{d-1}\textbf{x}_{d-1}$ with $a_1,\dots,a_{d-1}\in \{1,\dots,k\}$ and has distance at most $k(d-1)\eps$ from the corresponding point $a_1\textbf{y}_1+\ldots+a_{d-1}\textbf{y}_{d-1}$. By our choice of $\eps$, this means that the points $a_1\textbf{x}_1+\ldots+a_{d-1}\textbf{x}_{d-1}$ with $a_1,\dots,a_{d-1}\in \{1,\dots,k\}$ must all be distinct and hence $|\{\textbf{x}_1,2\textbf{x}_1,\dots,k\textbf{x}_1\}+\dots+\{\textbf{x}_{d-1},2\textbf{x}_{d-1},\dots,k\textbf{x}_{d-1}\}|=k^{d-1}$. Furthermore, defining $S=\bigcap_{i=1}^{d-1} (\textbf{x}_i+\partial B)$, the set $S$ is infinite and every point $\textbf{z}\in S$ satisfies $\|\textbf{z}-\textbf{x}_i\|=1$ for $i=1,\dots,d-1$.
\end{proof}

We are now ready to prove \Cref{t12}.
\begin{proof}[ of \Cref{t12}]
Throughout the argument, we treat $d$ as a fixed constant and all the asymptotics are with respect to large $n$.  Let us choose $m:=\ceil{\log \left(\frac{n}{\log n}\right)}=o(n)$ and $k= \floor{(n/2^m)^{1/(d-1)}}=\Theta((\log n)^{1/(d-1)})$, so that when writing $n'=k^{d-1}\cdot 2^m$, we have $n \ge n'\ge k^{d-1}/(k+1)^{d-1}\cdot n=(1-o(1))n$. Let $\|.\|$ be a $d$-norm.

By Lemma \ref{l23} there exist points $\textbf{x}_1, \ldots ,\textbf{x}_{d-1}\in\R^d$ with $|\{\textbf{x}_1,2\textbf{x}_1,\dots,k\textbf{x}_1\}+\dots+\{\textbf{x}_{d-1},2\textbf{x}_1,\dots,k\textbf{x}_{d-1}\}|=k^{d-1}$ and an infinite subset $S\subseteq \R^d$ such that $\|\textbf{z}-\textbf{x}_i\|=1$ for all $\mathbf{z}\in S$ and $i=1,\dots,d-1$. Now, let us choose points $\textbf{z}_1,\dots ,\textbf{z}_m\in S$ such that the set
\begin{equation}\label{eq:set-lower-bound-proof}
\{\textbf{x}_1,2\textbf{x}_1,\ldots k\textbf{x}_1\}+ \dots +\{\textbf{x}_{d-1},2\textbf{x}_{d-1},\ldots, k\textbf{x}_{d-1}\}
+ \{0,\textbf{z}_1\}+\dots +\{0,\textbf{z}_{m}\}
\end{equation}
has size $k^{d-1}\cdot 2^{m}=n'$ (this is possible by once again choosing the points $\textbf{z}_1,\dots ,\textbf{z}_m\in S$ one by one, observing that at each step every potential equality between two different sums of the form $a_1\textbf{x}_1+\ldots+a_{d-1}\textbf{x}_{d-1}+b_1\textbf{z}_1+\ldots+b_j\textbf{z}_j$ with $a_1,\dots,a_{d-1}\in \{1,\dots,k\}$ and $b_1,\dots,b_j\in \{0,1\}$ forbids at most one choice for $\mathbf{z}_j$).

Now, note that every vector of the form $\textbf{z}_j-\textbf{x}_i$ with $i\in \{1,\dots,d-1\}$ and $j\in \{1,\dots,m\}$ occurrs as the difference of at least $(k-1)\cdot k^{d-2} \cdot 2^{m-1}$ pairs of points in the set in (\ref{eq:set-lower-bound-proof}). Since $\|\textbf{z}_j-\textbf{x}_i\|=1$ for all $i\in \{1,\dots,d-1\}$ and $j\in \{1,\dots,m\}$, this means that among the $n'$ points in the set in (\ref{eq:set-lower-bound-proof}), there are at least 
\[(d-1)m \cdot (k-1)k^{d-2} \cdot 2^{m-1}=\frac{d-1}{2}\cdot \left(1-\frac 1k\right)\cdot (\log n'-(d-1)\log k) \cdot n' = \frac{d-1-o(1)}{2}\cdot n' \log n'\]
unit distances.  Since $n' \log n' =(1-o(1)) n \log n$ and $n' \le n$, this shows there exists a set of at most $n$ points in $\R^d$ with at least $\frac{d-1-o(1)}{2}\cdot n \log n$ unit distances according to the norm $\nn$ (by adding more points we can form such a set with exactly $n$ points).
\end{proof}

\section{Concluding remarks and open problems}
\label{S6}

We have shown in \Cref{t12} that for every $d$-norm $\|.\|$, we have $U_{\|.\|}(n) \geq \frac{1}{2}(d-1-o(1)) \cdot n \log n$. In other words, $\R^d$ contains $n$ points that determine at least $\frac{1}{2} (d-1-o(1))\cdot n \log n$  unit distances according to $\|.\|$, where the $o(1)$-term tends to zero as $n\to \infty$. This is nearly tight, as we have proved in \Cref{thm:main} that there are $d$-norms in which no set of $n$ points determines more than $\frac{1}{2}d \cdot n \log n$ unit distances.  In fact, almost all $d$-norms satisfy this property. For $d=2$ this settles a problem raised by Brass in \cite{Br0} and by Matou\v{s}ek in \cite{matousek}, shaving a $\log \log n$ factor from Matou\v{s}ek's upper bound. For general dimension $d\ge 2$, this provides an essentially tight estimate, up to a $(1-1/d-o(1))$ constant factor, and settles, in a strong and somewhat surprising form, Problems 4 and 5 in \cite[p.\ 195]{BMP-survey}.

For $d=2$ we have shown in \Cref{p11} that $U_{\|.\|}(n) \geq (\frac{1}{\log 3} -o(1))\cdot n \log n$ for every $2$-norm $\|.\|$, improving upon a well-known and often repeated lower bound of $\frac12 \cdot n \log n$ coming from the embedding of the hypercube described in the introduction.  Our upper bound in \Cref{thm:main} for most $2$-norms $\|.\|$ is $ U_{\|.\|}(n) \leq n \log n$, so there is still a gap between the constant factors in the lower and upper bound. In fact, the constant factor $\frac{1}{\log 3}$ in our new lower bound can be slightly improved.  To do so, note that the construction in the proof of \Cref{p11} is a Minkowski sum of sets $S_i = \{0,\textbf{x}_i,\textbf{y}_i\}$. When choosing these sets, the choices of the points $\textbf{x}_i$ are essentially arbitrary, and it is thus possible to choose them so that for every $i \geq 1$, the points $\textbf{x}_{4i-3}+\textbf{x}_{4i-2}$ and $\textbf{x}_{4i-1}+\textbf{x}_{4i}$ have unit distance according to $\|.\|$. Similarly, an additional (tiny) improvement can be obtained by repeating this argument recursively. As this still leaves a gap between the upper and lower bounds, we omit the detailed computation. It may be interesting to close the gap between our upper and lower bounds for all dimensions, although our bounds in \Cref{thm:main,t12} are already quite close (in particular, the bounds get closer as the dimension increases). It seems that this will require some new ideas.  

The proof of Theorem \ref{thm:main} applies the fact, established in Section \ref{S4}, that for most $d$-norms $\|.\|$ there cannot be too many linear dependencies over the rationals between the unit vectors (more precisely, for a given number of unit vectors, their linear span over the rationals cannot contain too many other unit vectors). It is worth noting, however, that some such linear dependencies always exist. In particular, the triangles constructed in the proof of  Lemma \ref{l21} show that in every strictly convex $2$-norm there are infinitely many triples of unit vectors whose sum is $0$. Similarly, a simple application of the Hurewicz Dimension Lowering Theorem (Theorem \ref{t21}) implies that for every $d$-norm $\|.\|$, there are infinitely many $(d-1)$-tuples $(\textbf{x}_1,\dots,\textbf{x}_{d-1})$ of points of unit norm according to $\|.\|$ such that the $(d-2)$-tuple of differences $(\textbf{x}_2-\textbf{x}_1,\dots,\textbf{x}_{d-1}-\textbf{x}_1)$ is the same for all these $(d-1)$-tuples $(\textbf{x}_1,\dots,\textbf{x}_{d-1})$. In particular, this means that for any $\ell\ge d-1$, we can find a set of $\ell$ unit vectors in $\R^d$ whose linear span over the rationals contains at least $(d-1)\cdot (\ell-d+2)$ unit vectors according to $\|.\|$ (in comparison, the arguments in Section \ref{S4} show that for a typical $d$-norm such a span can contain at most $d\cdot \ell$ unit vectors).

Theorem \ref{thm:distinct-distances} provides nearly tight bounds for the largest possible value of $D_{\|.\|}(n)$ for a $d$-norm $\|.\|$ when $n$ is sufficiently large as a function of $d$. 
As far as we know it may be true that an even stronger bound holds, namely that for every fixed $d$, if $n$ is sufficiently large as a function of $d$, then there is a $d$-norm $\|.\|$ so that $D_{\|.\|}(n) =n-1$. This might even hold for most norms. A proof of this, if true, would require some additional arguments. A careful inspection of the computation in our existing proof shows that it implies that for any $d$ and $n$, we have $D_{\|.\|}(n) \geq n-O(dn^{3/4})$ for most $d$-norms $\|.\|$. 
When $n$ is not large as a function of $d$, then the determination of the largest possible value of $D_{\|.\|}(n)$ among all $d$-norms $\|.\|$ is a difficult problem. In particular, the determination of the largest value of $n$ so that $D_{\|.\|}(n)=1$ for all $d$-norms $\|.\|$ is equivalent to a well-known conjecture of Petty, as we describe next.

The number of vertices in the largest clique that can be embedded in the unit distance graph of $\|.\|$ is called the equilateral number of $\|.\|$, which we denote here by $e_{\|.\|}$ (equilateral numbers have also been considered in other types of spaces, see e.g.\ \cite{elsholtz}). A conjecture of Petty raised in 1971 (\cite{Pe}, 
see also \cite{BMP-survey}) asserts that $e_{\|.\|} \geq d+1$ for every $d$-norm $\|.\|$. Note that an equivalent formulation
of this conjecture is that $D_{\|.\|}(n)=1$ for every $d$-norm $\|.\|$ and every $n \leq d+1$ (or equivalently, for $n=d+1$).

This is known in dimension $d \leq 3$ (\cite{Pe}), and is also known for norms that are sufficiently close to the $d$-dimensional Euclidean norm (\cite{Br}, \cite{De}), or more generally to the $d$-dimensional $\ell_p$-norm $\ell_p^d$ for any $1 <p \leq \infty$ (\cite{SV}). Combining this with a result of \cite{AM} asserting that every $d$-dimensional normed space contains a subspace of dimension $r=e^{\Omega(\sqrt {\log d})}$ which is either close to $\ell_2^r$ or to $\ell_{\infty}^r$, it was proved by Swanepoel and Villa \cite{SV} that the equilateral number $e_{\|.\|}$ of any $d$-norm $\|.\|$  is at least $e^{b \sqrt {\log d}}$  for some absolute constant $b>0$. It is interesting to note that the exact value of the equilateral number $e_{\|.\|}$ is not known even for some simple $d$-norms like $\ell_1^d$, where it is conjectured to be $2d$. See \cite{AP}, \cite{Sw0}, \cite{Sw} for more information.

Petty's problem as well as the constructions in \Cref{S5} suggest the investigation of graphs that appear as subgraphs in the unit distance graph of any $d$-norm for some given $d \ge 2$. Any hypercube graph (of any dimension) is an example of such a graph, and a clique of size $e^{b\sqrt {\log d}}$ is another such example (for the absolute constant $b>0$ in the above-mentioned result of Swanepoel and Villa \cite{SV}). The complete bipartite graph $K_{d-1,m}$, for any $m$, is also an example, as proved in \Cref{l23}. Yet another example that appears as a subgraph of the unit distance graph of any strictly convex $2$-norm is the $k$-th power of a triangle (for any $k\ge 1$), as shown in \Cref{l21}. A characterization of all graphs that are subgraphs of the unit distance graph of any $d$-norm appears to be difficult. 

A related problem is to determine the smallest possible chromatic number of the unit distance graph of a $d$-norm. In dimension $d=2$, the answer is $2$ and in fact we showed in \Cref{thm:hadwiger-nelson} that for most $2$-norms the chromatic number is equal to $4$. By the known results about Petty's conjecture, the chromatic number of the unit distance graph of any $d$-norm is always at least $e^{\Omega(\sqrt {\log d})}$. On the other hand, it is not difficult to show that the chromatic number of the unit distance graph of any $d$-norm is at most exponential in $d$ (see e.g.\ \cite{FK}, \cite{Ku}). \Cref{thm:odd-distance-chromatic} implies an upper bound of $2^d$ for most $d$-norms (which gives a better exponential base than the known bounds for all norms). It would be interesting to determine whether the chromatic number of the unit distance graph of a typical $d$-norm is exponential or sub-exponential in $d$. By our results here these graphs are rather sparse, hence one may suspect that the chromatic number might be smaller than exponential.

Let us note that the book of Brass, Moser and Pach \cite{BMP-survey} serves as a remarkable repository of interesting open problems in discrete geometry, many of which have natural extensions to general normed spaces. At least some of the ideas and tools developed in this paper may be helpful in attacking these questions. 

Many of these questions can naturally be posed for typical norms, which in some sense play an analogous role as random graphs do in extremal graph theory. Similarly, as with random graphs, our results showcase that considering typical norms can provide answers to extremal questions for general norms. They also show that the Euclidean norm is very special in many regards, and exhibits a different behaviour with respect to many of these problems than most other norms. In particular, by the known bounds for $D_{\|.\|_2}(n)$ and $U_{\|.\|_2}(n)$ mentioned in the introduction, our results show that the behaviour for the Euclidean $2$-norm with respect to the unit distance problem and the distinct distances problem is very different from the behaviour for a typical $2$-norm, and this difference only becomes more pronounced in higher dimensions. While this might be natural in view of the symmetry of the Euclidean norm, we find it surprising that in comparison, for a typical $d$-norm $\|.\|$, $U_{\|.\|}(n)$ is so small and $D_{\|.\|}(n)$ is so large.

An intriguing open question is to describe explicitly a $d$-norm $\|.\|$ for which $U_{\|.\|}(n)=O(d n \log n)$ or for which $D_{\|.\|}(n) \geq (1-o(1)) n$ (for large $n$). Note that a formal statement of this question requires a definition of the notion ``explicit'' here, a natural one is a norm $\|.\|$ for which there is an efficient algorithm (in any standard model of computation over the reals) for computing the norm $\|\textbf{x}\|$ of any given vector $\textbf{x} \in \R^d$. Note also that it is not even obvious that there exists an explicit $d$-norm as above.

As another application of our methods, we can resolve an analogue of a classical question of Erd\H{o}s and Moser~\cite{erdos-moser} for typical norms. In 1959, Erd\H{o}s and Moser asked about the maximum possible number of unit distances among $n$ points in (strictly) \emph{convex} position in the Euclidean plane, and conjectured that the answer is $O(n)$. F\"uredi~\cite{furedi-convex} proved an upper bound of the form $O(n \log n)$, and the current best bound is still of this form (Aggarwal \cite{aggarwal} improved F\"uredi's bound by a constant factor). The best-known lower bound of $2n-7$ is due to Edelsbrunner and Hajnal \cite{edelsbrunner-hajnal}.

Our arguments show that the conjecture of Erd\H{o}s and Moser is true for most planar norms. More concretely, they imply that for most norms the maximum number of unit distances among $n$ points in a strictly convex position in the plane is at most $4n$. Indeed, the arguments in Section \ref{S4} imply that for most planar norms, for a list of unit vectors $\mathbf{u}_1,\dots,\mathbf{u}_\ell$, the span $\spn_{\mathbb{Q}}(\mathbf{u}_1,\dots,\mathbf{u}_\ell)$ contains a total of at most $2\ell$ unit vectors in different directions (where we consider two unit vectors to have the same direction if they agree up to sign). For any $n$ points in a strictly convex position in the plane, we can consider the graph corresponding to the unit distances between these points and choose a maximal spanning forest in this graph (i.e. we choose a spanning tree for each component of the graph). Let $\mathbf{u}_1,\dots,\mathbf{u}_\ell$ be the unit vectors corresponding to the edges of this spanning forest, and note that $\ell\le n-1$. Now, any unit vector appearing as a distance between two of the $n$ points can be written as a sum of some of the vectors $\mathbf{u}_1,\dots,\mathbf{u}_\ell$ or their negatives (since the two endpoints of the corresponding edge of the graph can be connected by a path in the spanning forest). Hence any unit vector appearing as a distance between the $n$ points must be in $\spn_{\mathbb{Q}}(\mathbf{u}_1,\dots,\mathbf{u}_\ell)$, and so for a typical norm there can be at most $2\ell\le 2n-2$ different directions of such unit vectors. On the other hand, since the $n$ points are in a strictly convex position, for any unit vector $\mathbf{u}$, there can be at most two pairs of points with distance $\pm \mathbf{u}$. Hence, for a typical planar norm, there can be a total of at most $4n-4$ unit distances among $n$ points in a strictly convex position.

Another interesting open problem we suggest is the possible existence of a zero-one law for typical $d$-norms: Is it true that for every fixed $d\ge 2$ and every fixed graph $H$, exactly one of the following two options holds?
\begin{compactitem}
\item For all $d$-norms $\nn$ besides a meagre set, the unit distance graph of $\nn$ contains $H$ as a subgraph.
\item For all $d$-norms $\nn$ besides a meagre set, the unit distance graph of $\nn$ does not contain $H$ as a subgraph.
\end{compactitem}

Finally, we mention that we have another approach for upper-bounding $U_{\|.\|}(n)$ for typical $d$-norms $\|.\|$ yielding a somewhat weaker upper bound than in \Cref{thm:main}, namely of the form $O(d^2n \log n)$. This approach is more similar to Matou\v{s}ek's proof \cite{matousek} showing $U_{\|.\|}(n)\le O(n\log n\log \log n)$ for most $2$-norms $\|.\|$. In particular, by a probabilistic argument using a careful multiple exposure process, we manage to improve the graph-theoretic statement in  \cite[Proposition 2.1]{matousek}, removing the $\log \log n$ factor in this proposition (which causes the $\log \log n$ factor in Matou\v{s}ek's result). 
While the resulting bound $O(d^2n \log n)$ is weaker (in terms of the $d$-dependence) than the bound in \Cref{thm:main}, we plan to write the proof of this improved graph theoretic lemma in a companion note to this paper, since we believe that the argument might be useful in other settings. In particular, a somewhat stronger variant of this lemma would yield an asymptotic answer to the so-called discrete X-Ray reconstruction problem, see \cite{X-ray} for more details.

\textbf{Remark.} While our paper was under review, Greilhuber, Schildkraut, and Tidor \cite{new-lower-bounds} found constructions improving the lower bound for the function $U_{||.||}(n)$ from \Cref{t12} to $(d/2-o(1))\cdot n\log_2 n$. This shows that our \Cref{thm:main} is asymptotically tight.

\textbf{Acknowledgments.} We thank Karim Adiprasito, Pankaj Agarwal, Xiaoyu He, Bo\textquotesingle az Klartag, Mehtaab Sawhney, J\'ozsef Solymosi, Terence Tao and Or Zamir for useful conversations, and J\'anos Koll\'ar and Assaf Naor for leading us to reference \cite{Eng}. The second author gratefully acknowledges the support of the Institute fo Advanced Study in Princeton and the Oswald Veblen fund. We are also very grateful to the anonymous referees for carefully reading the paper and for their helpful comments and suggestions.

\providecommand{\MR}[1]{}
\providecommand{\MRhref}[2]{%
\href{http://www.ams.org/mathscinet-getitem?mr=#1}{#2}}

\providecommand{\href}[2]{#2}


\begin{thebibliography}{10}


\bibitem{aggarwal} 
A.~Aggarwal, \emph{On unit distances in a convex polygon}, Discrete Math. \textbf{338}, (2015), no.~3, 88--92.

\bibitem{AM}
N.~Alon and V.~D. Milman, \emph{Embedding of {$\ell^{k}_{\infty }$} in
  finite-dimensional {B}anach spaces}, Israel J. Math. \textbf{45} (1983),
  no.~4, 265--280. \MR{720303}

\bibitem{AP}
N.~Alon and P.~Pudl\'{a}k, \emph{Equilateral sets in {$l^n_p$}}, Geom. Funct.
  Anal. \textbf{13} (2003), no.~3, 467--482. \MR{1995795}

\bibitem{polytope-approx}
A.~Barvinok, \emph{Thrifty approximations of convex bodies by polytopes}, Int.
  Math. Res. Not. IMRN (2014), no.~16, 4341--4356. \MR{3250035}

\bibitem{BL}
B.~Bollob\'{a}s and I.~Leader, \emph{Edge-isoperimetric inequalities in the
  grid}, Combinatorica \textbf{11} (1991), no.~4, 299--314. \MR{1137765}

\bibitem{Br0}
P.~Brass, \emph{Erd{\H{o}}s distance problems in normed spaces}, Comput. Geom.
  \textbf{6} (1996), no.~4, 195--214. \MR{1392310}

\bibitem{Br}
P.~Brass, \emph{On equilateral simplices in normed spaces}, Beitr\"{a}ge
  Algebra Geom. \textbf{40} (1999), no.~2, 303--307. \MR{1720106}

\bibitem{BMP-survey}
P.~Brass, W.~Moser, and J.~Pach, \emph{Research problems in discrete geometry},
  Springer, New York, 2005. \MR{2163782}

\bibitem{Chilakamarri}
K.~B. Chilakamarri, \emph{Unit-distance graphs in {M}inkowski metric spaces},
  Geom. Dedicata \textbf{37} (1991), no.~3, 345--356. \MR{1094697}

\bibitem{Da}
J.~Davies, \emph{Odd distances in colourings of the plane}, Geom. Funct. Anal. \textbf{34} (2024), 19--31.

\bibitem{de-Grey}
A.~D. N.~J. de~Grey, \emph{The chromatic number of the plane is at least 5},
  Geombinatorics \textbf{28} (2018), no.~1, 18--31. \MR{3820926}

\bibitem{De}
B.~V. Dekster, \emph{Simplexes with prescribed edge lengths in {M}inkowski and
  {B}anach spaces}, Acta Math. Hungar. \textbf{86} (2000), no.~4, 343--358.
  \MR{1756257}

\bibitem{edelsbrunner-hajnal}
H.~Edelsbrunner and P.~Hajnal, \emph{A lower bound on the number of unit distances between the vertices of a convex polygon}, J. Combin. Theory Ser. A \textbf{56} (1991), no.~2, 312--316.

\bibitem{edmonds}
J.~Edmonds, \emph{Minimum partition of a matroid into independent subsets}, J.
  Res. Nat. Bur. Standards Sect. B \textbf{69B} (1965), 67--72. \MR{190025}

\bibitem{EF}
J.~Edmonds and D.~R. Fulkerson, \emph{Transversals and matroid partition}, J.
  Res. Nat. Bur. Standards Sect. B \textbf{69B} (1965), 147--153. \MR{188090}

\bibitem{elsholtz}
C.~Elsholtz and W.~Klotz, \emph{Maximal dimension of unit simplices}, Discrete
  Comput. Geom. \textbf{34} (2005), no.~1, 167--177. \MR{2140889}

\bibitem{Eng}
R.~Engelking, \emph{Theory of dimensions finite and infinite}, Sigma Series in
  Pure Mathematics, vol.~10, Heldermann Verlag, Lemgo, 1995. \MR{1363947}

\bibitem{Er1}
P.~Erd\H{o}s, \emph{On some applications of graph theory to geometry}, Canadian
  J. Math. \textbf{19} (1967), 968--971. \MR{219438}

\bibitem{erdos-problem}
P.~Erd\H{o}s, \emph{Problems and results in combinatorial geometry}, Discrete
  geometry and convexity ({N}ew {Y}ork, 1982), Ann. New York Acad. Sci., vol.
  440, New York Acad. Sci., New York, 1985, pp.~1--11. \MR{809186}

\bibitem{Erdos-unit-distance}
P.~Erd\H{o}s, \emph{On sets of distances of {$n$} points}, Amer. Math. Monthly \textbf{53} (1946), 248--250. \MR{15796}

\bibitem{erdos-moser}
P.~Erd\H{o}s and L.~Moser, \emph{Problem 11}, Canad. Math. Bull. \textbf{2} (1959), 43.


\bibitem{Exoo-Ismailescu}
G.~Exoo and D.~Ismailescu, \emph{The chromatic number of the plane is at least
  5: a new proof}, Discrete Comput. Geom. \textbf{64} (2020), no.~1, 216--226.
  \MR{4110534}

\bibitem{FW}
P.~Frankl and R.~M. Wilson, \emph{Intersection theorems with geometric
  consequences}, Combinatorica \textbf{1} (1981), no.~4, 357--368. \MR{647986}

\bibitem{furedi-convex}
Z.~F\"{u}redi, \emph{The maximum number of unit distances in a convex $n$-gon}, J. Combin. Theory Ser. A \textbf{55} (1990), 316--320.

\bibitem{FK}
Z.~F\"{u}redi and J.-H. Kang, \emph{Covering the {$n$}-space by convex bodies
  and its chromatic number}, Discrete Math. \textbf{308} (2008), no.~19,
  4495--4500. \MR{2433777}

\bibitem{distinct_distances_book}
J.~Garibaldi, A.~Iosevich, and S.~Senger, \emph{The {E}rd{\H{o}}s distance
  problem}, Student Mathematical Library, vol.~56, American Mathematical
  Society, Providence, RI, 2011. \MR{2721878}

\bibitem{new-lower-bounds}
  J.~Greilhuber, C.~Schildkraut, and J.~Tidor, \emph{More unit distances in arbitrary norms},
  preprint arXiv:2410.07557 (2024).

\bibitem{gruber}
P.~M. Gruber, \emph{Convex and discrete geometry}, Grundlehren der
  mathematischen Wissenschaften [Fundamental Principles of Mathematical
  Sciences], vol. 336, Springer, Berlin, 2007. \MR{2335496}

\bibitem{GK}
L.~Guth and N.~H. Katz, \emph{On the {E}rd{\H{o}}s distinct distances problem
  in the plane}, Ann. of Math. (2) \textbf{181} (2015), no.~1, 155--190.
  \MR{3272924}

\bibitem{gil_matousek}
G.~Kalai, \emph{Some old and new problems in combinatorial geometry {I}: around
  {B}orsuk's problem}, Surveys in combinatorics, London Math. Soc. Lecture Note
  Ser., vol. 424, Cambridge Univ. Press, 2015, pp.~147--174. \MR{3497269}

\bibitem{klee}
V.~Klee, \emph{Some new results on smoothness and rotundity in normed linear
  spaces}, Math. Ann. \textbf{139} (1959), 51--63 (1959). \MR{115076}

\bibitem{Ku}
A.~Kupavskiy, \emph{On the chromatic number of {$\Bbb R^n$} with an arbitrary
  norm}, Discrete Math. \textbf{311} (2011), no.~6, 437--440. \MR{2799896}

\bibitem{matouvsek2011dawn}
J.~Matou{\v{s}}ek, \emph{The dawn of an algebraic era in discrete geometry?},
  Proceedings of the 27th European Workshop on Computational Geometry, 2011.

\bibitem{matousek}
J.~Matou\v{s}ek, \emph{The number of unit distances is almost linear for most
  norms}, Adv. Math. \textbf{226} (2011), no.~3, 2618--2628. \MR{2739786}

\bibitem{X-ray}
J.~Matou\v{s}ek, A.~P\v{r}\'{\i}v\v{e}tiv\'{y}, and P.~\v{S}kovro\v{n},
  \emph{How many points can be reconstructed from {$k$} projections?}, SIAM J.
  Discrete Math. \textbf{22} (2008), no.~4, 1605--1623. \MR{2448912}

\bibitem{Pe}
C.~M.~Petty, \emph{Equilateral sets in {M}inkowski spaces}, Proc. Amer. Math.
  Soc. \textbf{29} (1971), 369--374. \MR{275294}

  
\bibitem{polymath_project}
D.~H.~J.~Polymath, \emph{Polymath16}, \url{web.archive.org/web/20220216001534/https://asone.ai/polymath/index.php?title=Hadwiger-Nelson_problem}.

\bibitem{sheffer_matousek}
A.~Sheffer, \emph{Polynomial methods and incidence theory}, Cambridge Studies
  in Advanced Mathematics, vol. 197, Cambridge University Press, Cambridge,
  2022. \MR{4394303}

\bibitem{solymosi2022arrangements}
J.~Solymosi and E.~Szab{\'o}, \emph{Arrangements of translates of a curve},
  preprint arXiv:2208.05525 (2022).

\bibitem{SV2}
J.~Solymosi and V.~H. Vu, \emph{Near optimal bounds for the {E}rd{\H{o}}s
  distinct distances problem in high dimensions}, Combinatorica \textbf{28}
  (2008), no.~1, 113--125. \MR{2399013}

\bibitem{SST-ub}
J.~Spencer, E.~Szemer\'{e}di, and W.~Trotter, Jr., \emph{Unit distances in the
  {E}uclidean plane}, Graph {T}heory and {C}ombinatorics ({C}ambridge, 1983),
  Academic Press, London, 1984, pp.~293--303. \MR{777185}

\bibitem{Sw0}
K.~J. Swanepoel, \emph{A problem of {K}usner on equilateral sets}, Arch. Math.
  (Basel) \textbf{83} (2004), no.~2, 164--170. \MR{2104945}

\bibitem{Sw}
K.~J. Swanepoel, \emph{Combinatorial distance geometry in normed spaces}, New
  trends in intuitive geometry, Bolyai Soc. Math. Stud., vol.~27, J\'{a}nos
  Bolyai Math. Soc., Budapest, 2018, pp.~407--458. \MR{3889270}

\bibitem{SV}
K.~J. Swanepoel and R.~Villa, \emph{A lower bound for the equilateral number of
  normed spaces}, Proc. Amer. Math. Soc. \textbf{136} (2008), no.~1, 127--131.
  \MR{2350397}

\bibitem{szekely-simple-proof}
L.~A. Sz\'{e}kely, \emph{Crossing numbers and hard {E}rd{\H{o}}s problems in
  discrete geometry}, Combin. Probab. Comput. \textbf{6} (1997), no.~3,
  353--358. \MR{1464571}

\bibitem{szemeredi-survey}
E.~Szemer\'{e}di, \emph{Erd{\H{o}}s' unit distance problem}, Open problems in
  mathematics, Springer, 2016, pp.~459--477. \MR{3526946}

\bibitem{ungar}
P.~Ungar, \emph{{$2N$} noncollinear points determine at least {$2N$}
  directions}, J. Combin. Theory Ser. A \textbf{33} (1982), no.~3, 343--347.
  \MR{676751}

\bibitem{Va}
P.~Valtr, \emph{Strictly convex norms allowing many unit distances and related
  touching questions}, Manuscript (2005).

\end{thebibliography}

\end{document}